\tikzstyle{CCC}=[shape=circle, draw, fill=black!10, align=center, font=\scriptsize]
\tikzstyle{CC}=[shape=circle, draw, align=center, font=\scriptsize]
\theoremstyle{plain}
\newtheorem{lemma}{Lemma}[section]
\newtheorem{prop}[lemma]{Proposition}
\newtheorem{theo}[lemma]{Theorem}
\newtheorem{coro}[lemma]{Corollary}
\theoremstyle{remark}
\newtheorem{rem}[lemma]{Remark}
\theoremstyle{definition}
\newtheorem{definition}[lemma]{Definition}
\newtheorem{ex}[lemma]{Example}
\newcommand{\N}{\mathbb{N}}
\newcommand{\C}{\mathscr{C}}
\newcommand{\F}{\mathscr{F}}
\newcommand{\U}{\mathscr{U}}
\newcommand{\op}{\textup{op}}
\newcommand{\ub}[1]{#1^\sharp}
\newcommand{\lb}[1]{#1^\flat}
\newcommand{\CU}{{\bf C}_\U}
\newcommand{\CF}{{\bf C}_\F}
\newcommand{\id}{\textup{id}}
\newcommand{\inc}{\textup{inc}}
\newcommand{\hocolim}{\int}
\newcommand{\modu}[1]{\ \ \textnormal{mod}\ #1}
\begin{document}

\title{A new tool to study the fixed point property of finite posets}

\author{Ana Gargantini}
\address{Facultad de Ciencias Exactas y Naturales \\ Universidad Nacional de Cuyo \\ Mendoza, Argentina.}
\email{anagargantini@gmail.com}

\author{Miguel Ottina}
\address{Facultad de Ciencias Exactas y Naturales \\ Universidad Nacional de Cuyo \\ Mendoza, Argentina.}
\email{miguelottina@gmail.com}

\subjclass[2010]{Primary: 54H25, 55M20. Secondary: 06A99.}


\keywords{Fixed point property; Finite topological space; Finite poset.}

\thanks{Research partially supported by grant M044 (2016--2018) of SeCTyP, UNCuyo.}

\begin{abstract}
We develop a novel tool to study the fixed point property of finite posets using a topological approach. Our tool is a construction which turns out to induce an endofunctor of the homotopy category of finite $T_0$--spaces. We study many properties of this construction and give several examples of application.
\end{abstract}

\maketitle

\section{Introduction}

In this article we develop a novel tool to study the fixed point property of finite $T_0$--spaces (or equivalently finite posets) which consists in analysing a new finite $T_0$--space which is constructed from the one whose fixed point property we want to study. More specifically, given a finite $T_0$--space $X$ we construct a new finite $T_0$--space $\C(X)$ whose elements are connected subspaces of $X$ and such that each continuous map $f\colon X\to X$ induces a continuous map $\C(f)\colon \C(X)\to \C(X)$. This assignment is not functorial but defines an endofunctor of the homotopy category of finite $T_0$--spaces.

We show with several examples that this tool is useful to study the fixed point property of finite $T_0$--spaces and we develop many properties of this construction so as to gain a better understanding of it. Among them, we prove that our construction preserves homotopy type and we study how it changes if we remove beat points of the given space.

\section{Preliminaries}

\subsection*{Finite topological spaces}

Let $X$ be a finite topological space. For each $x\in X$ the \emph{minimal open set} that contains $x$ is defined as the intersection of all the open subsets of $X$ that contain $x$ and is denoted by $U_x$ (or by $U_x^X$ if we need to make explicit the topological space in which it is considered). We denote by $F_x$ the intersection of all closed sets that contain $x$. We also denote $\widehat U_x = U_x-\{x\}$ and $\widehat F_x = F_x-\{x\}$. The set $\{U_x\mid x\in X\}$ is a basis for the topology of $X$. We can define a preorder in $X$ by $x_1\leq x_2$ if and only if $U_{x_1}\subseteq U_{x_2}$. Moreover, this defines a bijective correspondence between topologies and preorders in a given finite set, under which $T_0$--topologies correspond to partial orders \cite{alexandroff1937diskrete}. This correspondence allows us to consider a finite $T_0$--space as a finite poset, and a finite poset as a finite $T_0$--space. In addition, a map between finite $T_0$--spaces is continuous if and only if it is order-preserving.

Let $X$ be a finite topological space with associated preorder $\leq$. We define $X^{\op}$ as the finite space with underlying set $X$ and associated preorder given by $x\leq^{\op}y$ if and only if $y\leq x$ for all $x,y\in X$.

Let $X$ and $Y$ be finite $T_0$--spaces and let $f,g\colon X\to Y$ be continuous maps. Then $f$ and $g$ are homotopic (denoted by $f \simeq g$) if and only if there exist $n\in \N$ and continuous maps $f_0, f_1, \ldots, f_n\colon X \to Y$ such that $f = f_0\leq f_1\geq \ldots \leq f_n = g$ \cite[Corollary 1.2.6]{barmak2011algebraic}.

As a consequence, if $f$ and $g$ are such that for each $x\in X$, $f(x)$ and $g(x)$ are comparable, then 
$f\simeq g$ (since both $f$ and $g$ are comparable with the continuous map $h=\max\{f,g\}$).

Let $X$ be a finite $T_0$--space and let $x\in X$. We say that $x$ is a \emph{down beat point} of $X$ if $\widehat U_x$ has a maximum element. We say that $x$ is an \emph{up beat point} of $X$ is $\widehat F_x$ has a minimum element. If $x$ is a down beat point or an up beat point, we say that $x$ is a \emph{beat point} of $X$. It is well known that if $X$ is a finite $T_0$--space and $x_0\in X$ is a beat point, then $X-\{x_0\}$ is a strong deformation retract of $X$ \cite[Proposition 1.3.4]{barmak2011algebraic}.

\subsection*{bp--retracts}

We recall the following definition from \cite{cianci2019combinatorial}.

\begin{definition}[{\cite[Definition 3.1]{cianci2019combinatorial}}]
Let $X$ be a finite $T_0$--space and let $A\subseteq X$. We say that $A$ is a \emph{dbp--retract} (resp.\ \emph{ubp--retract}) of $X$ if $A$ can be obtained from $X$ by successively removing down beat points (resp.\ up beat points), that is, if there exist $n\in\N_0$ and a sequence $X=X_0\supseteq X_1\supseteq\cdots\supseteq X_n=A$ of subspaces of $X$ such that, for all $i\in\{1,\ldots,n\}$, the space $X_i$ is obtained from $X_{i-1}$ by removing a single down beat point (resp.\ up beat point) of $X_{i-1}$.
\end{definition}

Clearly, if $X$ is a finite $T_0$--space and $A$ is a dbp--retract of $X$ then the minimal elements of $X$ belong to $A$ since they can not be down beat points of any subspace of $X$. Similarly, if $X$ is a finite $T_0$--space and $A$ is an ubp--retract of $X$ then the maximal elements of $X$ belong to $A$.

Observe also that if $X$ is a finite $T_0$--space and $A\subseteq X$ then $A$ is an ubp--retract of $X$ if and only if $A^\op$ is a dbp--retract of $X^\op$.

The following results of \cite{cianci2019combinatorial} give characterizations of dbp--retracts which will be needed in this work. Clearly, similar results hold for ubp--retracts.

\begin{theo}[{\cite[Theorem 3.5]{cianci2019combinatorial}}]
\label{theo_dbpr_equivalences}
Let $X$ be a finite $T_0$--space, let $A$ be a subspace of $X$ and let $i\colon A\to X$ be the inclusion map. Then, the following propositions are equivalent:
\begin{enumerate}[(1)]
\item $A$ is a dbp--retract of $X$.
\item There exists a continuous function $f\colon X\to X$ such that $f\leq\id_X$, $f^{2}=f$ and $f(X)=A$.
\item There exists a unique continuous function $f\colon X\to X$ such that $f\leq\id_X$, $f^{2}=f$ and $f(X)=A$.
\item There exists a retraction $r\colon X\to A$ of $i$ such that $ir\leq\id_X$.
\item There exists a unique retraction $r\colon X\to A$ of $i$ such that $ir\leq\id_X$.
\end{enumerate}
\end{theo}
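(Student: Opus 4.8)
The plan is to treat condition~(2) as central: conditions~(4) and~(5) merely repackage the idempotent $f$ as a retraction, while~(3) and~(5) add only a uniqueness clause to~(2) and~(4). So I would first observe that an idempotent $f\colon X\to X$ with $f\le\id_X$ and $f(X)=A$ and a retraction $r\colon X\to A$ of $i$ with $ir\le\id_X$ are the same datum: given $f$, its corestriction $r$ to $A$ is a retraction of $i$, since $f^2=f$ together with $f(X)=A$ forces $ri=\id_A$, and $ir=f\le\id_X$; conversely $f:=ir$ is idempotent, has image $A$, and lies below $\id_X$. This correspondence is bijective, hence transports existence and uniqueness, giving $(2)\Leftrightarrow(4)$ and $(3)\Leftrightarrow(5)$ at once. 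The implication $(3)\Rightarrow(2)$ is trivial, and for $(2)\Rightarrow(3)$ I would argue that if $f,g$ both satisfy~(2) then for each $x\in X$ the element $f(x)$ lies in $A$ and is therefore fixed by $g$; combining $g(f(x))=f(x)$ with $f(x)\le x$ and monotonicity of $g$ gives $f(x)\le g(x)$, and by symmetry $g(x)\le f(x)$, so $f=g$.

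For $(1)\Rightarrow(2)$ I would start from a chain $X=X_0\supseteq X_1\supseteq\cdots\supseteq X_n=A$ with $X_i=X_{i-1}-\{x_i\}$ for some down beat point $x_i$ of $X_{i-1}$. Writing $y_i=\max\widehat U_{x_i}^{X_{i-1}}$, the map $r_i\colon X_{i-1}\to X_i$ given by $r_i(x_i)=y_i$ and $r_i=\id$ elsewhere is order-preserving and satisfies $r_i\le\id$. Then $f:=\inc\circ r_n\circ\cdots\circ r_1\colon X\to X$ is $\le\id_X$ (a composite of order-preserving maps below the identity stays below the identity), it restricts to $\id_A$ because each $r_i$ fixes $X_i\supseteq A$, hence $f^2=f$, and $f(X)=A$ by construction.

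The main obstacle is $(2)\Rightarrow(1)$, which I would prove by induction on $|X|-|A|$, the case $X=A$ being vacuous. The key observation is that $X-A=\{x\in X:f(x)<x\}$, because $f(x)=x$ precisely when $x\in f(X)=A$. Pick $x_0$ minimal in $X-A$ for the order of $X$; then every $y<x_0$ lies in $A$, hence is fixed by $f$, so $y=f(y)\le f(x_0)$. Since $f(x_0)<x_0$, this shows that $\widehat U_{x_0}$ has maximum $f(x_0)$, i.e.\ $x_0$ is a down beat point of $X$ with $x_0\notin A$. Setting $X_1=X-\{x_0\}$, the inclusion $A\subseteq X_1$ yields $f(X_1)\subseteq f(X)=A=f(A)\subseteq f(X_1)$, so $f$ restricts to a self-map of $X_1$ that witnesses~(2) for the pair $(X_1,A)$; by the inductive hypothesis $A$ is a dbp--retract of $X_1$, and since $X_1$ is obtained from $X$ by deleting the down beat point $x_0$, $A$ is a dbp--retract of $X$. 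The delicate point here is precisely the verification that the chosen $x_0$ is a beat point of the full space $X$ and that the restricted map retains all three required properties, both of which rest on the minimality of $x_0$ and on $A$ being contained in each $X_i$.
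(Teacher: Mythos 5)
This statement is imported verbatim from \cite{cianci2019combinatorial} and the paper offers no proof of it, so there is nothing in the source to compare your argument against; I can only assess it on its own terms. Your proof is correct and complete: the dictionary between idempotents $f\leq\id_X$ with image $A$ and retractions $r$ with $ir\leq\id_X$ is a genuine bijection, so it transports both existence and uniqueness, giving $(2)\Leftrightarrow(4)$ and $(3)\Leftrightarrow(5)$; the uniqueness argument for $(2)\Rightarrow(3)$ is sound because every point of $A=g(X)$ is fixed by $g$ when $g^2=g$; the construction of $f$ from a chain of down beat point removals in $(1)\Rightarrow(2)$ is the standard composite of one-point retractions; and in $(2)\Rightarrow(1)$ the minimality of $x_0$ in $X-A$ correctly forces $\widehat U_{x_0}$ to have maximum $f(x_0)$ (nonempty since $f(x_0)<x_0$), while $f(X_1)=A$ ensures the restriction still witnesses $(2)$ for $(X_1,A)$, closing the induction. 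This matches the kind of argument one would expect in the cited reference, and I see no gaps.
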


\begin{theo}[{\cite[Theorem 3.7]{cianci2019combinatorial}}]
\label{theo_Ux_cap_A}
Let $X$ be a finite $T_0$--space and let $A$ be a subspace of $X$. Then, $A$ is a dbp--retract of $X$ if and only if $U^{X}_x\cap A$ has a maximum for every $x\in X$. Equivalently, $A$ is a dbp--retract of $X$ if and only if $U^{X}_x\cap A$ has a maximum for every $x\in X-A$.
\end{theo}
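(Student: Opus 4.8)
The plan is to derive this from Theorem~\ref{theo_dbpr_equivalences}, using the characterization of dbp--retracts as images of idempotent continuous maps $f\colon X\to X$ with $f\leq\id_X$.

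I would first dispose of the ``Equivalently'' clause, which is immediate: if $x\in A$ then $x\in U^X_x\cap A$, and since $U^X_x=\{z\in X:z\leq x\}$ the element $x$ is automatically the maximum of $U^X_x\cap A$. Hence the condition that $U^X_x\cap A$ has a maximum for every $x\in X$ is equivalent to the same condition quantified only over $x\in X-A$.

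For the implication ``dbp--retract $\Rightarrow$ condition'', I would take $f\colon X\to X$ as in item (2) of Theorem~\ref{theo_dbpr_equivalences} and show that $f(x)=\max(U^X_x\cap A)$ for every $x\in X$. Since $f(x)\leq x$ we get $f(x)\in U^X_x$, and $f(x)\in f(X)=A$, so $f(x)\in U^X_x\cap A$; and if $a\in U^X_x\cap A$ then $a\leq x$ implies $f(a)\leq f(x)$ by continuity, while $a\in A=f(X)$ together with $f^2=f$ forces $f(a)=a$, so $a\leq f(x)$. Thus $f(x)$ is the maximum of $U^X_x\cap A$.

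For the converse I would define $f\colon X\to X$ by $f(x)=\max(U^X_x\cap A)$ — legitimate precisely because the hypothesis includes that each set $U^X_x\cap A$ is nonempty — and verify the hypotheses of item (2) of Theorem~\ref{theo_dbpr_equivalences}: $f(x)\in U^X_x$ gives $f\leq\id_X$; if $x\leq y$ then $U^X_x\cap A\subseteq U^X_y\cap A$, so $f(x)\in U^X_y\cap A$ and hence $f(x)\leq f(y)$, giving continuity; and for $a\in A$ one has $a\in U^X_a\cap A$ with $a$ dominating every element of $U^X_a$, so $f(a)=a$, which yields both $f^2=f$ and $f(X)=A$. Then Theorem~\ref{theo_dbpr_equivalences} gives that $A$ is a dbp--retract of $X$. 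I do not expect a genuine obstacle here: granted Theorem~\ref{theo_dbpr_equivalences}, everything reduces to a short verification. The one point worth stating carefully is that ``$U^X_x\cap A$ has a maximum'' is being read to include nonemptiness of $U^X_x\cap A$; this is what makes the candidate map $f$ everywhere defined (and forces, for instance, all minimal elements of $X$ to lie in $A$).
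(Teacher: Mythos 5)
Your proposal is correct; note that the paper itself states this theorem as a quoted result from the reference and gives no proof, but your derivation from items (1) and (2) of Theorem~\ref{theo_dbpr_equivalences} is exactly the intended argument, as confirmed by the paper's remark immediately afterwards that the associated retraction satisfies $r(x)=\max(U^X_x\cap A)$. All three verifications (the reduction to $x\in X-A$, the identity $f(x)=\max(U^X_x\cap A)$ for the idempotent $f$, and the check that $x\mapsto\max(U^X_x\cap A)$ is a continuous idempotent below $\id_X$ with image $A$) are sound, including your explicit reading of ``has a maximum'' as entailing nonemptiness.
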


From the proof of theorem 3.7 of \cite{cianci2019combinatorial} it follows that if $A$ is a dbp-retract of a finite  $T_0$--space $X$ and $r\colon X\to A$ is the corresponding retraction, then $r(x)=\max(U^X_x\cap A)$ for all $x\in X$.

\subsection*{Fixed point property}

Let $X$ be a topological space.
We say that a continuous map $f\colon X\to X$ has a \emph{fixed point} if there exists $x\in X$ such that $f(x)=x$.
We say that $X$ has the \emph{fixed point property{}} if all continuous maps $f\colon X\to X$ have a fixed point. It is easy to see that if a topological space fails to be $T_0$ or connected, then it does not have the fixed point propery. Thus, we will only consider connected finite $T_0$--spaces in this article.

It is known that if $X$ is a finite $T_0$--space, $f\colon X \to X$ is a continuous map and there exists $x\in X$ such that $f(x)$ and  $x$ are comparable, then $f$ has a fixed point. From this fact we deduce the following result.

\begin{prop}
\label{Uhat}
 Let $X$ be a finite $T_0$--space  and let $f\colon X\to X$ be a continuous map. If there exists $x \in X$ such that $\widehat{U}_x$ (respectively $\widehat{F}_x$) has the fixed point property, and $f(\widehat{U}_x) \subseteq U_x$ (respectively $f(\widehat{F}_x) \subseteq F_x$), then $f$ has a fixed point.
\end{prop}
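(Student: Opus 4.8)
The plan is to prove the statement for $\widehat{U}_x$ and to deduce the one for $\widehat{F}_x$ by duality. For the duality step I would use that $U^{X^{\op}}_x=F^{X}_x$, hence $\widehat{U}^{X^{\op}}_x=\widehat{F}^{X}_x$; that a map $f\colon X\to X$ is continuous if and only if $f\colon X^{\op}\to X^{\op}$ is, with the same fixed points in either case; and that the fixed point property of a finite poset agrees with that of its opposite. Applying the $\widehat{U}_x$--version to $X^{\op}$ and $f$ then yields the $\widehat{F}_x$--version for $X$ and $f$.

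So assume $\widehat{U}_x$ has the fixed point property and $f(\widehat{U}_x)\subseteq U_x$. First I would observe that $\widehat{U}_x\neq\varnothing$, because the empty space does not have the fixed point property. Then I would split into two cases. If $f(\widehat{U}_x)\subseteq\widehat{U}_x$, then $f$ restricts and corestricts to a continuous (that is, order-preserving) self-map of $\widehat{U}_x$, which by hypothesis has a fixed point, and this point is fixed by $f$ as well. If instead $f(\widehat{U}_x)\not\subseteq\widehat{U}_x$, then, since $f(\widehat{U}_x)\subseteq U_x=\widehat{U}_x\cup\{x\}$, there must be some $y\in\widehat{U}_x$ with $f(y)=x$; as $y\in\widehat{U}_x$ we have $y<x$, so, $f$ being order-preserving, $x=f(y)\leq f(x)$. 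Thus $x$ and $f(x)$ are comparable, and $f$ has a fixed point by the fact recalled immediately before the statement.

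The argument is short, and the only step that needs care is the last one: the point is that when $f$ does not map $\widehat{U}_x$ into itself it is forced to send some element of $\widehat{U}_x$ to $x$, and this forces $x$ and $f(x)$ to be comparable. One also has to keep track of the direction of the order, so that $y\in\widehat{U}_x$ really yields $x\leq f(x)$ (rather than the opposite inequality, which would be useless here).
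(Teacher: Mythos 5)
Your proof is correct and follows essentially the same route as the paper's: split according to whether some $y\in\widehat{U}_x$ is sent to $x$ (in which case $x=f(y)\leq f(x)$ gives comparability and hence a fixed point) or $f$ restricts to a self-map of $\widehat{U}_x$ (in which case the fixed point property of $\widehat{U}_x$ applies). You merely spell out the comparability step and the duality for the $\widehat{F}_x$ case, which the paper leaves implicit.
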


\begin{proof}
Suppose that $f(\widehat{U}_x) \subseteq U_x$. If there exists $y \in \widehat{U}_x$ such that $f(y) = x$, then $f$ has a fixed point. Otherwise, $f(\widehat{U}_x) \subseteq \widehat{U}_x$ and hence $f$ has a fixed point.
\end{proof}

We recall the following result.

\begin{prop}{\cite[Corollary 10.1.4]{barmak2011algebraic}}
\label{prop_XheY_ppf}
 Let $X$ and $Y$ be homotopy equivalent finite $T_0$--spaces. If $X$ has the fixed point property, then $Y$ also has the fixed point property.
\end{prop}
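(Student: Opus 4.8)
The plan is to reduce the statement to the behaviour of the fixed point property under the removal of a single beat point, and then to invoke the classical fact that two finite $T_0$--spaces are homotopy equivalent if and only if their cores are homeomorphic, where the core of a finite $T_0$--space is the (essentially unique) minimal finite space obtained from it by successively removing beat points.

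First I would prove that if $Z$ is a finite $T_0$--space with the fixed point property and $z_0\in Z$ is a beat point, then $Z-\{z_0\}$ has the fixed point property. Let $i\colon Z-\{z_0\}\to Z$ be the inclusion and let $r\colon Z\to Z-\{z_0\}$ be the retraction associated to the beat point $z_0$ (which exists by \cite[Proposition 1.3.4]{barmak2011algebraic}, so that $ri=\id$). Given a continuous map $h\colon Z-\{z_0\}\to Z-\{z_0\}$, the composite $ihr\colon Z\to Z$ has a fixed point $x$; since this fixed point lies in the image of $i$, we have $x\neq z_0$, hence $r(x)=x$, and therefore $h(x)=x$.

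Next I would prove the converse: if $z_0\in Z$ is a beat point and $Z-\{z_0\}$ has the fixed point property, then so does $Z$. Here I use that, for the retraction $r$ above, $ir\leq\id_Z$ when $z_0$ is a down beat point and $ir\geq\id_Z$ when $z_0$ is an up beat point. Given a continuous map $h\colon Z\to Z$, the composite $rhi\colon Z-\{z_0\}\to Z-\{z_0\}$ has a fixed point $z$, and applying $i$ to the equality $rhi(z)=z$ yields $(ir)\bigl(h(i(z))\bigr)=i(z)$. Hence $i(z)$ and $h(i(z))$ are comparable, so $h$ has a fixed point by the criterion recalled just before Proposition~\ref{Uhat}.

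Combining the two previous paragraphs and iterating along a sequence of beat-point removals $X=X_0\supseteq X_1\supseteq\cdots\supseteq X_n=X_c$ from $X$ to its core $X_c$, it follows that a finite $T_0$--space has the fixed point property if and only if its core does. Since $X$ and $Y$ are homotopy equivalent, their cores are homeomorphic, and the fixed point property is obviously invariant under homeomorphisms; therefore $X$ has the fixed point property if and only if $Y$ does. The step I expect to be the main obstacle is the converse direction: one must keep track of whether $ir$ lies below or above $\id_Z$ according to the type of the beat point, and turn a fixed point of the auxiliary map $rhi$ into a pair of comparable points for $h$; besides this, the argument rests on the description of the homotopy types of finite $T_0$--spaces via their cores, which is the substantive input taken from the literature.
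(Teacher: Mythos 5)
Your argument is correct and complete: both directions of the beat-point reduction are handled properly (the retract argument for removal, and the comparability of $i(z)$ with $h(i(z))$ via $ir\leq\id_Z$ or $ir\geq\id_Z$ for the converse), and the appeal to Stong's classification of homotopy types by cores closes the proof. The paper offers no proof of its own for this proposition --- it is quoted directly from \cite[Corollary 10.1.4]{barmak2011algebraic} --- and your route is essentially the standard argument given in that reference.
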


It follows from the previous proposition that if $X$ is a finite $T_0$--space with the fixed point property and $x$ is a beat point of $X$, then $X-\{x\}$ also has the fixed point property. As a further consequence, if $X$ is a contractible finite $T_0$--space, then it has the fixed point property. Therefore, if a finite $T_0$--space has a maximum or minimum element, then it is contractible and hence it has the fixed point property.

Let $X$ be a finite $T_0$--space and let $n\in\N$ such that $n\geq 2$. We say that $X$ is a \emph{$2n$--crown} if $X$ is homeomorphic to the finite $T_0$--space given by the following Hasse diagram
\begin{center}
\begin{tikzpicture}[y=1.5cm]
\tikzstyle{every node}=[font=\scriptsize]
\foreach \x in {1,2} \draw (\x,0) node(c\x)[inner sep=2pt]{$\bullet$} node[below=1]{$\x$}; 
\foreach \x in {3} \draw (\x,0) node(c\x)[inner sep=2pt]{$\cdots$};
\foreach \x in {4} \draw (\x,0) node(c\x)[inner sep=2pt]{$\bullet$} node[below=1]{${n-1}$}; 
\foreach \x in {5} \draw (\x,0) node(c\x)[inner sep=2pt]{$\bullet$} node[below=1]{$n$}; 

\foreach \x in {1,2} \draw (\x,1) node(b\x)[inner sep=2pt]{$\bullet$} node[above=1]{$n+\x$}; 
\foreach \x in {3} \draw (\x,1) node(b\x)[inner sep=2pt]{$\cdots$};
\foreach \x in {4} \draw (\x,1) node(b\x)[inner sep=2pt]{$\bullet$} node[above=1]{$2n-1$}; 
\foreach \x in {5} \draw (\x,1) node(b\x)[inner sep=2pt]{$\bullet$} node[above=1]{$2n$}; 

\foreach \x in {1,2} \draw (b1)--(c\x);
\foreach \x in {1} \draw (b2)--(c\x);
\draw[shorten >=1cm,shorten <=0cm] (b2)--(c3);
\draw[shorten >=1cm,shorten <=0cm] (c2)--(b3);
\draw[shorten >=1cm,shorten <=0cm] (b4)--(c3);
\draw[shorten >=1cm,shorten <=0cm] (c4)--(b3);
\foreach \x in {5} \draw (b4)--(c\x);
\foreach \x in {4,5} \draw (b5)--(c\x);
\end{tikzpicture}
\end{center}
For each element $a$ in a $2n$--crown $X$, we have that $X-\{a\}$ is contractible. From this fact we obtain the following proposition.

\begin{prop} \label{prop_crown}
Let $n\in\N$ such that $n\geq 2$ and let $X$ be a $2n$--crown. Let $f\colon X\to X$ be a continuous map. If $f$ is not bijective then $f$ has a fixed point.
\end{prop}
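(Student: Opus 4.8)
The plan is to exploit the fact recalled immediately before the statement, namely that for every element $a$ of a $2n$--crown $X$ the subspace $X-\{a\}$ is contractible, together with the consequence (noted above, ultimately coming from Proposition~\ref{prop_XheY_ppf}) that every contractible finite $T_0$--space has the fixed point property.

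First I would make a purely set-theoretic observation: since $X$ is finite, a self-map $f\colon X\to X$ that is not bijective cannot be surjective. Hence there exists $a\in X$ with $a\notin f(X)$, so that $f(X)\subseteq X-\{a\}$. In particular $f$ carries the subspace $X-\{a\}$ into itself, and therefore corestricts to a continuous map $g\colon X-\{a\}\to X-\{a\}$, continuity being inherited from that of $f$.

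Now $X-\{a\}$ is contractible, hence it has the fixed point property, so $g$ has a fixed point $x_0\in X-\{a\}$; then $f(x_0)=g(x_0)=x_0$, which proves the proposition. I expect no real obstacle here once the contractibility of $X-\{a\}$ is taken as given: the only point deserving a moment's care is the elementary remark that on a finite set the failure of bijectivity forces the failure of surjectivity. (If one preferred to avoid quoting the contractibility of $X-\{a\}$, one could instead collapse $X-\{a\}$ to a point by successively removing beat points and argue inductively, but invoking the stated fact is quicker.)
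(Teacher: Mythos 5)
Your argument is correct and is exactly the route the paper intends: the paper derives the proposition directly from the fact that $X-\{a\}$ is contractible (hence has the fixed point property), combined with the observation that a non-bijective self-map of a finite set misses some point $a$, so $f$ restricts to a self-map of $X-\{a\}$. No issues.
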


\section{The $\C$ construction }
\label{section-C_constr}

Let $S$ be a set. The power set of $X$ will be denoted by $\mathcal{P}(S)$ and the set $\mathcal{P}(S)-\{\varnothing \}$ will be denoted by $\mathcal{P}_{\neq\varnothing}(S)$.

\begin{definition}
Let $X$ be a finite $T_0$--space. Let $A$ be a non-empty subset of $X$. We define
\begin{displaymath}
U^X_A = \bigcap_{a\in A} U^X_a  \qquad \textnormal{and} \qquad F^X_A = \bigcap_{a\in A} F^X_a 
\end{displaymath}
\end{definition}

When there is no risk of confusion, the superindices $X$ of the previous definition will be omitted and thus $U^X_A$ and $F^X_A$ will be denoted by $U_A$ and $F_A$ respectively.

Note that if $X$ is a finite $T_0$--space and $A$ is a non-empty subset of $X$, then $U_A$ is an open subset of $X$ and $F_A$ is a closed subset of $X$.

From now on, if $X$ is a topological space, $\mathcal{C}(X)$ will denote the set of connected components of $X$. 

\begin{definition}
 Let $X$ be a finite $T_0$--space. We define
 \begin{align*}
  \U_0(X) &= \{U_A\subseteq X \mid A\in \mathcal{P}_{\neq\varnothing}(\mxl[X]) \wedge U_A\neq \varnothing \}\textup{ and}\\
  \F_0(X) &= \{F_A\subseteq X \mid A\in \mathcal{P}_{\neq\varnothing}(\mnl[X]) \wedge F_A\neq \varnothing \}.
 \end{align*}

We also define
\begin{align*}
 \U(X) &= \{C\subseteq X \mid C\in\mathcal{C}(U) \textup{ for some } U\in \U_0(X)\} \textup{ and}\\
 \F(X) &= \{C\subseteq X \mid C\in\mathcal{C}(F) \textup{ for some } F\in \F_0(X) \}.
 \end{align*}
\end{definition}

Note that $\U(X)$ and $\F(X)$ are finite sets.

\begin{rem}
\label{rem_cc_of_U_F_are_open_closed}
Let $X$ be a finite $T_0$--space.
 
(1) Clearly, the elements of $\F(X)$ are closed subsets of $X$. In addition, since finite topological spaces are locally connected it follows that the elements of $\U(X)$ are open subsets of $X$.
 
(2) For all $a\in \mxl[X]$, $U_a\in \U(X)$ since $U_a$ is connected. In a similar way, for all $b\in \mnl[X]$, $F_b\in \F(X)$.
\end{rem}

The following proposition states that the sets $\U(X)$ and $\F(X)$ of the previous definition are disjoint, except in a trivial case.

\begin{prop}
\label{prop_UF_disjuntos}
 Let $X$ be a finite $T_0$--space.
 \begin{enumerate}[(a)]
  \item If $X$ is connected and $\U(X)\cap \F(X)\neq\varnothing$, then $X$ has a maximum element and a minimum element.
  \item If $X$ has a maximum element then $\U(X) = \{X\}$.
  \item If $X$ has a minimum element then $\F(X) = \{X\}$.
  \item If $X$ is connected and $\U(X)\cap \F(X)\neq\varnothing$, then $\U(X) = \F(X) = \{X\}$.
 \end{enumerate}
\end{prop}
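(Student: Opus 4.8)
The plan is to dispatch parts (b) and (c) by directly unwinding the definitions, then to prove (a) by a clopen argument, and finally to deduce (d) from the three previous parts. For (b), suppose $X$ has a maximum element $m$. Then $\mxl[X]=\{m\}$, so $\mathcal{P}_{\neq\varnothing}(\mxl[X])=\{\{m\}\}$ and hence $\U_0(X)=\{U_m\}$; moreover $U_m=X$ since $x\le m$ for every $x\in X$. As a finite $T_0$--space with a maximum element is contractible, $X$ is connected, so $\mathcal{C}(U_m)=\mathcal{C}(X)=\{X\}$ and therefore $\U(X)=\{X\}$. Part (c) follows in exactly the same way (alternatively, by applying (b) to $X^\op$, using that $\mnl[X]=\mxl[X^\op]$, that $F^X_b=U^{X^\op}_b$, and that $X$ and $X^\op$ have the same connected components).

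For (a), assume $X$ is connected and pick $C\in\U(X)\cap\F(X)$. By Remark~\ref{rem_cc_of_U_F_are_open_closed}, $C$ is open in $X$ because it lies in $\U(X)$, and closed in $X$ because it lies in $\F(X)$; being a non-empty clopen subset of the connected space $X$, it must equal $X$. Now $C$ is a connected component of some $U_A\in\U_0(X)$ with $\varnothing\neq A\subseteq\mxl[X]$, so from $X=C\subseteq U_A\subseteq X$ we get $U_A=\bigcap_{a\in A}U_a=X$, which forces $U_a=X$ for each $a\in A$; since $A\neq\varnothing$, any such $a$ is a maximum of $X$. Symmetrically, $C$ is a connected component of some $F_B\in\F_0(X)$ with $\varnothing\neq B\subseteq\mnl[X]$, so $F_B=X$, forcing $F_b=X$ for each $b\in B$ and hence giving $X$ a minimum element.

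Finally, for (d): if $X$ is connected and $\U(X)\cap\F(X)\neq\varnothing$, then by (a) the space $X$ has both a maximum and a minimum, and then (b) and (c) yield $\U(X)=\{X\}=\F(X)$. I do not expect a real obstacle here; the only point needing a little care is the one already recorded in Remark~\ref{rem_cc_of_U_F_are_open_closed}, namely that a connected component of an open subset of a (locally connected) finite space is open and a connected component of a closed subset is closed --- once that is granted, the clopen argument in (a) carries the whole proof and everything else is bookkeeping with the definitions.
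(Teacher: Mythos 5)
Your proposal is correct and follows essentially the same route as the paper: the clopen argument forces $C=X$ in part (a), parts (b) and (c) are a direct unwinding of the definition of $\U_0$ and $\F_0$, and (d) is the combination of the first three. The only cosmetic difference is that you deduce the maximum from $U_a=X$ for each $a\in A$, whereas the paper notes that $U_A=X$ forces $A$ to be a singleton consisting of the unique maximal element; both are immediate and equivalent.
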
 
 
\begin{proof}\ 

 (\emph{a}) Let $C\in\U(X)\cap \F(X)$. Then $C$ is non-empty, open and closed in $X$ by remark \ref{rem_cc_of_U_F_are_open_closed}. Since $X$ is connected, $C=X$.
 
 Since $C\in\U(X)$, there exists $U\in \U_0(X)$ such that $C$ is a connected component of $U$. Hence $U=X$. Since $X$ is finite and $X=U$ has at most one maximal element of $X$, then $X$ has a unique maximal element, that is, $X$ has a maximum element.
 
 In a similar way, it follows that $X$ has a minimum element.
 
 (\emph{b}) If $a=\max X$, then $\U_0(X) = \{U_a\}=\{X\}$ and hence $\U(X)=\{X\}$.
 
 (\emph{c}) Analogous to the proof of (\emph{b}).
 
 (\emph{d}) It follows immediately from (\emph{a}), (\emph{b}) and (\emph{c}).
\end{proof}

Observe that in the previous proposition, the hypothesis that $X$ is connected is necessary for parts (\emph{a}) and (\emph{d}). Indeed, consider the discrete space of two points $X=\{0,1\}$. Clearly $F_0=\{0\}=U_0$. Hence $\{0\}\in \U(X)\cap \F(X)$, but $X$ has neither a maximum element nor a minimum element. Therefore item ($a$) does not hold. In addition, it is easy to verify that $\U(X)=\F(X)=\{\{0\},\{1\}\}$, and thus item ($d$) does not hold.

\begin{definition}
\label{defi_orden_en_C(X)}
Let $X$ be a finite $T_0$--space. We define the relation $\leq$ in $\U(X)\cup \F(X)$ as follows.

Given $C_1,C_2\in \U(X)\cup \F(X)$, we will say that $C_1\leq C_2$ if and only if one of the following conditions holds:
\begin{itemize}
 \item $C_1, C_2\in \U(X)$ and $C_1\subseteq C_2$, 
 \item $C_1, C_2\in \F(X)$ and $C_1\supseteq C_2$,
 \item $C_1\in \F(X)$, $C_2\in \U(X)$ and $C_1\cap C_2 \neq \varnothing$.
\end{itemize}
\end{definition}

It is not difficult to prove that the previously defined relation $\leq$ is an order relation in $\U(X)\cup\F(X)$.

\begin{definition}
Let $X$ be a finite $T_0$--space.
\begin{itemize}
\item We define $\U(X)$ as the finite $T_0$--space associated to the poset $(\U(X),\subseteq)$.
\item We define $\F(X)$ as the finite $T_0$--space associated to the poset $(\F(X),\supseteq)$.
\item We define $\C(X)$ as the finite $T_0$--space associated to the poset $(\U(X)\cup\F(X),\leq)$, where $\leq$ is the order defined in \ref{defi_orden_en_C(X)}.
\end{itemize}
\end{definition}

Observe that $\U(X)$ and $\F(X)$ are subspaces of $\C(X)$.

The following is a simple example of the $\C$ construction.

\begin{ex} \label{ex_easy}
Let $X$ be the finite $T_0$--space given by the following Hasse diagram. 
\begin{center}
\begin{tikzpicture}
\tikzstyle{every node}=[font=\scriptsize]
\foreach \x in {0,1} \draw (\x,0) node(\x){$\bullet$} node[below=1]{\x}; 
\foreach \x in {2,3,4} \draw (\x-2.5,1) node(\x){$\bullet$} node[above=1]{\x}; 
\foreach \x in {2,3,4} \draw (0)--(\x);
\foreach \x in {3,4} \draw (1)--(\x);
\end{tikzpicture}
\end{center}
Then the Hasse diagram of $\C(X)$ is
\begin{center}
\begin{tikzpicture}[x=1.5cm, y=1.5cm]
\path node (U2) at (0,3) [CCC] {2\\0}
node (U3) at (1,3) [CCC] {3\\0 1}
node (U4) at (2,3) [CCC] {4\\0 1}
 
node (U34a) at (0.5,2) [CCC] {0} 
node (U34b) at (1.5,2) [CCC] {1} 
 
node (F01a) at (0.7,1) [CCC] {3}
node (F01b) at (1.7,1) [CCC] {4}

node (F0) at (0.7,0) [CCC] {2 3 4\\0}
node (F1) at (1.7,0) [CCC] {3 4\\1};

\draw [-] (U34a) to (U3) to (U34b) to (U4) to (U34a) to (U2);
\draw [-] (F0) to (F01a) to (F1) to (F01b) to (F0);
\draw[-,bend left=20] (F0) to (U34a);
\draw[-,bend left=20] (F1) to (U34b);
\draw[-,bend right=20] (F01a) to (U3);
\draw[-,bend right=20] (F01b) to (U4);
\end{tikzpicture}
\end{center}
\end{ex}

The following proposition describes the maximal and minimal elements of the poset $\C(X)$ for any given finite $T_0$--space $X$.

\begin{prop}
 \label{prop_mnl_mxl_de_C(X)}
 Let $X$ be a finite $T_0$--space. Then \[\mxl[\C(X)] = \{U_a \mid a\in \mxl[X] \}\quad \textup{and}\quad \mnl[\C(X)] = \{F_a \mid a\in \mnl[X] \}.\]
\end{prop}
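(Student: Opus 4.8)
The plan is to establish the two equalities separately, and by the symmetry between $\U$ and $\F$ (via passing to $X^\op$, under which $\U(X)$ and $\F(X)$ swap roles and the order reverses), it suffices to determine $\mxl[\C(X)]$; the formula for $\mnl[\C(X)]$ then follows. So I would focus on showing $\mxl[\C(X)] = \{U_a \mid a\in \mxl[X]\}$.

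First I would check that each $U_a$ with $a\in\mxl[X]$ is indeed a maximal element of $\C(X)$. By remark \ref{rem_cc_of_U_F_are_open_closed}(2), $U_a\in\U(X)\subseteq\C(X)$. Suppose $U_a\leq C$ for some $C\in\C(X)$. Since $U_a\in\U(X)$, the definition of $\leq$ (definition \ref{defi_orden_en_C(X)}) forces $C\in\U(X)$ and $U_a\subseteq C$. Now $C$ is a connected component of some $U_B\in\U_0(X)$ with $B\in\mathcal{P}_{\neq\varnothing}(\mxl[X])$. The element $a$ lies in $C\subseteq U_B\subseteq U_b$ for each $b\in B$; but $a$ is maximal and $a\leq b$, so $a=b$ for every $b\in B$, whence $B=\{a\}$ and $U_B = U_a$. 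Then $C$ is a connected component of $U_a$, which is connected (it has maximum $a$), so $C=U_a$. Hence $U_a$ is maximal.

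Next I would show these are the only maximal elements, i.e. every maximal element of $\C(X)$ has the form $U_a$ with $a$ maximal in $X$. Take $C\in\C(X)$. If $C\in\F(X)$, pick any point $x\in C$ (so $C\subseteq F_x$, and actually $C$ is a connected component of some $F_B$ with $B\subseteq\mnl[X]$, $x\geq b$ for the relevant $b$'s); choose a maximal element $a$ of $X$ with $x\leq a$. Then $a\in C$ as well? Not necessarily — rather I should observe $U_a\in\U(X)$ and $U_a\cap C\neq\varnothing$ (it contains $x$), so by the third clause of definition \ref{defi_orden_en_C(X)}, $C\leq U_a$; since $U_a\neq C$ (one is in $\F(X)$, the other in $\U(X)$, and by proposition \ref{prop_UF_disjuntos} they can only coincide when $X$ has a maximum, a case I'd handle separately — there $\C(X)=\{X\}=\{U_a\}$ trivially), $C$ is not maximal. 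If instead $C\in\U(X)$, then $C$ is a connected component of some $U_B\in\U_0(X)$. Pick $x\in C$ and a maximal element $a$ of $X$ with $x\leq a$; I'd argue $a\in C$. Indeed $a\in U_B$: for each $b\in B$ we have $x\leq b$ and $b$ maximal, hence $x\leq a$ and $x\leq b$, but that does not immediately give $a\in U_b$ — so instead note that if $a\neq b$ then $x\leq a$ and $x\leq b$ with $a,b$ both maximal forces... this needs care. The cleaner route: $x$ and $a$ are comparable with $x\leq a$, so the segment $\{x,a\}$ is connected and contained in $U_B$ (since $a\leq$ nothing but itself and $x\in U_B$ means... wait, $a\in U_B$ iff $a\leq b$ for all $b\in B$, and $a$ maximal gives $a\leq b\Rightarrow a=b$). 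So $a\in U_B$ iff $B=\{a\}$. If $B\neq\{a\}$ then $a\notin U_B$, yet $x\in C\subseteq U_B\subseteq U_{b_0}$ for $b_0\in B$ with $b_0\neq a$; then $U_{b_0}\in\U(X)$ and $C\subseteq U_{b_0}$, and $U_{b_0}$ is connected (max $b_0$) hence equals a whole connected component of itself, so $C\leq U_{b_0}$ with $C\neq U_{b_0}$ unless $C=U_{b_0}$; if $C\subsetneq U_{b_0}$ then $C$ is not maximal, and if $C=U_{b_0}$ then $C=U_{b_0}$ is of the desired form with $b_0$ maximal. This dichotomy, pushed to its conclusion, shows $C$ maximal implies $C=U_a$ for a maximal $a$.

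The main obstacle is exactly this last step: showing that a maximal element of $\C(X)$ cannot be a proper connected component of some $U_B$ with $|B|\geq 2$, nor a connected component of $U_B$ that is itself properly contained in some single $U_b$. The argument above — repeatedly replacing $U_B$ by a single $U_b$ containing the chosen component, using that single $U_b$'s are connected and lie in $\U(X)$ — resolves it, but I'd want to phrase it as: for any $C\in\U(X)$ and any $x\in C$, choosing a maximal $a\geq x$ gives $U_a\supseteq\{x\}$ with $U_a$ connected, $x\in C\cap U_a$, hence (both being connected components of open sets, or directly from the order) $C\leq U_a$ in $\C(X)$; maximality of $C$ then forces $C=U_a$. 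I'd double-check that $C\leq U_a$ follows cleanly: if $C\subseteq U_a$ outright we use the first clause; if not, I use that $C$ and $U_a$ are both open connected sets meeting at $x$, so $C\cup U_a$ is connected and open, contained in... hmm, this may fail since $C\cup U_a$ need not lie in any $U_B$. The safe version: since $C$ is maximal and $C\cup U_a$ is connected with $x$ in both, and $U_a$ is a maximal element we already established, comparability of $C$ and $U_a$ via any common refinement — I'd instead argue directly that $x\in C$ implies $U_x\subseteq C$ (as $C$ is open and $U_x$ is the minimal open set at $x$, and $C$ connected containing $x$ contains all of $U_x$ since $U_x$ is connected and meets $C$ — actually $U_x\subseteq C$ because $C$ open $\ni x$ forces $U_x\subseteq C$ directly); then for maximal $a\geq x$, $a\in U_x\subseteq C$, so $U_a\subseteq C$? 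No — $U_a\ni a$ but $U_a$ need not be in $C$. However $a\in C$ and $C$ open gives $U_a\subseteq C$. Hence $U_a\subseteq C$, so $U_a\leq C$, and since $U_a$ is maximal, $C=U_a$. That is the clean argument, and it makes the whole proof short.
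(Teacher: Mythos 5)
Your first inclusion and your treatment of a maximal $C\in\F(X)$ are fine and essentially coincide with the paper's argument. The problem is the ``clean argument'' you settle on at the very end for a maximal $C\in\U(X)$: you claim that for $x\in C$ and a maximal element $a$ of $X$ with $a\geq x$ one has $a\in U_x\subseteq C$, hence $a\in C$ and $U_a\subseteq C$. This reverses the order convention of the paper. Here $U_x$ is the minimal open set containing $x$, i.e.\ the down-set $\{y\mid y\leq x\}$, and open sets are down-sets; so $a\geq x$ gives $x\in U_a$, not $a\in U_x$, and an open set $C$ has no reason to contain a maximal element of $X$ lying above one of its points. Concretely, in example \ref{ex_easy} take $C=U_3=\{0,1,3\}$, $x=0$ and the maximal element $a=2$: then $2\notin C$ and $C\not\subseteq U_2=\{0,2\}$, so your argument would force the false conclusion $U_3=U_2$. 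The chain ``$U_a\subseteq C$, so $U_a\leq C$, so $C=U_a$ by maximality of $U_a$'' therefore never gets off the ground.

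The repair is the argument you already wrote (hedged and then abandoned) in the middle of your proposal, and it is exactly what the paper does: a maximal $C\in\U(X)$ is by definition a connected component of $U_A$ for some non-empty set $A$ of maximal elements of $X$; picking any $a\in A$ gives $C\subseteq U_A\subseteq U_a$, hence $C\leq U_a$ in $\C(X)$, and maximality of $C$ forces $C=U_a$. In other words, the maximal element $a$ has to be taken from $\ub C$ (equivalently, from the defining set $A$), not as an arbitrary maximal element above an arbitrary point of $C$. Keep that version and delete the final ``short'' one; with that substitution your proof matches the paper's.
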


\begin{proof}
Let $a\in \mxl[X]$. Since $U_a$ is connected, $U_a\in \C(X)$. Let $C\in \U(X)$ such that $C\geq U_a$. There exists a non-empty subset $A\subseteq \mxl[X]$ such that $C$ is a connected component of $U_A$. Then $U_a\subseteq C\subseteq U_A$. It follows that $a\leq x$ for all $x\in A$, and thus $a = x$ for all $x\in A$. Hence $A=\{a\}$ and $U_a = U_A = C$. Therefore, $U_a\in \mxl[\C(X)]$.
 
Now suppose that $C\in \mxl[\C(X)]$. Suppose, in addition, that $C\in \F(X)$ and let $x\in C$. Since $X$ is finite, there exists $b\in \mxl[X]$ such that $x\leq b$. Hence $b\in C$ since $C$ is closed. Thus, $C\cap U_b\neq \varnothing$ and then $C<U_b$ which contradicts the maximality of $C$. 

Thus $C\in \U(X)$. Then there exists a non-empty subset $A\subseteq \mxl[X]$ such that $C$ is a connected component of $U_A$. Let $a\in A$. Then $C\subseteq U_a$, that is, $C\leq U_a$. Hence $C=U_a$.
 
The second identity can be proved in a similar way.
\end{proof}

The following proposition shows that the construction $\F$ can be interpreted as a dual version of $\U$.

\begin{prop}
\label{prop_C_op}
Let $X$ be a finite $T_0$--space.
\begin{enumerate}[(a)]
\item For all $A\subseteq \mnl[X]$, $F_A^X = \left(U_A^{X^{\op}}\right)^{\op}$. 
\item $\F(X) = \U(X^{\op})$ as sets.
\item $\C(X^{\op}) =\left(\C(X) \right)^{\op}$. In particular, $\F(X) = (\U(X^{\op}))^{\op}$ as posets with the corresponding partial orders induced from $\C(X)$ and $\C(X^{\op})$ respectively.
\end{enumerate}
\end{prop}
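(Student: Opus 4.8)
The plan is to establish the three statements in order, using (a) as the computational backbone and then leveraging it to deduce (b) and (c) essentially formally. The key observation is that passing to $X^{\op}$ interchanges $U_x$ with $F_x$, minimal elements with maximal elements, and the containment $\subseteq$ with itself (as sets) while reversing all the order relations.

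\smallskip

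For part (a), I would unwind both sides set-theoretically. By definition $F_A^X = \bigcap_{a\in A} F_a^X$, and $F_a^X = \{y\in X \mid a \leq y\}$ in the order of $X$. On the other side, $U_A^{X^{\op}} = \bigcap_{a\in A} U_a^{X^{\op}}$, and $U_a^{X^{\op}} = \{y \mid y \leq^{\op} a\} = \{y \mid a \leq y\} = F_a^X$ as subsets of the underlying set. So $F_A^X$ and $U_A^{X^{\op}}$ coincide as subsets of the common underlying set; the superscript $\op$ on the right-hand side of (a) then just records that we regard this subset with the subspace topology from $X^{\op}$ rather than from $X$, which is exactly $\left(U_A^{X^{\op}}\right)^{\op}$ since $(Z^{\op})^{\op}=Z$ and subspace topology commutes with $(-)^{\op}$. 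I would also note that $A\subseteq\mnl[X]$ if and only if $A\subseteq\mxl[X^{\op}]$, so the index sets appearing in the definitions of $\F_0(X)$ and $\U_0(X^{\op})$ match up.

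\smallskip

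For part (b): by part (a), the nonempty sets of the form $F_A^X$ with $A$ a nonempty subset of $\mnl[X]$ are exactly the nonempty sets of the form $U_A^{X^{\op}}$ with $A$ a nonempty subset of $\mxl[X^{\op}]$, i.e.\ $\F_0(X) = \U_0(X^{\op})$ as collections of subsets. Connected components are a purely topological notion and are insensitive to passing to $X^{\op}$ (the associated graph/comparability structure is the same up to reversal, hence has the same connected components as a partition of the underlying set); therefore taking connected components of each member gives $\F(X) = \U(X^{\op})$ as sets. Here I would invoke that $C\subseteq Z$ is connected iff it is connected in $Z^{\op}$, which follows from the homeomorphism-invariance of connectedness together with the fact that $\id$ is a homeomorphism only in trivial cases — more carefully, connectedness of a finite space depends only on its comparability graph, which is preserved by $(-)^{\op}$.

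\smallskip

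For part (c): I must show the order on $\C(X^{\op})$ is the opposite of the order on $\C(X)$. By (b) the underlying sets agree: $\U(X)\cup\F(X) = \F(X^{\op})\cup\U(X^{\op})$. Now I would check the three clauses of Definition~\ref{defi_orden_en_C(X)} case by case. If $C_1,C_2\in\U(X)$, then in $\C(X^{\op})$ they lie in $\F(X^{\op})$, and $C_1\leq C_2$ in $\C(X)$ means $C_1\subseteq C_2$, which in $\C(X^{\op})$ is the $\F$-clause condition $C_1\supseteq C_2$ read backwards, i.e.\ $C_2\leq C_1$ in $\C(X^{\op})$; the $\F,\F$ case is symmetric. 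The mixed clause ($C_1\in\F(X)$, $C_2\in\U(X)$, $C_1\cap C_2\neq\varnothing$) becomes, in $\C(X^{\op})$, the pair $C_1\in\U(X^{\op})$, $C_2\in\F(X^{\op})$ with $C_1\cap C_2\neq\varnothing$, which is again the mixed clause with the roles swapped, giving $C_2\leq C_1$ in $\C(X^{\op})$. Since the mixed clause never compares two $\U$-elements or two $\F$-elements, and the intersection condition is symmetric in $C_1,C_2$, this exhausts all cases and shows $C_1\leq C_2$ in $\C(X)$ iff $C_2\leq C_1$ in $\C(X^{\op})$, i.e.\ $\C(X^{\op}) = (\C(X))^{\op}$. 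The final assertion about $\F(X)$ as a poset follows by restricting this identity to the subspace $\F(X)=\U(X^{\op})$.

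\smallskip

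\emph{Main obstacle.} The genuinely substantive point is the verification in (b) that connected components are unaffected by passing to $X^{\op}$; everything else is bookkeeping about reversing orders and containments. I expect this to be a one-line remark once phrased in terms of the comparability graph, but it is the place where a careless argument could go wrong, so I would state it explicitly rather than leave it implicit. A secondary nuisance is keeping the two meanings of $\subseteq$ versus $\supseteq$ straight in the $\F$-clause when everything is simultaneously being dualized; a small table of cases is the cleanest way to avoid sign errors there.
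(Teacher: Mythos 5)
Your proposal is correct and follows essentially the same route as the paper: part (a) by the pointwise identification $F_a^X=\bigl(U_a^{X^{\op}}\bigr)^{\op}$ and commuting intersections, part (b) as a consequence (with the — correct and worth making explicit — observation that connected components are unchanged under passing to $X^{\op}$, which the paper leaves implicit in its ``Follows from (a)''), and part (c) by the same three-clause case analysis of the order on $\C(X^{\op})$ versus $\leq^{\op}$.
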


\begin{proof}\ 
 
(\emph{a}) Let $A\subseteq \mnl[X]$. Note that for all $a\in A$, $F_a = \left(U_a^{X^{\op}} \right)^{\op}$. Hence
\[
F_A^X =\bigcap_{a\in A} F_a = \bigcap_{a\in A}\left(U_a^{X^{\op}} \right)^{\op}= \left(\bigcap_{a\in A} U_a^{X^{\op}}\right)^{\op} = \left(U_A^{X^{\op}}\right)^{\op}.
\]
 
(\emph{b}) Follows from (\emph{a}).

($c$) It follows from ($b$) that $\U(X^\op)\cup \F(X^\op) = \F(X)\cup \U(X)$. Then the underlying sets of $\C(X^{\op})$ and $\left(\C(X) \right)^{\op}$ coincide. Let $\leq$ denote the partial order of $\C(X)$ and let $\preceq$ denote the partial order of $\C(X^{\op})$. Let $C_1,C_2\in \C(X^{\op})$. 
\begin{itemize}
\item If $C_1,C_2\in \U(X^{\op})=\F(X)$, then $C_1\preceq C_2$ if and only if  $C_1\subseteq C_2$ if and only if $C_2\leq C_1$ if and only if $C_1\leq^{\op} C_2$.
\item If $C_1,C_2\in \F(X^{\op})=\U(X)$, then $C_1\preceq C_2$ if and only if  $C_1\supseteq C_2$ if and only if $C_2\leq C_1$ if and only if $C_1\leq^{\op} C_2$.
\item If $C_1\in \F(X^{\op})$ and $C_2\in \U(X^{\op})$, then $C_1\preceq C_2$ if and only if  $C_1\cap C_2\neq \varnothing$ if and only if $C_2\leq C_1$ if and only if $C_1\leq^{\op} C_2$.
\end{itemize}
Hence, the partial orders $\preceq$ and $\leq^{\op}$ coincide.
\end{proof}

\begin{definition}
Let $X$ be a finite $T_0$--space and let $B\subseteq X$ be a subset. We define
\begin{displaymath}
\ub B=\{a\in\mxl[X]\mid B\subseteq U_a \}  \qquad  \textnormal{and} \qquad \lb B=\{a\in\mnl[X]\mid B\subseteq F_a \}.
\end{displaymath}
\end{definition}

\begin{rem}
Let $X$ be a finite $T_0$--space and let $B\subseteq X$ be a subset.

If $\ub B \neq \varnothing$ then $B\subseteq U_{\ub B}$. If, in addition, $B$ is a connected subspace of $X$ then there exists a unique connected component $C_0$ of $U_{\ub B}$ such that $B\subseteq C_0$.

Similarly, if $\lb B \neq \varnothing$ then $B\subseteq F_{\lb B}$, and if, in addition, $B$ is a connected subspace of $X$ then there exists a unique connected component $C_0$ of $F_{\lb B}$ such that $B\subseteq C_0$.
\end{rem}

\begin{lemma}
\label{lema-para_conexo_minimal}
 Let $X$ be a finite $T_0$--space and let $B\subseteq X$ be a connected subspace. Then:
 \begin{enumerate}[(a)]
  \item $\ub B \neq \varnothing$ if and only if $\{C\in\U(X)\mid B\subseteq C \} \neq\varnothing$.
  \item $\lb B \neq\varnothing$ if and only if $\{C\in\F(X)\mid B\subseteq C \} \neq\varnothing$.
 \end{enumerate}
\end{lemma}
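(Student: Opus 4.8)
The plan is to prove (a) directly from the definitions and then deduce (b) by duality via Proposition~\ref{prop_C_op}.

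First I would establish (a). For the forward implication, assume $\ub B\neq\varnothing$. Since $B$ is connected it is non-empty, and by the remark preceding the statement $B\subseteq U_{\ub B}$, so $U_{\ub B}\neq\varnothing$; as $\ub B\in\mathcal{P}_{\neq\varnothing}(\mxl[X])$ this shows $U_{\ub B}\in\U_0(X)$. The same remark then provides a connected component $C_0$ of $U_{\ub B}$ with $B\subseteq C_0$, and $C_0\in\U(X)$ by definition, whence $\{C\in\U(X)\mid B\subseteq C\}\neq\varnothing$. For the converse, suppose $C\in\U(X)$ satisfies $B\subseteq C$. By definition $C$ is a connected component of some $U\in\U_0(X)$, and $U=U_A$ for a non-empty subset $A\subseteq\mxl[X]$ with $U_A\neq\varnothing$. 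Then $B\subseteq C\subseteq U_A=\bigcap_{a\in A}U_a$, so every $a\in A$ lies in $\mxl[X]$ and satisfies $B\subseteq U_a$, i.e.\ $a\in\ub B$; since $A\neq\varnothing$ we conclude $\ub B\neq\varnothing$.

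For (b) I would argue by duality rather than repeat the computation. Note that a subspace $B$ of $X$ is connected if and only if the corresponding subspace of $X^{\op}$ is connected, because the partition of a finite $T_0$--space into connected components depends only on the comparability relation, which is unchanged by passing to the opposite. Moreover, for $a\in\mnl[X]=\mxl[X^{\op}]$ we have $F_a^X=(U_a^{X^{\op}})^{\op}$, so these sets have the same underlying set, and therefore $\lb B$ computed in $X$ coincides with $\ub B$ computed in $X^{\op}$. Since $\F(X)=\U(X^{\op})$ as sets by Proposition~\ref{prop_C_op}(b), statement (b) for $X$ is exactly statement (a) applied to $X^{\op}$.

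The argument is largely bookkeeping. The two points needing care are verifying that $U_{\ub B}$ (resp.\ $F_{\lb B}$) genuinely belongs to $\U_0(X)$ (resp.\ $\F_0(X)$) --- this is where non-emptiness of the connected set $B$ is used --- and confirming that connectedness of a subspace is insensitive to replacing $X$ by $X^{\op}$ in the reduction used for (b). I do not expect any genuine obstacle.
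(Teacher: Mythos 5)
Your proof is correct and follows essentially the same route as the paper: the converse direction of (a) is identical, and your treatment of (b) by passing to $X^{\op}$ via Proposition~\ref{prop_C_op} is just a precise form of the paper's ``the proof of item (b) is similar.'' The only cosmetic difference is in the forward direction of (a), where the paper argues more directly: for any $a_0\in\ub B$ the set $U_{a_0}$ is already an element of $\U(X)$ containing $B$ (being connected, by Remark~\ref{rem_cc_of_U_F_are_open_closed}), so one need not invoke $U_{\ub B}\in\U_0(X)$ or the non-emptiness of $B$ at all.
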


\begin{proof}
 We will prove only item (\emph{a}). The proof of item (\emph{b}) is similar.
 
 ($\Rightarrow$) Let $a_0\in \ub B$. Then $U_{a_0}\in\{C\in\U(X)\mid B\subseteq C \}$.
 
 ($\Leftarrow$) Let $C_0\in\U(X)$ such that $B\subseteq C_0$ and let $A\subseteq \mxl[X]$ be a non-empty subset such that $C_0$ is a connected component of $U_A$. Let $a_0\in A$. Then $B\subseteq U_A\subseteq U_{a_0}$. Hence $a_0\in\ub B$.
\end{proof}

The following proposition states a key property of the $\C$ construction. 

\begin{prop}
\label{prop_conexo_minimal}
Let $X$ be a finite $T_0$--space and let $B\subseteq X$ be a connected non-empty subspace. 
\begin{enumerate}[(a)]
\item If $\{C\in \U(X) \mid B\subseteq C \}$ is non-empty, then it has a minimum element, which is the connected component of $U_{\ub B}$ that contains $B$.
\item If $\{C\in \F(X) \mid B\subseteq C \}$ is non-empty, then it has a maximum element, which is the connected component of $F_{\lb B}$ that contains $B$. 
\end{enumerate}
\end{prop}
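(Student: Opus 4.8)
The plan is to prove part~(a) directly and obtain part~(b) from it by the duality of Proposition~\ref{prop_C_op}. So assume $\mathcal{S}:=\{C\in\U(X)\mid B\subseteq C\}$ is non-empty. By Lemma~\ref{lema-para_conexo_minimal}(a) this forces $\ub B\neq\varnothing$, so $U_{\ub B}=\bigcap_{a\in\ub B}U_a$ is a well-defined open set which is non-empty because it contains $B$. Since $B$ is connected, let $C_0$ be the unique connected component of $U_{\ub B}$ containing $B$. As $\ub B$ is a non-empty subset of $\mxl[X]$ and $U_{\ub B}\neq\varnothing$, we have $U_{\ub B}\in\U_0(X)$ and hence $C_0\in\U(X)$; thus $C_0\in\mathcal{S}$. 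It therefore remains only to check that $C_0$ is the minimum of $\mathcal{S}$, i.e.\ that $C_0\subseteq C$ for every $C\in\mathcal{S}$.

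First I would establish the set-theoretic inclusion $U_{\ub B}\subseteq U_A$ for the relevant index set $A$. Fix $C\in\mathcal{S}$ and choose a non-empty $A\subseteq\mxl[X]$ with $U_A\neq\varnothing$ such that $C$ is a connected component of $U_A$. From $B\subseteq C\subseteq U_A=\bigcap_{a\in A}U_a$ we get $B\subseteq U_a$ for every $a\in A$, that is, $A\subseteq\ub B$; intersecting over this larger index set yields $U_{\ub B}=\bigcap_{a\in\ub B}U_a\subseteq\bigcap_{a\in A}U_a=U_A$.

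The crux is to upgrade this to $C_0\subseteq C$. Now $C_0$ is connected and $C_0\subseteq U_{\ub B}\subseteq U_A$, so $C_0$ is a connected subset of $U_A$; moreover $B\subseteq C_0\cap C$, so $C_0$ meets the connected component $C$ of $U_A$. A connected subset of a space that meets one of its connected components is contained in that component, hence $C_0\subseteq C$. This proves~(a), and I do not anticipate a genuine obstacle here --- the only mild care needed is to remember that the order on $\U(X)$, as a subspace of $\C(X)$, is inclusion, so ``minimum'' means smallest subset.

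For part~(b) I would apply~(a) to $X^{\op}$. By Proposition~\ref{prop_C_op} we have $\F(X)=\U(X^{\op})$ as sets, the order on $\F(X)$ inherited from $\C(X)$ is the opposite of the order on $\U(X^{\op})$ inherited from $\C(X^{\op})$, and $F_{\lb B}^{X}=\bigl(U_{\ub B}^{X^{\op}}\bigr)^{\op}$, where $\lb B$ computed in $X$ coincides with $\ub B$ computed in $X^{\op}$ (since $\mnl[X]=\mxl[X^{\op}]$ and $B\subseteq F_a^{X}$ if and only if $B\subseteq U_a^{X^{\op}}$). Hence the minimum of $\{C\in\U(X^{\op})\mid B\subseteq C\}$ furnished by~(a) --- the connected component of $U_{\ub B}^{X^{\op}}$ containing $B$ --- is precisely the maximum of $\{C\in\F(X)\mid B\subseteq C\}$, and it is the connected component of $F_{\lb B}^{X}$ containing $B$. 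Alternatively, one may simply repeat the argument of~(a) verbatim with $U$, $\mxl$ and $\subseteq$ replaced throughout by $F$, $\mnl$ and $\supseteq$.
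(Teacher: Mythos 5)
Your proposal is correct and follows essentially the same route as the paper: establish $\ub B\neq\varnothing$ via Lemma~\ref{lema-para_conexo_minimal}, take $C_0$ to be the connected component of $U_{\ub B}$ containing $B$, show $A\subseteq\ub B$ hence $U_{\ub B}\subseteq U_A$ for any competitor $C$, and conclude $C_0\subseteq C$ since $C_0$ is a connected subset of $U_A$ meeting the component $C$. Your extra explicit checks (that $C_0\in\U(X)$, and the duality argument for part~(b)) are fine elaborations of what the paper leaves implicit.
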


\begin{proof}
By the previous lemma, $\ub B\neq\varnothing$. Let $C_0$ be the unique connected component of $U_{\ub B}$ such that $B\subseteq C_0$.

Let $C\in \U(X)$ such that $B\subseteq C$. There exists a non-empty subset $A\subseteq \mxl[X]$ such that $C$ is a connected component of $U_{A}$. Hence $B\subseteq U_a$ for all $a\in A$. Then, $A\subseteq \ub B$ and thus $U_{\ub B}\subseteq U_{A}$. Hence $C_0\subseteq U_{A}$. 
 
Since $C$ is a connected component of $U_{A}$ and $\varnothing\neq B\subseteq C_0 \cap C$, it follows that $C_0\subseteq C$, that is, $C_0\leq C$. Hence $C_0 = \min \{C\in \U(X) \mid B\subseteq C\}$.
 
The second part can be proved in a similar way. 
\end{proof}

The following corollary follows immediately from the previous proposition.

\begin{coro} \label{coro_cc_of_U_Bsharp_F_Bflat}
Let $X$ be a finite $T_0$--space.
\begin{enumerate}[(a)]
\item Let $B\in\U(X)$. Then $B$ is a connected component of $U_{\ub B}$.
\item Let $B\in\F(X)$. Then $B$ is a connected component of $F_{\lb B}$.
\end{enumerate}
\end{coro}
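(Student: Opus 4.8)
The plan is to derive Corollary~\ref{coro_cc_of_U_Bsharp_F_Bflat} directly from Proposition~\ref{prop_conexo_minimal} by feeding in the connected subspace $B$ itself. First I would treat item (a). Since $B\in\U(X)$, the space $B$ is a non-empty connected subspace of $X$ (by Remark~\ref{rem_cc_of_U_F_are_open_closed}, it is in fact open, but connectedness and non-emptiness are all we need), and moreover $B$ belongs to the set $\{C\in\U(X)\mid B\subseteq C\}$, witnessing that this set is non-empty. Hence Proposition~\ref{prop_conexo_minimal}(a) applies: the set $\{C\in\U(X)\mid B\subseteq C\}$ has a minimum element, and that minimum is exactly the connected component $C_0$ of $U_{\ub B}$ containing $B$.

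The remaining step is to observe that this minimum must be $B$ itself: since $B\in\{C\in\U(X)\mid B\subseteq C\}$ and $C_0$ is the minimum of that set, we get $C_0\subseteq B$; combined with $B\subseteq C_0$ (which holds by the very definition of $C_0$ as the component containing $B$), we conclude $B=C_0$. Therefore $B$ is the connected component of $U_{\ub B}$ that contains it. Item (b) is proved in the completely analogous way, applying Proposition~\ref{prop_conexo_minimal}(b) to the connected non-empty subspace $B\in\F(X)$ and using that the maximum of $\{C\in\F(X)\mid B\subseteq C\}$, which equals the connected component of $F_{\lb B}$ containing $B$, must coincide with $B$.

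I do not anticipate any real obstacle here, since the statement is essentially the specialization $B\in\U(X)$ (resp.\ $B\in\F(X)$) of the preceding proposition; the only thing to be careful about is the bookkeeping that the minimum (resp.\ maximum) of the relevant family equals $B$ rather than merely containing or being contained in it, which is immediate from antisymmetry of $\subseteq$. Alternatively, one could note that $\ub B\neq\varnothing$ by Lemma~\ref{lema-para_conexo_minimal}(a) and then argue set-theoretically that any connected component of $U_A$ equal to $B$ forces $C_0=B$, but routing through Proposition~\ref{prop_conexo_minimal} is cleaner and is presumably the intended argument.
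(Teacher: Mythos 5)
Your proof is correct and is exactly the argument the paper intends: the authors state that the corollary ``follows immediately from the previous proposition,'' and your write-up simply makes explicit the specialization of Proposition~\ref{prop_conexo_minimal} to $C=B$ together with the antisymmetry step identifying the minimum with $B$ itself.
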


Proposition \ref{prop_conexo_minimal} allows us to give the following definition.

\begin{definition}
Let $X$ be a finite $T_0$--space and let $x\in X$. We define 
\begin{displaymath}
\CU(x)=\min \{C\in \U(X) \mid x\in C \}
\end{displaymath}
and 
\begin{displaymath}
\CF(x)=\max \{C\in \F(X) \mid x\in C \}.
\end{displaymath}
\end{definition}

\begin{prop}
Let $X$ be a finite $T_0$--space.
\begin{enumerate}[(a)]
\item The set $\U(X)$ is a basis for a topology in $X$ which is coarser than the topology of $X$.
\item Let $\mathcal{T}$ be the topology generated by $\U(X)$. The Kolmogorov quotient of $(X,\mathcal{T})$ is homeomorphic to a subspace of the finite $T_0$--space associated to the poset $\U(X)$.
\end{enumerate}
\end{prop}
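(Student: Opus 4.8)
The plan is to treat the two parts separately, deducing both from Proposition~\ref{prop_conexo_minimal} and the standard correspondence between finite spaces and preorders.

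For part~(\emph{a}) I would verify the two basis axioms. Covering: since $X$ is finite, every $x\in X$ lies below some $a\in\mxl[X]$, so $x\in U_a$ and $U_a\in\U(X)$ by Remark~\ref{rem_cc_of_U_F_are_open_closed}(2); hence $\bigcup_{C\in\U(X)}C=X$. Intersection: given $C_1,C_2\in\U(X)$ and $x\in C_1\cap C_2$, apply Proposition~\ref{prop_conexo_minimal}(\emph{a}) to the connected non-empty subspace $B=\{x\}$ to see that $\CU(x)=\min\{C\in\U(X)\mid x\in C\}$ is well-defined; since $C_1$ and $C_2$ both belong to $\{C\in\U(X)\mid x\in C\}$, minimality gives $x\in\CU(x)\subseteq C_1\cap C_2$ with $\CU(x)\in\U(X)$. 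Thus $\U(X)$ is a basis. Finally, every member of $\U(X)$ is open in $X$ by Remark~\ref{rem_cc_of_U_F_are_open_closed}(1), so each $\mathcal{T}$-open set, being a union of members of $\U(X)$, is open in $X$; that is, $\mathcal{T}$ is coarser than the topology of $X$.

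For part~(\emph{b}) the key point is that in the finite space $(X,\mathcal{T})$ the minimal open set containing $x$ is precisely $\CU(x)$: it is a member of the basis $\U(X)$ that contains $x$, and it is contained in every member of that basis containing $x$. Consequently the preorder associated to $(X,\mathcal{T})$ is $x\preceq y\iff\CU(x)\subseteq\CU(y)$, two points are topologically indistinguishable in $(X,\mathcal{T})$ exactly when $\CU(x)=\CU(y)$, and the Kolmogorov quotient $(X,\mathcal{T})_0$ is the poset with underlying set $\{\,[x]\mid x\in X\,\}$ and order $[x]\le[y]\iff\CU(x)\subseteq\CU(y)$. I would then define $\varphi\colon (X,\mathcal{T})_0\to\U(X)$ by $\varphi([x])=\CU(x)$; by the description of the equivalence relation it is well-defined and injective, and by the description of the order it is order-preserving and order-reflecting. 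Since an injective, order-preserving and order-reflecting map between finite posets is a homeomorphism onto its image equipped with the subspace topology, $(X,\mathcal{T})_0$ is homeomorphic to the subspace $\{\CU(x)\mid x\in X\}$ of the finite $T_0$--space associated to the poset $(\U(X),\subseteq)$.

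I do not expect a serious obstacle. The only points needing care are: in part~(\emph{a}), invoking Proposition~\ref{prop_conexo_minimal} first so that $\CU(x)$ is known to exist before its minimality is used; and in part~(\emph{b}), checking that $\varphi$ not only preserves but also reflects the order, so that it is genuinely a homeomorphism onto a \emph{subspace} rather than merely a continuous bijection onto its image. One should also note that the image $\{\CU(x)\mid x\in X\}$ need not be all of $\U(X)$, since a general element of $\U(X)$ need not be of the form $\CU(x)$; this is precisely why part~(\emph{b}) only claims homeomorphism with a subspace.
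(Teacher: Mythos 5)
Your proposal is correct and follows essentially the same route as the paper: part (a) via the covering by the $U_a$ for $a\in\mxl[X]$ and the minimality of $\CU(x)$ from Proposition \ref{prop_conexo_minimal}, and part (b) via the map $x\mapsto\CU(x)$, which the paper factors through the quotient map $q\colon X\to K$ and shows to be order-preserving and order-reflecting, exactly as you do with $\varphi$. The points you flag as needing care (existence of $\CU(x)$ before using its minimality, and order-reflection to get an embedding rather than a mere continuous bijection) are precisely the ones the paper's proof attends to.
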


\begin{proof}\ 

($a$) Since $U_a\in\U(X)$ for all $a\in\mxl[X]$ it follows that $\displaystyle\bigcup_{C\in\U(X)}C=X$. Now let $C_1,C_2\in\U(X)$ and let $x\in C_1\cap C_2$. Then $x\in\CU(x)\subseteq C_1 \cap C_2$ by \ref{prop_conexo_minimal}. Thus, $\U(X)$ is a basis for a topology in $X$ which, by \ref{rem_cc_of_U_F_are_open_closed}, is clearly coarser than the topology of $X$.

($b$) Let $K$ be the Kolmogorov quotient of $(X,\mathcal{T})$ and let $q\colon X\to K$ be the quotient map. Let $f\colon X\to \U(X)$ be defined by 
$f(x)=\CU(x)$. We will prove that $f$ is order-preserving. Let $a,b\in X$ be such that $a\leq b$ then $a\in\CU(b)$ since $\CU(b)$ is an open subset of $X$ which contains $b$. Thus $\CU(a)\subseteq \CU(b)$. Hence, $f$ is order-preserving, and therefore, it is a continuous map.

Now, let $c,d\in X$ be such that $q(c)=q(d)$. Thus, $\{C\in U(X) \mid q(c)\in C\}=\{C\in U(X) \mid q(d)\in C\}$ since $\U(X)$ is a basis for the topology $\mathcal{T}$. Hence, $f(c)=\CU(c)=\CU(d)=f(d)$. Therefore, there exists a continuous map $\overline{f}\colon K\to \U(X)$ such that $\overline{f}q=f$.

We will prove now that if $k,l\in K$ are such that $\overline{f}(k)\leq \overline{f}(l)$ then $k\leq l$. Let $k,l\in K$ be such that $\overline{f}(k)\leq \overline{f}(l)$. Let $x,y\in X$ be such that $k=q(x)$ and $l=q(y)$. Then $f(x)=\overline{f}(k)\leq \overline{f}(l)=f(y)$. Hence $\CU(x)\subseteq \CU(y)$. Then, for all $C\in \U(X)$ such that $y\in C$ we obtain that $x\in\CU(x)\subseteq \CU(y)\subseteq C$, and thus $x\in C$. Therefore $x\leq y$. Hence, $k=q(x)\leq q(y)=l$.
\end{proof}

\begin{definition} \label{def_Uf_Ff_Cf}
Let $X$ and $Y$ be finite $T_0$--spaces and let $f\colon X\to Y$ be a continuous map. We define 
\begin{displaymath}
\U(f)\colon \U(X)\to \U(Y) \qquad \textnormal{by} \qquad \U(f)(C)= \min \{D\in\U(Y) \mid f(C)\subseteq D \},
\end{displaymath}
and
\begin{displaymath}
\F(f)\colon \F(X)\to \F(Y) \qquad \textnormal{by} \qquad \F(f)(C)= \max \{D\in\F(Y) \mid f(C)\subseteq D \}.
\end{displaymath}
If, in addition, $\U(X)\cap\F(X)=\varnothing$, we define 
\begin{displaymath}
\C(f)\colon \C(X)\to \C(Y) \qquad \textnormal{by} \qquad \C(f)(C)= 
\begin{cases}
\U(f)(C) & \textup{if } C\in\U(X), \\
\F(f)(C) & \textup{if } C\in\F(X).
\end{cases}
\end{displaymath}
\end{definition}

We will prove now that $\U(f)$ is well defined. Let $C\in\U(X)$. Then there exists $a\in \mxl[X]$ such that $C\subseteq U_a$. Let $b\in\mxl[Y]$ such that $f(a)\leq b$. Then $f(C)\subseteq U_{f(a)}\subseteq U_b$. Hence $\ub{f(C)}\neq \varnothing$. Then $\{D\in\U(Y) \mid f(C)\subseteq D \}$ is non-empty by \ref{lema-para_conexo_minimal}, and thus it has a minimum element by \ref{prop_conexo_minimal}.

In a similar way it can be proved that $\F(f)$ is well defined and thus $\C(f)$ is also well defined.

\medskip

Note that the map $\C(f)$ might not be well defined without the assumption $\U(X)\cap\F(X)=\varnothing$. Note also that if $\U(X)\cap\F(X)\neq\varnothing$ then either $X$ is not connected or it has minimum and maximum elements by \ref{prop_UF_disjuntos}.

\begin{rem} \label{rem_def_de_Uf_and_Ff}
Let $X$ and $Y$ be finite $T_0$--spaces and let $f\colon X\to Y$ be a continuous map. 

Observe that, if $C\in\U(X)$, then, by \ref{prop_conexo_minimal}, $\U(f)(C)$ is the connected component of $U_{\ub {f(C)}}$ that contains $f(C)$.

In a similar way, if $C\in\F(X)$, then $\F(f)(C)$ is the connected component of $F_{\lb {f(C)}}$ that contains $f(C)$.
\end{rem}

We will show in the following proposition that the maps defined in \ref{def_Uf_Ff_Cf} are continuous.

\begin{prop}
Let $X$ and $Y$ be finite $T_0$--spaces and let $f\colon X\to Y$ be a continuous map. Then the maps $\U(f)$ and $\F(f)$ are continuous. If, in addition, $\U(X)\cap\F(X)=\varnothing$ then the map $\C(f)\colon \C(X)\to \C(Y)$ is continuous.
\end{prop}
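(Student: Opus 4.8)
The plan is to use throughout that a map between finite $T_0$--spaces is continuous if and only if it is order-preserving, so it suffices to check that $\U(f)$, $\F(f)$ and (when $\U(X)\cap\F(X)=\varnothing$) $\C(f)$ preserve the respective orders. Note first that since each element $C$ of $\U(X)$ or $\F(X)$ is connected and $f$ is continuous, $f(C)$ is a connected subset of $Y$, so the minima and maxima occurring in Definition \ref{def_Uf_Ff_Cf} all exist — this is exactly what was verified above when checking that these maps are well defined, and it is also the content of Remark \ref{rem_def_de_Uf_and_Ff}.

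For $\U(f)$, I would take $C_1,C_2\in\U(X)$ with $C_1\leq C_2$, i.e.\ $C_1\subseteq C_2$. Then $f(C_1)\subseteq f(C_2)\subseteq \U(f)(C_2)$, the last inclusion by definition of $\U(f)(C_2)$, so $\U(f)(C_2)$ lies in the set $\{D\in\U(Y)\mid f(C_1)\subseteq D\}$; since $\U(f)(C_1)$ is the minimum of this set in $(\U(Y),\subseteq)$, we get $\U(f)(C_1)\subseteq\U(f)(C_2)$, that is $\U(f)(C_1)\leq\U(f)(C_2)$. The argument for $\F(f)$ is the mirror image: if $C_1,C_2\in\F(X)$ with $C_1\leq C_2$, i.e.\ $C_1\supseteq C_2$, then $f(C_2)\subseteq f(C_1)\subseteq \F(f)(C_1)$, so $\F(f)(C_1)\in\{D\in\F(Y)\mid f(C_2)\subseteq D\}$, and since $\F(f)(C_2)$ is the maximum of this set in $(\F(Y),\supseteq)$ we obtain $\F(f)(C_1)\leq\F(f)(C_2)$. (Alternatively, one can derive the statement for $\F(f)$ from the one for $\U(f)$ applied to $f\colon X^{\op}\to Y^{\op}$, using Proposition \ref{prop_C_op}.)

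Finally, assuming $\U(X)\cap\F(X)=\varnothing$, I would prove that $\C(f)$ is order-preserving by running through the three clauses of Definition \ref{defi_orden_en_C(X)} that can make $C_1\leq C_2$ hold in $\C(X)$. Since $\U(X)$ and $\F(X)$ are subspaces of $\C(X)$ (and likewise for $Y$) and $\U(f)$, $\F(f)$ are the restrictions of $\C(f)$, the first two clauses (both $C_i$ in $\U(X)$, or both in $\F(X)$) are handled by the previous paragraph. The remaining clause — $C_1\in\F(X)$, $C_2\in\U(X)$, $C_1\cap C_2\neq\varnothing$ — is the only place a genuinely new (though still short) argument is needed, and it is the step I would flag as the main point: choose $x\in C_1\cap C_2$; then $f(x)\in f(C_1)\subseteq\F(f)(C_1)=\C(f)(C_1)$ and $f(x)\in f(C_2)\subseteq\U(f)(C_2)=\C(f)(C_2)$, so $\C(f)(C_1)\cap\C(f)(C_2)\neq\varnothing$, which by Definition \ref{defi_orden_en_C(X)} applied to $\C(Y)$ means precisely $\C(f)(C_1)\leq\C(f)(C_2)$. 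As these three clauses exhaust the order relations of $\C(X)$, this shows $\C(f)$ is order-preserving, hence continuous.
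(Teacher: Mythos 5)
Your proposal is correct and follows essentially the same route as the paper: both proofs check order-preservation case by case over the three clauses of Definition \ref{defi_orden_en_C(X)}, using the inclusion $f(C)\subseteq\U(f)(C)$ (resp. $f(C)\subseteq\F(f)(C)$) together with minimality (resp. maximality) for the first two clauses, and the nonemptiness of $f(C_1)\cap f(C_2)$ for the mixed clause. The only cosmetic difference is that the paper phrases the mixed case via the inclusion $\varnothing\neq f(C_1)\cap f(C_2)\subseteq\C(f)(C_1)\cap\C(f)(C_2)$ rather than by choosing an explicit point $x\in C_1\cap C_2$.
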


\begin{proof}
Let $C_1, C_2\in \C(X)$ such that $C_1\leq C_2$. 
\begin{itemize}
 \item If $C_1,C_2\in \U(X)$, then $C_1\subseteq C_2$. Hence $f(C_1)\subseteq f(C_2)\subseteq \U(f)(C_2)$. Then $\U(f)(C_1)\subseteq \U(f)(C_2)$ by definition of $\U(f)$. Thus, $\U(f)(C_1)\leq \U(f)(C_2)$.
 \item If $C_1,C_2\in \F(X)$, then $C_2\subseteq C_1$ and $f(C_2)\subseteq f(C_1)\subseteq \F(f)(C_1)$. Hence $\F(f)(C_2)\subseteq \F(f)(C_1)$ by definition of $\F(f)$. Thus, $\F(f)(C_1)\leq \F(f)(C_2)$.
 \item If $C_1\in \F(X)$ and $C_2\in \U(X)$, then $C_1\cap C_2\neq \varnothing$. Hence $\varnothing \neq f(C_1)\cap f(C_2)\subseteq \C(f)(C_1)\cap \C(f)(C_2)$. Thus, $\C(f)(C_1)\leq \C(f)(C_2)$.
\end{itemize}
The result follows.
\end{proof}

\begin{prop} \label{prop_Uf_homeo}
Let $X$ and $Y$ be finite $T_0$--spaces and let $f\colon X\to Y$ be a homeomorphism. Then 
\begin{enumerate}[(a)]
\item for all $C\in \U(X)$, $\U(f)(C)=f(C)$,
\item for all $C\in \F(X)$, $\F(f)(C)=f(C)$,
\item $\U(f)$ and $\F(f)$ are homeomorphisms.
\end{enumerate}
\end{prop}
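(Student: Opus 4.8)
The plan is to exploit that a homeomorphism between finite $T_0$--spaces is an order-isomorphism, and that both operations entering the definition of $\U$ --- forming $U_A$ for $A$ a set of maximal elements, and passing to connected components --- are compatible with such maps. So the first step I would carry out is pure set-level bookkeeping: if $f\colon X\to Y$ is a homeomorphism, then $f$ restricts to a bijection $\mxl[X]\to\mxl[Y]$, we have $f(U^X_a)=U^Y_{f(a)}$ for every $a\in X$, hence $f(U^X_A)=U^Y_{f(A)}$ for every non-empty $A\subseteq\mxl[X]$ (using injectivity of $f$ to commute it with the finite intersection), and $f$ carries the connected components of $U^X_A$ bijectively onto those of $U^Y_{f(A)}$. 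From this it follows that $C\mapsto f(C)$ is a well-defined bijection from the set $\U(X)$ onto the set $\U(Y)$, and, running the same argument for $f^{-1}$, that its inverse is $D\mapsto f^{-1}(D)$.

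Granting this, part (a) is immediate: for $C\in\U(X)$ we get $f(C)\in\U(Y)$, and $f(C)$ belongs to $\{D\in\U(Y)\mid f(C)\subseteq D\}$ while being contained in every member of it, so it is the minimum of that set, which is $\U(f)(C)$ by definition. Part (b) I would obtain either by repeating the argument with minimal elements, $F_A$ and reversed inclusion in place of maximal elements, $U_A$ and inclusion, or by applying (a) to $f^\op\colon X^\op\to Y^\op$ and invoking proposition \ref{prop_C_op} together with the identification $\F(f)=\U(f^\op)$ that it yields. For part (c), I would note that $\U(f)$ is continuous by the preceding proposition and, by (a), equals the bijection $C\mapsto f(C)$; applying (a) to $f^{-1}$ shows that $\U(f^{-1})$ is its two-sided inverse (since $\U(f^{-1})(\U(f)(C))=f^{-1}(f(C))=C$ and symmetrically) and is also continuous, whence $\U(f)$ is a homeomorphism; the case of $\F(f)$ is identical.

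The only place with actual content is that first step --- showing $f$ carries the set $\U(X)$ onto the set $\U(Y)$ --- and that is the step I would be most careful to spell out, since everything else is formal. The subtleties there are routine but worth stating: that $f$ commutes with the intersection defining $U_A$ uses injectivity; that $U^Y_{f(A)}$ lies in $\U_0(Y)$ uses $f(A)\subseteq\mxl[Y]$ and $U^Y_{f(A)}\neq\varnothing$, both holding because $f$ is a bijective order-isomorphism; and that a homeomorphism sends connected components exactly onto connected components. I do not anticipate any genuine obstacle.
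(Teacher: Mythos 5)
Your proposal is correct and follows essentially the same route as the paper: both rest on the observation that a homeomorphism sends $\mxl[X]$ to $\mxl[Y]$, sends $U^X_A$ to $U^Y_{f(A)}$, and carries connected components to connected components, so that $C\mapsto f(C)$ maps $\U(X)$ onto $\U(Y)$ and is therefore the minimum computed by $\U(f)$; part (c) is then obtained in both by applying (a) to $f^{-1}$. The only cosmetic difference is that the paper identifies $f(C)$ as the connected component of $U_{f(\ub C)}$ containing $f(C)$ rather than arguing directly with the minimum, which is an equivalent formulation.
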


\begin{proof}\ 

($a$) Since $f$ is a homeomorphism, $f(\mxl[X])=\mxl[Y]$ and $f(U_A)=U_{f(A)}$ for each non-empty subset $A\subseteq \mxl[X]$. Let $C\in \U(X)$. Then $C$ is a connected component of $U_{\ub C}$ and hence $f(C)$ is a connected component of $U_{f(\ub C)}$. Thus $\U(f)(C)=f(C)$. 

($b$) Analogous to ($a$).

($c$) Let $C\in \U(X)$. Then applying ($a$) we obtain that $\U(f^{-1})\U(f)(C)=\U(f^{-1})(f(C))=f^{-1}(f(C))=C$. Thus $\U(f^{-1})\U(f) = \id_{\U(X)}$. Similarly $\U(f)\U(f^{-1}) = \id_{\U(Y)}$.

The proof for $\F(f)$ follows similarly.
\end{proof}

In the following example we will see that the constructions $\U$, $\F$ and $\C$ are not functorial.

\begin{ex}
Let $X$ be the finite $T_0$--space whose underlying set is $\{0,1,2,3,4\}$ and whose topology is generated by the basis $\{\{0\},\{0,1\},\{2\},\{0,1,2,3\},\{0,1,2,4\}\}$. The Hase diagrams of $X$ and $\U(X)$ are shown below.
\smallskip
\begin{center}
\begin{tikzpicture}[baseline=-1.25cm]
\tikzstyle{every node}=[font=\scriptsize]
\draw (0,0) node(0){$\bullet$} node[below=1]{0}; 
\draw (0,1) node (1){$\bullet$} node[left=1]{1};
\draw (1,1) node (2){$\bullet$} node[right=1]{2};
\foreach \x in {3,4} \draw (\x-3,2) node(\x){$\bullet$} node[above=1]{\x}; 
  
\draw (0)--(1);
\foreach \x in {4,3} \draw (1)--(\x);
\foreach \x in {4,3} \draw (2)--(\x);
\draw (0.5,-0.9) node(X){\normalsize $X$};
\end{tikzpicture}
\hspace*{2cm}
\begin{tikzpicture}[x=2.3cm, y=2cm]
\path node (U3) at (0,1) [CC] {3\\1 2\\0}
 node (U4) at (1,1) [CC] {4\\1 2\\0}
 node (U1) at (0,0) [CC] {1\\0}
 node (U2) at (1,0) [CC] {2};

\draw (U1)--(U4)--(U2)--(U3)--(U1);
\draw (0.5,-0.5) node(UX){$\U(X)$};
\end{tikzpicture}
\end{center}

Let $f\colon X\to X$ be the map defined by
\begin{center}
\begin{tabular}{c | c c c c c}
$x$ & 0 & 1 & 2 & 3& 4\\[-2pt]
\hline
$f(x)$& 2 & 3 & 3 & 3& 3
\end{tabular}
\end{center}
Clearly $f$ is a continuous map. Let $g\colon X\to X$ be the constant map with value 0. Then $fg$ is the constant map with value 2. It follows that $\U(g)$ is the constant map with value $\{0,1\}$ and that $\U(fg)$ is the constant map with value $\{2\}$. Then 
\[\U(f)\U(g)(\{2\})=\U(f)(\{0,1\})= U_3 \neq \{2\} = \U(fg)(\{2\}).
\]
Hence, $\U(fg)\neq\U(f)\U(g)$. 

This implies that $\C(fg)\neq\C(f)\C(g)$, and the fact that $\F$ is not functorial can be deduced from this example considering opposite spaces.
\end{ex}

\begin{prop}
Let $X$, $Y$ and $Z$ be finite $T_0$--spaces and let $g\colon X\to Y$ and $f\colon Y\to Z$ be continuous maps. Then $\U(fg) \leq \U(f)\U(g)$ and $\F(fg) \geq \F(f)\F(g)$. If, in addition, $\U(X)\cap\F(X)=\varnothing$ and $\U(Y)\cap\F(Y)=\varnothing$ then $\C(fg) \simeq \C(f)\C(g)$.
\end{prop}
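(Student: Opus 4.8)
The plan is to prove the two pointwise inequalities $\U(fg)\leq\U(f)\U(g)$ and $\F(fg)\geq\F(f)\F(g)$ directly from the minimality and maximality in Definition~\ref{def_Uf_Ff_Cf}, and then to deduce $\C(fg)\simeq\C(f)\C(g)$ from them by means of the criterion recalled in Section~2, namely that two continuous maps between finite $T_0$--spaces are homotopic as soon as their values are pointwise comparable.

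For the first inequality I would fix $C\in\U(X)$ and chase the definitions. Since $\U(g)(C)=\min\{D\in\U(Y)\mid g(C)\subseteq D\}$, we have $g(C)\subseteq\U(g)(C)$, and since $\U(f)(\U(g)(C))=\min\{E\in\U(Z)\mid f(\U(g)(C))\subseteq E\}$, we have $f(\U(g)(C))\subseteq\U(f)\U(g)(C)$; composing yields $(fg)(C)=f(g(C))\subseteq\U(f)\U(g)(C)$. As $\U(f)\U(g)(C)$ is an element of $\U(Z)$ containing $(fg)(C)$, minimality of $\U(fg)(C)$ among such elements forces $\U(fg)(C)\subseteq\U(f)\U(g)(C)$, that is, $\U(fg)(C)\leq\U(f)\U(g)(C)$ in $\U(Z)$; since $C$ was arbitrary this gives $\U(fg)\leq\U(f)\U(g)$. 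The second inequality is the same chase read through the reverse inclusion order of $\F$: from $g(C)\subseteq\F(g)(C)$ and $f(\F(g)(C))\subseteq\F(f)\F(g)(C)$ one gets $(fg)(C)\subseteq\F(f)\F(g)(C)$; since $\F(fg)(C)$ is, as a set, the smallest element of $\F(Z)$ containing $(fg)(C)$ --- this is what the maximum with respect to $\supseteq$ amounts to --- we obtain $\F(fg)(C)\subseteq\F(f)\F(g)(C)$, which is precisely $\F(fg)(C)\geq\F(f)\F(g)(C)$ in $\F(Z)$. In both cases Proposition~\ref{prop_conexo_minimal} is what guarantees that the relevant minima and maxima exist.

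For the homotopy statement I would first observe that the hypotheses $\U(X)\cap\F(X)=\varnothing$ and $\U(Y)\cap\F(Y)=\varnothing$ make $\C(g)$, $\C(f)$ and $\C(fg)$ all well defined, so that $\C(fg)$ and $\C(f)\C(g)$ are genuine continuous maps $\C(X)\to\C(Z)$; by the homotopy criterion above it then suffices to check that $\C(fg)(C)$ and $\C(f)\C(g)(C)$ are comparable in $\C(Z)$ for every $C\in\C(X)$. I would split on whether $C\in\U(X)$ or $C\in\F(X)$. If $C\in\U(X)$ then $\C(g)(C)=\U(g)(C)\in\U(Y)$, hence $\C(f)\C(g)(C)=\U(f)\U(g)(C)$ and $\C(fg)(C)=\U(fg)(C)$, and these two elements of $\U(Z)$ are comparable by the first inequality; if $C\in\F(X)$ the analogous identities together with the second inequality give comparability inside $\F(Z)$. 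Either way $\C(fg)(C)$ and $\C(f)\C(g)(C)$ are comparable in $\C(Z)$, so $\C(fg)\simeq\C(f)\C(g)$. I do not anticipate any real obstacle here: the only point that needs care is keeping the two opposite conventions straight --- inclusion as the order on $\U$, reverse inclusion as the order on $\F$ --- so that the inequality signs and the final comparability step all come out consistently; no induction or further case analysis is required.
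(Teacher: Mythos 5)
Your proposal is correct and follows essentially the same route as the paper's own proof: the inclusion chain $fg(C)\subseteq f(\U(g)(C))\subseteq \U(f)\U(g)(C)$ combined with minimality (resp.\ the dual argument for $\F$), and then the pointwise-comparability criterion for homotopy of maps between finite $T_0$--spaces. The only difference is that you spell out the $\F$ case and the $\U$/$\F$ case split explicitly where the paper leaves them as "similar"; your handling of the reverse-inclusion order on $\F$ is accurate.
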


\begin{proof}
Let $C\in \U(X)$. Then
\[fg(C)\subseteq f(\U(g)(C))\subseteq \U(f)(\U(g)(C)).
\]
Hence, $\U(fg)(C)\subseteq\U(f)(\U(g)(C))$. Therefore, $\U(fg) \leq \U(f)\U(g)$. In a similar way it can be proved that $\F(fg) \geq \F(f)\F(g)$.

If, in addition, $\U(X)\cap\F(X)=\varnothing$ and $\U(Y)\cap\F(Y)=\varnothing$ then the maps $\C(fg)$ and $\C(f)\C(g)$ are defined. Since $\U(fg) \leq \U(f)\U(g)$ and $\F(fg) \geq \F(f)\F(g)$ we obtain that, for all $C\in\C(X)$, the elements $\C(fg)(C)$ and $\C(f)\C(g)(C)$ are comparable. Thus, the last assertion follows.
\end{proof}

\begin{prop} \label{prop_f_leq_g_Uf_leq_Ug}
Let $X$ and $Y$ be finite $T_0$--spaces and let $f,g\colon X\to Y$ be continuous maps. 
\begin{enumerate}[(a)]
\item If $f\leq g$, then $\U(f)\leq \U(g)$ and $\F(f)\leq \F(g)$.
\item If $f\leq g$ and $\U(X)\cap \F(X)=\varnothing$, then $\C(f)\leq \C(g)$.
\item If $f\simeq g$, then $\U(f)\simeq \U(g)$ and $\F(f)\simeq \F(g)$.
\item If $f\simeq g$ and $\U(X)\cap \F(X)=\varnothing$, then $\C(f)\simeq \C(g)$.
\end{enumerate}
\end{prop}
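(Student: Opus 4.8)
The plan is to prove (a) directly from the explicit descriptions of $\U(f)$ and $\F(f)$ in Definition~\ref{def_Uf_Ff_Cf} (and Remark~\ref{rem_def_de_Uf_and_Ff}), then to read off (b) from (a), and to deduce (c) and (d) from (a) and (b) respectively by invoking the fence characterization of homotopy of maps between finite $T_0$--spaces recalled in the Preliminaries (\cite[Corollary~1.2.6]{barmak2011algebraic}). Thus the only substantive work is part (a).

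For (a), suppose $f\le g$, i.e.\ $f(x)\le g(x)$ for every $x\in X$, and fix $C\in\U(X)$; the goal is $\U(f)(C)\subseteq\U(g)(C)$, which is precisely $\U(f)(C)\le\U(g)(C)$ in $\U(Y)$. Recall $\U(g)(C)=\min\{D\in\U(Y)\mid g(C)\subseteq D\}$; by Remark~\ref{rem_cc_of_U_F_are_open_closed} this set is open in $Y$, and it contains $g(C)$. Hence for each $x\in C$ we have $g(x)\in\U(g)(C)$, so $U_{g(x)}\subseteq\U(g)(C)$; since $f(x)\le g(x)$ gives $f(x)\in U_{g(x)}$, we conclude $f(x)\in\U(g)(C)$. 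Therefore $f(C)\subseteq\U(g)(C)$, and as $\U(g)(C)\in\U(Y)$, minimality of $\U(f)(C)=\min\{D\in\U(Y)\mid f(C)\subseteq D\}$ yields $\U(f)(C)\subseteq\U(g)(C)$. The statement $\F(f)\le\F(g)$ is the exact dual: $\F(f)(C)$ is closed in $Y$ and contains $f(C)$, and $f(x)\le g(x)$ forces $g(x)\in F_{f(x)}\subseteq\F(f)(C)$, so $g(C)\subseteq\F(f)(C)$; since the order on $\F(Y)$ is reverse inclusion, $\F(g)(C)=\max\{D\in\F(Y)\mid g(C)\subseteq D\}$ is the inclusion-minimal such closed set, whence $\F(g)(C)\subseteq\F(f)(C)$, that is, $\F(f)(C)\le\F(g)(C)$. (Alternatively, the $\F$ statement follows from the $\U$ statement applied to $f^{\op},g^{\op}\colon X^{\op}\to Y^{\op}$ via Proposition~\ref{prop_C_op}.)

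Part (b) is then immediate: when $\U(X)\cap\F(X)=\varnothing$ the maps $\C(f)$ and $\C(g)$ are defined, and for $C\in\C(X)$ either $C\in\U(X)$, so $\C(f)(C)=\U(f)(C)\le\U(g)(C)=\C(g)(C)$ by (a), or $C\in\F(X)$, so $\C(f)(C)=\F(f)(C)\le\F(g)(C)=\C(g)(C)$ by (a); in both cases the comparison occurs within the subspace $\U(Y)$ or $\F(Y)$ of $\C(Y)$ and hence holds in $\C(Y)$, so $\C(f)\le\C(g)$. For (c), choose a fence $f=f_0\le f_1\ge f_2\le\cdots\le f_n=g$ witnessing $f\simeq g$; applying (a) to each consecutive pair produces a fence $\U(f_0)\le\U(f_1)\ge\cdots\le\U(f_n)$ in $\U(Y)$, so $\U(f)\simeq\U(g)$, and similarly $\F(f)\simeq\F(g)$. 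For (d), the same fence together with (b) (each $\C(f_i)$ being defined because $\U(X)\cap\F(X)=\varnothing$) gives a fence $\C(f_0)\le\cdots\le\C(f_n)$ in $\C(Y)$, whence $\C(f)\simeq\C(g)$.

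I do not expect a genuine obstacle; the only point that needs care is the bookkeeping of the reversed order on $\F$, namely that ``$\max$'' in the definition of $\F(g)(C)$ selects the inclusion-minimal closed set and that $C_1\le C_2$ in $\F(Y)$ means $C_1\supseteq C_2$, so that the set inclusions established above do translate into the intended order relations.
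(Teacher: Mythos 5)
Your proof is correct and follows essentially the same route as the paper: part (a) is established by showing $f(C)\subseteq\U(g)(C)$ using openness of $\U(g)(C)$ (dually, $g(C)\subseteq\F(f)(C)$ using closedness), and (b)--(d) are read off from (a) exactly as the paper does. Your extra care with the reversed order on $\F(Y)$ matches the paper's intent.
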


\begin{proof}

We will prove ($a$). Item ($b$) follows from ($a$), while items ($c$) and ($d$) follow from ($a$) and ($b$).

Let $C\in \U(X)$. If $x\in C$ then $f(x)\leq g(x)\in g(C)\subseteq \U(g)(C)$, and since $\U(g)(C)$ is an open subset of $Y$ it follows that $f(x)\in \U(g)(C)$. Therefore, $f(C)\subseteq \U(g)(C)$. Hence $\U(f)(C)\leq \U(g)(C)$.

Now let $C\in \F(X)$. If $x\in C$ then $g(x)\geq f(x)\in f(C)\subseteq \F(f)(C)$, and since $\F(f)(C)$ is a closed subset of $Y$ it follows that $g(x)\in \F(f)(C)$. Therefore, $g(C)\subseteq \F(f)(C)$. Hence $\F(f)(C)\leq \F(g)(C)$.
\end{proof} 

\begin{coro}
Let $X$ and $Y$ be finite $T_0$--spaces and let $f\colon X\to Y$ be a homotopy equivalence. Then $\U(f)$ and $\F(f)$ are homotopy equivalences. If, in addition, $\U(X)\cap\F(X)=\varnothing$ and $\U(Y)\cap\F(Y)=\varnothing$ then $\C(f)$ is a homotopy equivalence.
\end{coro}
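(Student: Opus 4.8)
The plan is to build explicit homotopy inverses out of a homotopy inverse of $f$, relying on the two immediately preceding propositions (the one comparing $\U(fg)$ with $\U(f)\U(g)$, and Proposition \ref{prop_f_leq_g_Uf_leq_Ug}) together with the observation that $\U$, $\F$ and $\C$ send identity maps to identity maps.

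First I would fix a continuous map $g\colon Y\to X$ with $gf\simeq\id_X$ and $fg\simeq\id_Y$, and record the elementary fact that $\U(\id_X)=\id_{\U(X)}$: for $C\in\U(X)$ one has $\U(\id_X)(C)=\min\{D\in\U(X)\mid C\subseteq D\}=C$; likewise $\F(\id_X)=\id_{\F(X)}$, and, whenever $\U(X)\cap\F(X)=\varnothing$, also $\C(\id_X)=\id_{\C(X)}$ (and similarly for $Y$). Then, for $\U$, the proposition on composites gives $\U(gf)\leq\U(g)\U(f)$, so these two maps are comparable and hence homotopic; combining with Proposition \ref{prop_f_leq_g_Uf_leq_Ug}(c), which yields $\U(gf)\simeq\U(\id_X)=\id_{\U(X)}$, we obtain $\U(g)\U(f)\simeq\id_{\U(X)}$. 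Symmetrically $\U(f)\U(g)\simeq\U(fg)\simeq\id_{\U(Y)}$, so $\U(g)$ is a homotopy inverse of $\U(f)$. The argument for $\F(f)$ is identical, now using $\F(gf)\geq\F(g)\F(f)$ (again the two sides are comparable, hence homotopic) together with Proposition \ref{prop_f_leq_g_Uf_leq_Ug}(c).

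Finally, under the hypotheses $\U(X)\cap\F(X)=\varnothing$ and $\U(Y)\cap\F(Y)=\varnothing$, all of $\C(f)$, $\C(g)$, $\C(gf)$, $\C(fg)$, $\C(\id_X)$ and $\C(\id_Y)$ are defined. The proposition on composites then gives $\C(gf)\simeq\C(g)\C(f)$ and $\C(fg)\simeq\C(f)\C(g)$, while Proposition \ref{prop_f_leq_g_Uf_leq_Ug}(d) gives $\C(gf)\simeq\id_{\C(X)}$ and $\C(fg)\simeq\id_{\C(Y)}$; hence $\C(g)$ is a homotopy inverse of $\C(f)$.

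I do not expect a genuine obstacle: the mathematical content lies entirely in the earlier results. The only point requiring care is the bookkeeping — verifying that the disjointness hypotheses needed to define $\C(gf)$ and $\C(fg)$ are precisely the ones assumed, and that $\U$, $\F$ and $\C$ respect identities — which is why I would isolate the identity statement before running the formal chain of homotopies.
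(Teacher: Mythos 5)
Your argument is correct and is exactly the intended one: the paper states this corollary without proof, as an immediate consequence of the two preceding propositions, and your chain $\U(g)\U(f)\simeq\U(gf)\simeq\U(\id_X)=\id_{\U(X)}$ (and its analogues for $\F$ and $\C$) is the standard way to fill it in. The only detail worth isolating, as you do, is that $\U$, $\F$ and $\C$ send identities to identities, which follows since $C$ itself realizes the minimum in $\min\{D\mid C\subseteq D\}$.
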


\section{Fixed points}

The following proposition and its corollaries relate the $\C$ construction to the fixed point property.

\begin{prop}
\label{prop_pf_con_ppf_implica_f_pf_general}
Let $X$ be a finite $T_0$--space and let $f\colon X\to X$ be a continuous map. Let $C\in \C(X)$ be a subspace of $X$ with the fixed point property. 
\begin{enumerate}[(a)]
\item If $C\in \U(X)$ and $\U(f)(C)\leq C$ then $f$ has a fixed point.
\item If $C\in \F(X)$ and $\F(f)(C)\geq C$ then $f$ has a fixed point.
\item If $\U(X)\cap\F(X)=\varnothing$ and $\C(f)(C)=C$, then $f$ has a fixed point.
\end{enumerate}
\end{prop}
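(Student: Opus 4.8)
The plan is to handle the three cases in parallel, using in each case that $f$ restricts (up to a small adjustment) to a self-map of the subspace $C$, which has the fixed point property. Consider first case (a): $C\in\U(X)$ with $\U(f)(C)\leq C$, i.e.\ $\U(f)(C)\subseteq C$. By definition of $\U(f)$ we have $f(C)\subseteq\U(f)(C)\subseteq C$, so $f(C)\subseteq C$ and hence $f|_C\colon C\to C$ is a continuous self-map of $C$. Since $C$ has the fixed point property, there is $x\in C$ with $f(x)=f|_C(x)=x$, so $f$ has a fixed point. Case (b) is dual: if $C\in\F(X)$ with $\F(f)(C)\geq C$, then $C\subseteq\F(f)(C)$, but also $f(C)\subseteq\F(f)(C)$ by definition of $\F(f)$; here I need to be slightly more careful, because $C\subseteq\F(f)(C)$ does not immediately give $f(C)\subseteq C$.

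The cleanest route for (b) is to argue as in proposition \ref{Uhat}: $C$ and $\F(f)(C)$ are both in $\F(X)$, so they are comparable elements of $\F(X)$ lying in the poset $(\F(X),\supseteq)$, and $C\subseteq\F(f)(C)=D$ means $D\supseteq C$, i.e.\ $C\leq D$ in $\C(X)$. Consider $F=\F(f)(C)$; by remark \ref{rem_def_de_Uf_and_Ff}, $F$ is the connected component of $F_{\lb{f(C)}}$ containing $f(C)$, so $F$ is closed in $X$ and $f(C)\subseteq F$. I would then like to find a fixed point inside $C$ itself; the trick is that $\F(f)(C)\supseteq C$ forces, after composing with the retraction $r\colon F\to C$ associated to $C$ being (a connected component of a closed set, hence) such that $C\subseteq F$ — but this retraction need not exist in general. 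A safer approach: apply proposition \ref{Uhat}-style reasoning directly. Since $f(C)\subseteq F$ and $C\subseteq F$, consider whether $f(C)\subseteq C$; if so we are done as in (a). If not, pick any $x\in C$ with $f(x)\notin C$; then $f(x)\in F-C$. Actually the honest fix is to reduce (b) to (a) by passing to opposite spaces: by proposition \ref{prop_C_op}, $\F(X)=\U(X^{\op})$ and $\C(X^{\op})=(\C(X))^{\op}$, and a continuous $f\colon X\to X$ is also continuous as a map $X^{\op}\to X^{\op}$ with $\F(f)$ corresponding to $\U(f)$ on the opposite space; the condition $\F(f)(C)\geq C$ in $\C(X)$ becomes $\U(f)(C)\leq C$ in $\C(X^{\op})$, so (b) follows from (a) applied to $X^{\op}$. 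A fixed point of $f\colon X^{\op}\to X^{\op}$ is a fixed point of $f\colon X\to X$, and $C$ has the fixed point property regardless of which of the two homeomorphic-as-sets topologies we use (the fixed point property of a finite $T_0$--space is a property of the underlying set together with either order, and is self-dual—or simply: $C$ as a subspace of $X^{\op}$ is $(C^{\op})$ which has the fixed point property iff $C$ does, by a standard fact, or directly by proposition \ref{prop_XheY_ppf} is irrelevant; the self-duality of the fixed point property for finite posets is immediate since $f$ has a fixed point on $P$ iff it does on $P^{\op}$).

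For case (c): assume $\U(X)\cap\F(X)=\varnothing$ (so $\C(f)$ is defined) and $\C(f)(C)=C$. If $C\in\U(X)$, then $\C(f)(C)=\U(f)(C)=C\leq C$, so (a) applies and $f$ has a fixed point. If $C\in\F(X)$, then $\C(f)(C)=\F(f)(C)=C\geq C$, so (b) applies. Since $C\in\C(X)=\U(X)\cup\F(X)$ and the two are disjoint, one of these holds, completing the proof.

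The main obstacle is case (b): the naive attempt to restrict $f$ to $C$ fails because $\F(f)(C)\supseteq C$ only tells us $f(C)$ lands in a \emph{larger} closed set, not in $C$ itself. The opposite-space reduction via proposition \ref{prop_C_op} is the clean way around this, so the one point I would make sure to state carefully is that the fixed point property of a finite $T_0$--space $C$ is unchanged when $C$ is regarded as a subspace of $X^{\op}$ (equivalently, that $C$ has the fixed point property iff $C^{\op}$ does), which is immediate since $x$ is a fixed point of $f$ independently of the order.
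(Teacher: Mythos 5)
Your items (a) and (c) are correct and coincide with the paper's argument. The problem is item (b), where you misread the partial order on $\F(X)$. By Definition \ref{defi_orden_en_C(X)}, for $C_1,C_2\in\F(X)$ one has $C_1\leq C_2$ if and only if $C_1\supseteq C_2$; hence the hypothesis $\F(f)(C)\geq C$ says $C\supseteq\F(f)(C)$, that is $\F(f)(C)\subseteq C$ --- the opposite of the inclusion $C\subseteq\F(f)(C)$ you wrote down. Consequently the ``obstacle'' you describe does not exist: since $f(C)\subseteq\F(f)(C)$ by definition of $\F(f)$ (it is the smallest, with respect to inclusion, element of $\F(X)$ containing $f(C)$, namely the connected component of $F_{\lb{f(C)}}$ containing $f(C)$, by \ref{rem_def_de_Uf_and_Ff}), the hypothesis gives $f(C)\subseteq\F(f)(C)\subseteq C$, so $f$ restricts to a self-map of $C$ and has a fixed point exactly as in (a). This direct argument is what the paper does; (b) is strictly parallel to (a).

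Your fallback via $X^{\op}$ is, in the end, a valid proof of (b): indeed $\F(X)=\U(X^{\op})$, $\C(X^{\op})=(\C(X))^{\op}$, and $\F(f)$ computed in $X$ agrees with $\U(f)$ computed in $X^{\op}$, so the hypothesis $\F(f)(C)\geq C$ in $\C(X)$ is literally $\U(f)(C)\leq C$ in $\C(X^{\op})$, and (a) applies together with the self-duality of the fixed point property. But observe that this reduction is only consistent with the \emph{correct} reading of the order: under your unpacking ($C\subseteq\F(f)(C)$) the translated condition would be $C\subseteq\U^{X^{\op}}(f)(C)$, which is not the hypothesis of (a). So your paragraph for (b) asserts two incompatible interpretations of the same hypothesis, and the claim that ``the naive attempt to restrict $f$ to $C$ fails'' is false. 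Once the order on $\F(X)$ is read correctly, the detour through opposite spaces is unnecessary.
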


\begin{proof}
Under the assumptions of item ($a$) we obtain that $f(C)\subseteq \U(f)(C) \subseteq C$. Hence the map $f$ has a fixed point. Item ($b$) can be proved in a similar way and item ($c$) follows immediately from ($a$) and ($b$).
\end{proof}

\begin{coro}
Let $X$ be a finite $T_0$--space. 
\begin{itemize}
\item If the space $\U(X)$ (resp. $\F(X)$) has the fixed point property and every $C\in\U(X)$ (resp. every $C\in\F(X)$) has the fixed point property then $X$ has the fixed point property.
\item If $\U(X)\cap\F(X)=\varnothing$, the space $\C(X)$ has the fixed point property and every $C\in\C(X)$ has the fixed point property then $X$ has the fixed point property.
\end{itemize}
\end{coro}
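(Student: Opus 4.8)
The plan is to reduce each assertion to Proposition~\ref{prop_pf_con_ppf_implica_f_pf_general} by exhibiting, for an arbitrary continuous self-map of $X$, a fixed point of the induced self-map on $\U(X)$, on $\F(X)$, or on $\C(X)$.

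Concretely, for the statement involving $\U(X)$: let $f\colon X\to X$ be any continuous map and consider the induced map $\U(f)\colon\U(X)\to\U(X)$, which is well defined and continuous by the results preceding this corollary (applied with $Y=X$). Since $\U(X)$ has the fixed point property, there is some $C\in\U(X)$ with $\U(f)(C)=C$; in particular $\U(f)(C)\leq C$. Now $C$ is a connected subspace of $X$ lying in $\U(X)$, hence has the fixed point property by hypothesis, so Proposition~\ref{prop_pf_con_ppf_implica_f_pf_general}(a) yields that $f$ has a fixed point. As $f$ was arbitrary, $X$ has the fixed point property. The statement involving $\F(X)$ is handled the same way, using $\F(f)\colon\F(X)\to\F(X)$, the fixed point $C$ of $\F(f)$ (so $\F(f)(C)=C\geq C$), and part~(b) of that proposition; alternatively it follows from the $\U$ version applied to $X^{\op}$ together with Proposition~\ref{prop_C_op}.

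For the statement involving $\C(X)$: the hypothesis $\U(X)\cap\F(X)=\varnothing$ ensures that for every continuous $f\colon X\to X$ the map $\C(f)\colon\C(X)\to\C(X)$ is defined and continuous. Since $\C(X)$ has the fixed point property, there is $C\in\C(X)$ with $\C(f)(C)=C$, and $C$ has the fixed point property by hypothesis, so Proposition~\ref{prop_pf_con_ppf_implica_f_pf_general}(c) applies and $f$ has a fixed point.

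There is no substantial obstacle here: the proof is a direct combination of the facts that $\U$, $\F$ and $\C$ turn self-maps into self-maps (the last requiring $\U(X)\cap\F(X)=\varnothing$) and preserve continuity, with Proposition~\ref{prop_pf_con_ppf_implica_f_pf_general}. The only point needing a moment's attention is to note that a genuine fixed point $\U(f)(C)=C$ in particular satisfies the inequality $\U(f)(C)\leq C$ demanded in part~(a) (and dually $\F(f)(C)\geq C$ in part~(b)), so the hypotheses of Proposition~\ref{prop_pf_con_ppf_implica_f_pf_general} are literally fulfilled.
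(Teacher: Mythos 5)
Your proposal is correct and follows essentially the same argument as the paper: apply the induced map $\U(f)$ (resp.\ $\F(f)$, $\C(f)$) to an arbitrary self-map $f$, use the fixed point property of $\U(X)$ (resp.\ $\F(X)$, $\C(X)$) to obtain a fixed element $C$, and conclude via Proposition \ref{prop_pf_con_ppf_implica_f_pf_general}. The paper's proof is just a terser version of yours.
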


\begin{proof}
We will prove the result in the case that $\U(X)$ has the fixed property. Let $f\colon X \to X$ be a continuous map. Then the map $\U(f)\colon\U(X)\to\U(X)$ has a fixed point. Thus $f$ has a fixed point by the previous proposition.
\end{proof}

\begin{coro}
Let $X$ be a finite $T_0$--space  and let $f\colon X\to X$ be a continuous map.
\begin{enumerate}[(a)]
\item If there exist $a\in \mxl[X]$ and $C\in \U(X)$ such that $U_a\geq C$ and $\U(f)(C) = U_a$, then $f$ has a fixed point.
\item If there exist $a\in \mnl[X]$ and $C\in \F(X)$ such that $F_a\leq C$ and $\F(f)(C) = F_a$, then $f$ has a fixed point.
\end{enumerate}
\end{coro}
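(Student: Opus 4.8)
The plan is to reduce part (a) directly to Proposition~\ref{prop_pf_con_ppf_implica_f_pf_general}(a). By Proposition~\ref{prop_mnl_mxl_de_C(X)}, every $U_a$ with $a\in\mxl[X]$ is a maximal element of $\C(X)$, so it has the fixed point property (a finite $T_0$--space with a maximum element is contractible, hence has the fixed point property; note $U_a$ has maximum $a$). Thus $U_a$ is a subspace of $X$ with the fixed point property and $U_a\in\U(X)$. The hypothesis gives $C\in\U(X)$ with $C\leq U_a$, i.e.\ $C\subseteq U_a$, so $f(C)\subseteq f(U_a)$; but I do not directly control $f(U_a)$. Instead I will apply the proposition to the subspace $U_a$ itself: I need $\U(f)(U_a)\leq U_a$. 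Now $C\leq U_a$ implies $\U(f)(C)\leq\U(f)(U_a)$ by Proposition~\ref{prop_f_leq_g_Uf_leq_Ug}... no, monotonicity is in $f$, not in the argument; rather it is continuity of $\U(f)$ that gives $\U(f)(C)\leq\U(f)(U_a)$. Since $\U(f)(C)=U_a$ by hypothesis and $U_a$ is maximal in $\C(X)$, we get $\U(f)(U_a)\geq\U(f)(C)=U_a$, forcing $\U(f)(U_a)=U_a$. Then $\U(f)(U_a)\leq U_a$ holds (with equality), and Proposition~\ref{prop_pf_con_ppf_implica_f_pf_general}(a) applied to $C'=U_a$ yields that $f$ has a fixed point.

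Let me restate the argument cleanly. Apply $\C(f)$-continuity: from $C\leq U_a$ in $\C(X)$ we get $\U(f)(C)\leq\U(f)(U_a)$ in $\U(Y)=\U(X)$. Substituting $\U(f)(C)=U_a$ gives $U_a\leq\U(f)(U_a)$. On the other hand, $\U(f)(U_a)\in\U(X)$ and $U_a$ is a maximal element of $\C(X)$ by Proposition~\ref{prop_mnl_mxl_de_C(X)}; since $\U(X)$ is a subspace of $\C(X)$ and $U_a\leq\U(f)(U_a)$ with $U_a$ maximal, we conclude $\U(f)(U_a)=U_a$. In particular $\U(f)(U_a)\leq U_a$. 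Since $U_a$ has a maximum element (namely $a$), it is contractible, hence has the fixed point property, and $U_a\in\U(X)$. By Proposition~\ref{prop_pf_con_ppf_implica_f_pf_general}(a), $f$ has a fixed point. Part (b) is entirely analogous, working with $F_a$ for $a\in\mnl[X]$, which is a minimal element of $\C(X)$ and has a minimum element $a$, hence has the fixed point property; one uses continuity of $\F(f)$ to deduce $\F(f)(F_a)\geq F_a$ from $F_a\leq C$ and $\F(f)(C)=F_a$, minimality forces equality, and Proposition~\ref{prop_pf_con_ppf_implica_f_pf_general}(b) finishes.

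The only slightly delicate point is making sure the direction of the inequalities is handled correctly: maximality of $U_a$ in $\C(X)$ must be invoked for $\U(f)(U_a)$, which lies in $\U(X)\subseteq\C(X)$, and one must remember that $C\leq U_a$ in $\C(X)$ restricts to $C\subseteq U_a$ since both are in $\U(X)$. Beyond that the proof is a short chain of applications of previously established results, and I expect no real obstacle.
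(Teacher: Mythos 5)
Your argument is correct and is essentially the paper's proof: both derive $\U(f)(U_a)\geq \U(f)(C)=U_a$ from monotonicity of $\U(f)$, invoke maximality of $U_a$ in $\C(X)$ to conclude $\U(f)(U_a)=U_a$, and then apply Proposition \ref{prop_pf_con_ppf_implica_f_pf_general} to $U_a$, which has the fixed point property because it has maximum $a$. Your write-up just makes explicit a couple of steps the paper leaves implicit.
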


\begin{proof}
From the hypotheses of item ($a$) we obtain that $\U(f)(U_a)\geq \U(f)(C) = U_a$. And since $U_a\in \mxl[\C(X)]$, $\U(f)(U_a) = U_a$. Then $f$ has a fixed point by proposition \ref{prop_pf_con_ppf_implica_f_pf_general}.

The second item follows in a similar way.
\end{proof}

\begin{prop}
Let $X$ be a finite $T_0$--space and let $f\colon X\to X$ be a continuous map. If $f$ has a fixed point, then the maps 
$\U(f)$ and $\F(f)$ have fixed points.
\end{prop}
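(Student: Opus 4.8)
The plan is to use the fixed point of $f$ to single out an invariant subspace of $\U(X)$ (and, dually, of $\F(X)$) that is contractible, so that the restricted map must have a fixed point.

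Suppose $f(x)=x$ for some $x\in X$. First I would consider the subset $S=\{C\in\U(X)\mid x\in C\}$ of $\U(X)$, viewed as a subspace of $\U(X)$. It is non-empty: since $X$ is finite there is some $a\in\mxl[X]$ with $x\le a$, hence $x\in U_a$ and $U_a\in\U(X)$ by Remark~\ref{rem_cc_of_U_F_are_open_closed}. Applying Proposition~\ref{prop_conexo_minimal}(a) to the connected non-empty subspace $\{x\}$ shows that $S$ has a minimum element, namely $\CU(x)$. A finite $T_0$--space with a minimum element is contractible, so $S$ has the fixed point property by Proposition~\ref{prop_XheY_ppf}.

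Next I would check that $\U(f)(S)\subseteq S$. If $C\in S$, then $\U(f)(C)\in\U(X)$ by definition of $\U(f)$, and since $x=f(x)\in f(C)\subseteq\U(f)(C)$ we get $x\in\U(f)(C)$; thus $\U(f)(C)\in S$. As $\U(f)$ is continuous, it restricts to a continuous map $S\to S$, which must have a fixed point since $S$ has the fixed point property. Hence $\U(f)$ has a fixed point, that is, there is $C\in S\subseteq\U(X)$ with $\U(f)(C)=C$.

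For $\F(f)$ I would argue dually: the subspace $S'=\{C\in\F(X)\mid x\in C\}$ of $\F(X)$ is non-empty and, by Proposition~\ref{prop_conexo_minimal}(b), has a maximum element $\CF(x)$, so it is contractible and has the fixed point property; moreover $\F(f)(S')\subseteq S'$, because $x=f(x)\in f(C)\subseteq\F(f)(C)$ whenever $x\in C$. Therefore $\F(f)$ restricts to a self-map of $S'$ which has a fixed point, and this is a fixed point of $\F(f)$. (Alternatively, once the statement is proved for $\U$, the statement for $\F$ follows by applying it to the continuous map $f\colon X^{\op}\to X^{\op}$ and using Proposition~\ref{prop_C_op}.) There is no serious obstacle here: the only points that require a moment's care are that $S$ is closed under $\U(f)$ and that it has a minimum element; once these are in place, the conclusion is immediate from the fact that a finite $T_0$--space with a minimum (or maximum) element has the fixed point property.
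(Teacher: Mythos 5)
Your proof is correct and rests on the same key observation as the paper's: if $f(x)=x$ and $x\in C$ then $x\in\U(f)(C)$, so the minimum $\CU(x)$ of $\{C\in\U(X)\mid x\in C\}$ satisfies $\CU(x)\leq\U(f)(\CU(x))$. The paper concludes directly from this comparability that $\U(f)$ has a fixed point, whereas you take the slightly longer route of restricting $\U(f)$ to the invariant contractible subspace $S$; both are valid and essentially the same argument.
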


\begin{proof}
Let $x\in X$ be such that $f(x) = x$. Let $C_0=\min\{C\in\U(X) \mid x\in C\}$. Note that this minimum element exists by proposition \ref{prop_conexo_minimal}. It follows that $x = f(x)\in f(C_0)\subseteq \U(f)(C_0)$. Then $C_0\leq \U(f)(C_0)$. Hence, $\U(f)$ has a fixed point.
 
In a similar way it can be proved that the map $\F(f)$ has a fixed point.
\end{proof}

The following example shows that the converse of the previous proposition does not hold.

\begin{ex} \label{ex_2}
Let $X$ be the finite $T_0$--space whose underlying set is $\{0,1,2,3,4,5\}$ and whose topology is generated by the basis $\{\{0\},\{1\},\{0,1,2\},\{0,1,3\},\{0,1,2,3,4\},\{0,1,2,3,5\}\}$. The Hase diagrams of $X$ and $\U(X)$ are shown below.

\begin{center}
\begin{tikzpicture}
\tikzstyle{every node}=[font=\scriptsize]
\foreach \x in {0,1} \draw (\x,0) node(\x){$\bullet$} node[below=1]{\x}; 
\draw (0,1) node (2){$\bullet$} node[left=1]{2};
\draw (1,1) node (3){$\bullet$} node[right=1]{3};
\foreach \x in {4,5} \draw (\x-4,2) node(\x){$\bullet$} node[above=1]{\x}; 
  
\foreach \x in {2,3} \draw (0)--(\x);
\foreach \x in {2,3} \draw (1)--(\x);
\foreach \x in {4,5} \draw (2)--(\x);
\foreach \x in {4,5} \draw (3)--(\x);
\draw (-1,2) node(X){\normalsize $X$};
\end{tikzpicture}
\hspace*{2cm}
\begin{tikzpicture}[x=2.3cm, y=2cm]
\path node (U4) at (0,1) [CC] {4\\2 3\\0 1}
 node (U5) at (1,1) [CC] {5\\2 3\\ 0 1}
 node (U45) at (0.5,0) [CC] {2 3\\0 1};
 
\draw [-] (U45) to (U4); 
\draw [-] (U45) to (U5);
\draw (-0.8,1) node(UX){$\U(X)$};
\end{tikzpicture}
\end{center}

Let $f\colon X\to X$ be the continuous map defined by
\begin{center}
\begin{tabular}{c | c c c c c c }
$x$ & 0 & 1 & 2 & 3& 4 & 5 \\[-2pt]
\hline
$f(x)$& 1 & 0 & 3 & 2& 5 & 4 
\end{tabular}
\end{center}
Clearly $\U(X)$ has the fixed point property and hence the map $\U(f)$ has a fixed point. In a similar way, $\F(f)$ has a fixed point. However, the map $f$ does not have fixed points.
\end{ex}

\section{Examples}

In this section we give several examples of application which show that the tools of section \ref{section-C_constr} are useful to study the fixed point property of finite posets.

First we give the following lemmas.

\begin{lemma}
\label{lema_para_Q4_Q5}
 Let $X$ the finite $T_0$--space given by the following Hasse diagram.
 \[
 \begin{tikzpicture}
 \tikzstyle{every node}=[font=\scriptsize]
  
  \foreach \x in {0,1,2} \draw (\x,0) node(\x){$\bullet$} node[below=1]{\x}; 
  \foreach \x in {3,4,5} \draw (\x-3,1) node(\x){$\bullet$} node[right=1]{\x}; 
  \foreach \x in {6,7,8} \draw (\x-6,2) node(\x){$\bullet$} node[above=1]{\x}; 
  
  \foreach \x in {3,4} \draw (0)--(\x);
  \foreach \x in {3,5} \draw (1)--(\x);
  \foreach \x in {4,5} \draw (2)--(\x);
 
  \foreach \x in {6,7} \draw (3)--(\x);
  \foreach \x in {6,8} \draw (4)--(\x);
  \foreach \x in {7,8} \draw (5)--(\x);
 \end{tikzpicture}
 \]
 
If $f\colon X\to X$ is a continuous map without fixed points, then $f(\{3,4,5\})=\{3,4,5\}$.
\end{lemma}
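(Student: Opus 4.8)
The plan is to exploit the elementary fact recalled just before Proposition~\ref{Uhat}: since $f$ is fixed-point-free, $x$ and $f(x)$ must be incomparable for every $x\in X$. First I would compute the minimal open sets $U_x$ and minimal closed sets $F_x$ for the nine points and read off, from this incomparability, the candidate set for each value. For the minimal points one gets $f(0)\in\{1,2,5\}$, $f(1)\in\{0,2,4\}$, $f(2)\in\{0,1,3\}$ (the value must avoid $F_x$); for the maximal points $f(6)\in\{5,7,8\}$, $f(7)\in\{4,6,8\}$, $f(8)\in\{3,6,7\}$ (it must avoid $U_x$); and for the middle points $f(3)\in\{2,4,5,8\}$, $f(4)\in\{1,3,5,7\}$, $f(5)\in\{0,3,4,6\}$ (it must avoid $U_x\cup F_x$). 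All subsequent statements will be proved for an arbitrary fixed-point-free continuous self-map $g$ of $X$, so that they may be transported along symmetries; specialising to $g=f$ at the end gives the lemma.

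Next I would record two symmetries of $X$ visible in the Hasse diagram: the automorphism $\rho$ with cycle structure $(0\,1\,2)(3\,5\,4)(6\,7\,8)$, and the anti-automorphism $\phi$ interchanging $0\leftrightarrow 6$, $1\leftrightarrow 7$, $2\leftrightarrow 8$ and fixing $3,4,5$ (both easily checked). If $g\colon X\to X$ is continuous and fixed-point-free then so are $\rho g\rho^{-1}$ and $\rho^{-1}g\rho$ (conjugates by an automorphism), and also $\phi g\phi^{-1}$, since conjugating an order-preserving map by an order-reversing bijection again gives an order-preserving map; moreover a fixed point of any of these three maps would produce one of $g$, and $\rho$ and $\phi$ both preserve the set $\{3,4,5\}$.

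The heart of the argument is the claim that $g(3)\neq 2$ for every fixed-point-free continuous $g$. Assuming $g(3)=2$: since $0<3$, $1<3$ and $U_2=\{2\}$, this forces $g(0)=g(1)=2$; then $0<4$ and $1<5$ give $g(4)\in\{5,7\}$ and $g(5)\in\{4,6\}$ (intersect the candidate sets with $F_2$), after which $4<8$ forces $g(8)=7$ (the candidate set $\{3,6,7\}$ for $g(8)$ meets $F_{g(4)}$ only in $7$, whether $g(4)$ is $5$ or $7$); but then $5<8$ demands $g(5)\le 7$, while neither $4$ nor $6$ lies in $U_7$ --- a contradiction. Applying this claim to $\phi g\phi^{-1}$, whose value at $3=\phi(3)$ is $\phi(g(3))$, yields $g(3)\neq 8$, hence $g(3)\in\{4,5\}$; applying \emph{that} to $\rho g\rho^{-1}$ and to $\rho^{-1}g\rho$, whose values at $3$ are $\rho(g(4))$ and $\rho^{-1}(g(5))$, yields $g(4)\in\{3,5\}$ and $g(5)\in\{3,4\}$. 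In particular $g(\{3,4,5\})\subseteq\{3,4,5\}$.

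It remains to rule out that $S:=g(\{3,4,5\})$ is a proper subset of $\{3,4,5\}$. A single-element $S$ is impossible since $\{4,5\}\cap\{3,5\}\cap\{3,4\}=\varnothing$. If $S=\{4,5\}$ then, as $3\notin S$, one must have $g(4)=5$ and $g(5)=4$; but $4<8$ then forces $g(8)=7$ and $5<8$ forces $7\ge g(5)=4$, which is false. Finally, conjugating $g$ by $\rho$ turns the case $S=\{3,5\}$ into the case $S=\{4,5\}$, and conjugating by $\rho^{-1}$ turns $S=\{3,4\}$ into $S=\{4,5\}$, so both are excluded as well. Hence $S=\{3,4,5\}$, and taking $g=f$ proves the lemma. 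The only real difficulty is bookkeeping: tracking the nine candidate sets and the sets $U_x$, $F_x$ through the $g(3)=2$ computation, and checking carefully that conjugation by $\rho$ and $\phi$ moves values exactly as claimed; once the candidate sets are in hand, each contradiction closes in two or three order comparisons.
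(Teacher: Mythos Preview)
Your proof is correct, but it takes a quite different route from the paper's. The paper argues in two short strokes: first, it shows $f(\{3,4,5\})\subseteq X-\mxl[X]$ by observing that if some middle element $x$ had $f(x)\in\mxl[X]$, then two distinct maximal elements $a,b$ would collapse to $f(x)$, and since $\widehat U_{f(x)}\subseteq U_a\cup U_b$ one gets $f(\widehat U_{f(x)})\subseteq U_{f(x)}$; as $\widehat U_{f(x)}$ is contractible, Proposition~\ref{Uhat} yields a fixed point. Second, the lower six elements $\{0,1,2,3,4,5\}$ form a $6$--crown, so by Proposition~\ref{prop_crown} the restriction $f|$ must be a bijection (hence an automorphism, hence level-preserving), giving $f(\{3,4,5\})=\{3,4,5\}$ directly.

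Your approach avoids both Proposition~\ref{Uhat} and Proposition~\ref{prop_crown} entirely, working instead from the incomparability constraint and the symmetry group of $X$ (including the anti-automorphism $\phi$). This is more elementary in that it uses nothing beyond order-preservation and incomparability, and the symmetry reduction is elegant; the price is a longer case analysis and more bookkeeping. The paper's argument, by contrast, leverages the crown structure to get bijectivity for free, which both shortens the proof and makes the surjectivity of $f|_{\{3,4,5\}}$ automatic---whereas you need a separate step to rule out proper images.
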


\begin{proof}
We will prove first that $f(\{3,4,5\})\subseteq X-\mxl[X]$. Let $x\in \{3,4,5\}$ and suppose that $f(x)\in \mxl[X]$. Then $f(F_x)=\{f(x)\}$ and hence there exist $a,b\in \mxl[X]$ such that $a\neq b$ and $f(a)=f(b)=f(x)$. Observe that $\{0,1,2,3,4,5\}\subseteq U_a \cup U_b$. Hence 
\[f(\widehat{U}_{f(x)})\subseteq \{0,1,2,3,4,5\}\subseteq f(U_a \cup U_b) \subseteq U_{f(x)}.
 \] 
Since $\widehat{U}_{f(x)}$ is contractible, $f$ has a fixed point by \ref{Uhat} which entails a contradiction. 

Since $f(\{3,4,5\})\subseteq X-\mxl[X]$, it follows that $f(X-\mxl[X])\subseteq X-\mxl[X]$. And since $f$ does not have fixed points, $f$ is bijective in this subspace by \ref{prop_crown}. In particular $f(\{3,4,5\})=\{3,4,5\}$.
\end{proof}

\begin{lemma} \label{lema_para_Q7}
  Let $X$ the finite $T_0$--space given by the following Hasse diagram.
 \[
 \begin{tikzpicture}
 \tikzstyle{every node}=[font=\scriptsize]
  \foreach \x in {0,1,2} \draw (\x,0) node(\x){$\bullet$} node[below=1]{\x}; 
  \foreach \x in {3,4,5} \draw (\x-3,1) node(\x){$\bullet$} node[right=1]{\x}; 
  \foreach \x in {6,7,8} \draw (\x-6,2) node(\x){$\bullet$} node[above=1]{\x}; 
  
   \foreach \x in {3,4,5} \draw (0)--(\x);
   \foreach \x in {3,4,5} \draw (1)--(\x);
   \foreach \x in {3,5} \draw (2)--(\x);
 
   \foreach \x in {6,7} \draw (3)--(\x);
   \foreach \x in {6,8} \draw (4)--(\x);
   \foreach \x in {7,8} \draw (5)--(\x);
 \end{tikzpicture}
 \]
 
 Let $f\colon X\to X$ a continuous map without fixed points. Then $f(\{3,4,5\}) = \{3,4,5\}$.
\end{lemma}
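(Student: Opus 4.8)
The plan is to reduce the statement to the $6$--crown sitting on the top two levels of $X$. Write $M=\{3,4,5\}$, $T=\{6,7,8\}$ and $Q=M\cup T$; as a subspace of $X$, $Q$ carries exactly the relations $3<6$, $3<7$, $4<6$, $4<8$, $5<7$, $5<8$, so (each of $3,4,5$ lies below exactly two of $6,7,8$ and these relations form a $6$--cycle) $Q$ is a $6$--crown. First I would prove that $f(Q)\subseteq Q$. Granting this, $g:=f|_{Q}\colon Q\to Q$ is a continuous self--map of a $6$--crown with no fixed point (as $f$ has none), hence $g$ is bijective by Proposition~\ref{prop_crown}. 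Each element $m\in M$ lies below two distinct elements $t_1\neq t_2$ of $T$; if $g(m)\in T$, then $g(m)$, being maximal in $Q$ and $\leq g(t_1),g(t_2)$, would equal both $g(t_1)$ and $g(t_2)$, contradicting injectivity of $g$. Thus $g(M)\subseteq M$, and since $g$ is a bijection of the six--element set $Q$ we conclude $g(M)=M$, i.e.\ $f(\{3,4,5\})=\{3,4,5\}$.

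So the whole problem is to prove $f(Q)\subseteq Q$. Since each element of $T$ lies above an element of $M$ and the up--set in $X$ of every element of $Q$ is contained in $Q$, it suffices to show $f(M)\cap\{0,1,2\}=\varnothing$. Assume otherwise: $f(x)\in\{0,1,2\}$ for some $x\in M$. Then $f(x)$ is a minimal point of $X$, so $U_{f(x)}=\{f(x)\}$; since $f$ is order--preserving we have $f(U_x)\subseteq U_{f(x)}$, and as $\{0,1\}\subseteq U_x$ it follows that $f$ is constant with value $f(x)$ on $U_x$, hence $f(0)=f(1)=f(x)$. If $f(x)\in\{0,1\}$ this is a fixed point of $f$, a contradiction; hence $f(x)=2$ and in particular $f(0)=2$.

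Finally I would rule out $f(0)=2$. From $0<3$, $0<4$, $0<5$ we get $f(3),f(4),f(5)\geq 2$, i.e.\ $f(M)\subseteq F_2$; combined with $6>3$, $7>3$, $8>4$ this gives $f(6),f(7),f(8)\geq 2$, so $f(Q)\subseteq F_2$ and in particular $f(\widehat{F}_2)\subseteq F_2$. But $\widehat{F}_2=\{3,5,6,7,8\}$, which carries the relations $3<6$, $3<7$, $5<7$, $5<8$, is contractible: the only element of $\widehat{F}_2$ below $6$ is $3$, so $6$ is a down beat point; after deleting it the only element below $8$ is $5$, so $8$ is a down beat point; and deleting it leaves $\{3,5,7\}$, which has maximum $7$. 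Hence $\widehat{F}_2$ has the fixed point property, so Proposition~\ref{Uhat} produces a fixed point of $f$ --- a contradiction. This proves $f(M)\cap\{0,1,2\}=\varnothing$, hence $f(Q)\subseteq Q$, completing the argument.

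The main obstacle, I expect, is the case $f(x)=2$: one must notice that the collapse forced by $f(0)=2$ drives all of $Q$ into the closed set $F_2$, and that $\widehat{F}_2$ --- not obviously contractible from its Hasse diagram --- collapses to a point after two beat--point removals, so that Proposition~\ref{Uhat} can be invoked. I would also point out that the dual strategy used for the previous lemma --- first showing $f(M)\cap T=\varnothing$ and then exploiting a crown on the bottom two levels --- does not work here, since $\{0,1,2\}\cup M$ is not a crown and none of $\widehat{U}_6$, $\widehat{U}_7$, $\widehat{U}_8$ has the fixed point property; it is the up/down asymmetry of $X$ that makes the argument above the natural one.
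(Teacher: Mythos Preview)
Your proof is correct and rests on the same two pillars as the paper's: the contractibility of $\widehat F_2$ (to invoke Proposition~\ref{Uhat}) and the fact that $Q=\{3,4,5,6,7,8\}$ is a $6$--crown (to invoke Proposition~\ref{prop_crown}). The difference is purely in the entry point. The paper starts at the bottom: since $0$ and $1$ are comparable to every element of $Q$, a fixed-point-free $f$ must send each of them into $\{0,1,2\}\setminus\{x\}$; the value $2$ is ruled out directly via $\widehat F_2\subseteq F_x$ and $f(F_x)\subseteq F_2$, forcing $f(0)=1$, $f(1)=0$, whence $f(F_{\{0,1\}})\subseteq F_{\{0,1\}}=Q$ is immediate. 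You instead start from the middle level $M$, reduce the bad case to $f(0)=2$, and then push all of $f(Q)$ into $F_2$ before applying Proposition~\ref{Uhat}. The paper's route is slightly shorter --- it never needs the intermediate step $f(Q)\subseteq F_2$, only $f(\widehat F_2)\subseteq f(F_x)\subseteq F_2$ --- but your version has the virtue of spelling out the beat-point collapse showing $\widehat F_2$ is contractible, which the paper simply asserts. Your closing contrast with Lemma~\ref{lema_para_Q4_Q5} is accurate: the asymmetry at the bottom level is exactly what forces the argument to run through $\widehat F_2$ rather than through some $\widehat U_a$.
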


\begin{proof}
Let $x\in \{0,1\}$. Since $f$ does not have fixed points, $f(x)$ is not comparable with $x$. Hence $f(x)\in \{0,1,2\}-\{x\}$. In addition, note that $\widehat F_2\subseteq F_x$. Then, if $f(x) = 2$, it follows that $f(\widehat F_2)\subseteq f(F_x) \subseteq F_2$, with $\widehat F_2$ contractible. Hence by \ref{Uhat}, $f$ has a fixed point, which entails a contradiction. Then $f(0)=1$ and $f(1)=0$. It follows that $f(F_{0,1})\subseteq F_{0,1}$. In addition, as $f$ does not have fixed points, the restriction $f|\colon F_{0,1} \to F_{0,1}$ is bijective by \ref{prop_crown}. In particular, $f(\{3,4,5\}) = \{3,4,5\}$.
\end{proof}

Now we are ready to give the examples.

\begin{figure}
\begin{subfigure}[b]{0.25\textwidth}
\centering
\begin{tikzpicture}
\tikzstyle{every node}=[font=\scriptsize]
\foreach \x in {0,1,2} \draw (\x,0) node(\x){$\bullet$} node[below=1]{\x}; 
\foreach \x in {3,4,5} \draw (\x-3,1) node(\x){$\bullet$} node[right=1]{\x}; 
\foreach \x in {6,7} \draw (\x-5.5,2) node(\x){$\bullet$} node[right=1]{\x}; 
\foreach \x in {8,9,10} \draw (\x-8,3) node(\x){$\bullet$} node[above=1]{\x}; 
  
\foreach \x in {3,4} \draw (0)--(\x);
\foreach \x in {3,5} \draw (1)--(\x);
\foreach \x in {4,5} \draw (2)--(\x);
\foreach \x in {8,10} \draw (3)--(\x);
\foreach \x in {6,7} \draw (4)--(\x);
\foreach \x in {6,7} \draw (5)--(\x);
\foreach \x in {8,9} \draw (6)--(\x);
\foreach \x in {9,10} \draw (7)--(\x);
\end{tikzpicture}
\caption{$P^{3323}$}
\label{fig-Q8}
\end{subfigure}
\begin{subfigure}[b]{0.35\textwidth}
\centering
\begin{tikzpicture}
\tikzstyle{every node}=[font=\scriptsize]
\foreach \x in {0,1,2} \draw (\x,-0.2) node(\x){$\bullet$} node[below=1]{\x}; 
\foreach \x in {3,4,5,6,7} \draw (\x-4,1) node(\x){$\bullet$} node[right=1]{\x}; 
\foreach \x in {8,9,10} \draw (\x-8,2.2) node(\x){$\bullet$} node[above=1]{\x}; 
  
\foreach \x in {3,4,5} \draw (0)--(\x);
\foreach \x in {3,4,6,7} \draw (1)--(\x);
\foreach \x in {5,6,7} \draw (2)--(\x);

\foreach \x in {3,4,6,7} \draw (8)--(\x);
\foreach \x in {3,5,7} \draw (9)--(\x);
\foreach \x in {4,5,6} \draw (10)--(\x);
\end{tikzpicture}
\caption{$P^{343}_1$}
\label{fig-Q6}
\end{subfigure}
\begin{subfigure}[b]{0.35\textwidth}
\begin{tikzpicture}
\tikzstyle{every node}=[font=\scriptsize]
\foreach \x in {0,1,2} \draw (\x,-0.2) node(\x){$\bullet$} node[below=1]{\x}; 
\foreach \x in {3,4,5,6,7} \draw (\x-4,1) node(\x){$\bullet$} node[right=1]{\x}; 
\foreach \x in {8,9,10} \draw (\x-8,2.2) node(\x){$\bullet$} node[above=1]{\x}; 
  
\foreach \x in {3,4,5} \draw (0)--(\x);
\foreach \x in {3,4,6,7} \draw (1)--(\x);
\foreach \x in {5,6,7} \draw (2)--(\x);

\foreach \x in {3,4,6} \draw (8)--(\x);
\foreach \x in {3,5,6,7} \draw (9)--(\x);
\foreach \x in {4,5,7} \draw (10)--(\x);
\end{tikzpicture}
\caption{$P^{343}_2$}
\label{fig-Q7}
\end{subfigure}
\caption{}
\end{figure}

\begin{ex}
\label{ejm_Q8}
We will prove that the space $P^{3323}$ of \cite[Fig.1]{Schroder1993fpp11} has the fixed point property. Its Hasse diagram is given in figure \ref{fig-Q8}. The Hasse diagrams of $\U(P^{3323})$ and $\F(P^{3323})$ are shown in figure \ref{fig-CQ8} and the shaded elements of these diagrams are contractible subspaces of $P^{3323}$.

\begin{figure}
\begin{subfigure}[b]{0.48\textwidth}
\centering
\begin{tikzpicture}[x=2.3cm, y=2cm]
\path node[CCC] (U8) at (0,6) { 8\\ 6\\ 3 4 5\\ 0 1 2}
node[CCC] (U9) at (1,6) { 9\\ 6 7\\ 4 5\\ 0 1 2}
node (U10) at (2,6) [CCC] { 10\\ 7\\ 3 4 5\\ 0 1 2}

node (U89) at (0,5) [CCC] { 6\\ 4 5\\ 0 1 2} 
node (U810) at (1,5) [CC] { 3 4 5\\ 0 1 2} 
node (U910) at (2,5) [CCC] { 7\\ 4 5\\ 0 1 2}

node (U8910) at (1,4) [CCC] { 4 5\\ 0 1 2};

\draw [-] (U8) to (U810) to (U10) to (U910) to (U9) to (U89) to (U8);
\draw [-] (U8910) to (U89);
\draw [-] (U8910) to (U810);
\draw [-] (U8910) to (U910);
\end{tikzpicture}
\caption{$\U(P^{3323})$}
\label{fig-UQ8}
\end{subfigure}
\begin{subfigure}[b]{0.48\textwidth}
\centering
\begin{tikzpicture}[x=2.3cm, y=2cm]
\path node (F012) at (1,3) [CCC] { 8 9 10\\ 6 7}

node (F01) at (0,2) [CC] { 8 9 10\\ 6 7\\ 3} 
node (F02) at (1,2) [CCC] { 8 9 10\\ 6 7\\ 4} 
node (F12) at (2,2) [CCC] { 8 9 10\\ 6 7\\ 5}

node (F0) at (0,1) [CCC] { 8 9 10\\ 6 7\\ 3 4\\ 0} 
node (F1) at (1,1) [CCC] { 8 9 10\\ 6 7\\ 3 5\\ 1} 
node (F2) at (2,1) [CCC] { 8 9 10\\ 6 7\\ 4 5\\ 2};

\draw[-] (F012) to (F01);
\draw[-] (F012) to (F02);
\draw[-] (F012) to (F12);
\draw[-] (F0) to (F02) to (F2) to (F12) to (F1) to (F01) to (F0);

\end{tikzpicture}
\caption{$\F(P^{3323})$}
\label{fig-FQ8}
\end{subfigure}
\caption{\!}
\label{fig-CQ8}
\end{figure}

Suppose that there exists a continuous map $f\colon P^{3323}\to P^{3323}$ without fixed points. 

Let $C_1=\{0,1,2,3,4,5\}\in \U(X)$. Since the poset $\U(P^{3323})$ has a minimum element, the map $\U(f)\colon \U(P^{3323})\to \U(P^{3323})$ has a fixed point, which must be $C_1$ by \ref{prop_pf_con_ppf_implica_f_pf_general}. Thus, $f(C_1)\subseteq \U(f)(C_1)=C_1$.

Let $C_2=\{3,6,7,8,9,10\}\in \F(P^{3323})$. Proceeding as in the previous paragraph we obtain that $C_2$ is a fixed point of $\F(f)$. Thus, $f(C_2)\subseteq \F(f)(C_2)=C_2$.

Hence, $f(C_1\cap C_2)\subseteq C_1\cap C_2$ and hence $3$ is a fixed point of $f$, which entails a contradiction.

Therefore, the space $P^{3323}$ has the fixed point property.
\end{ex}

\begin{ex}
We will prove that the space $P^{343}_1$ given in \cite[Fig.1]{Schroder1993fpp11} has the fixed point property. Its Hasse diagram is given in figure \ref{fig-Q6}. The Hasse diagram of $\C(P^{343}_1)$ is shown in figure \ref{fig-CQ6}.

\begin{figure}
\begin{subfigure}[c]{0.48\textwidth}
\centering
\begin{tikzpicture}[x=2.3cm, y=2cm]
\path node[CCC] (U8) at (0,6)  { 8\\ 3 4 6 7\\ 0 1 2}
node (U9) at (1,6) [CCC] { 9\\ 3 5 7\\ 0 1 2}
node (U10) at (2,6) [CCC] { 10\\ 4 5 6\\ 0 1 2}

node (U89) at (0,5) [CCC] { 3 7\\ 0 1 2} 
node (U810) at (1,5) [CCC] { 4 6\\ 0 1 2} 
node (U910a) at (2,5) [CCC] { 5\\ 0 2}

node (U8910a) at (0.2,4) [CCC] { 0}
node (U8910b) at (1,4) [CCC] { 1}
node (U8910c) at (1.8,4) [CCC] { 2};

\draw [-] (U8) to (U810) to (U10) to (U910a) to (U9) to (U89) to (U8);

\foreach \x in {U89, U810, U910a} \draw (U8910a)--(\x);
\foreach \x in {U89, U810} \draw (U8910b)--(\x);
\foreach \x in {U89, U910a, U810} \draw (U8910c)--(\x);

\path node (F012a) at (0.2,3) [CCC] { 8}
node (F012b) at (1,3) [CCC] { 9}
node (F012c) at (1.8,3) [CCC] { 10}

node (F01) at (0,2) [CCC] { 8 9 10\\ 3 4} 
node (F02a) at (1,2) [CCC] { 9 10\\ 5} 
node (F12) at (2,2) [CCC] { 8 9 10\\ 6 7}

node (F0) at (0,1) [CCC] { 8 9 10\\ 3 4 5\\ 0} 
node (F1) at (1,1) [CCC] { 8 9 10\\ 3 4 6 7\\ 1} 
node (F2) at (2,1) [CCC] { 8 9 10\\ 5 6 7\\ 2};

\foreach \x in {F01, F12} \draw (F012a)--(\x);
\foreach \x in {F01, F02a, F12} \draw (F012b)--(\x);
\foreach \x in {F01, F02a, F12} \draw (F012c)--(\x);
\draw[-] (F0) to (F02a) to (F2) to (F12) to (F1) to (F01) to (F0);

\draw[-, red] (U810) to (F01) to (U89) to (F12) to (U810);
\draw[-, red] (F02a) to (U910a);
\draw[-, blue, bend left] (U8) to (F012a);
\draw[-, blue, bend left] (U9) to (F012b);
\draw[-, blue, bend left] (U10) to (F012c);

\draw[-, blue, bend left] (F0) to (U8910a);
\draw[-, blue, bend left] (F1) to (U8910b);
\draw[-, blue, bend left] (F2) to (U8910c);
\end{tikzpicture}
\caption{$\C(P^{343}_1)$}
\label{fig-CQ6}
\end{subfigure}
\begin{subfigure}[c]{0.48\textwidth}
\centering
\begin{tikzpicture}[x=2.3cm, y=2cm]
\path node (U8) at (0,6) [CCC] { 8\\ 3 4 6\\ 0 1 2}
node (U9) at (1,6) [CCC] { 9 \\ 3 5 6 7\\ 0 1 2}
node (U10) at (2,6) [CCC] { 10\\ 4 5 7\\ 0 1 2}

node (U89) at (0,5) [CCC, label=left:$A$] { 3 6\\ 0 1 2} 
node (U810a) at (1,5) [CCC, label=left:$B$] { 4\\ 0 1} 
node (U910) at (2,5) [CCC, label=left:$C$] { 5 7\\ 0 1 2}

node (U8910a) at (0.2,4) [CCC] { 0}
node (U8910b) at (1,4) [CCC] { 1}
node (U8910c) at (1.8,4) [CCC] { 2};

\draw [-] (U8) to (U810a) to (U10) to (U910) to (U9) to (U89) to (U8);

\foreach \x in {U89, U810a, U910} \draw (U8910a)--(\x);
\foreach \x in {U89, U810a, U910} \draw (U8910b)--(\x);
\foreach \x in {U89, U910} \draw (U8910c)--(\x);

\path node (F012a) at (0.2,3) [CCC] {8}
node (F012b) at (1,3) [CCC] {9}
node (F012c) at (1.8,3) [CCC] {10}

node (F01) at (0,2) [shape=circle, draw, fill=black!10, label=right:$A'$, align=center, font=\scriptsize] {8 9 10\\3 4} 
node (F02a) at (1,2) [shape=circle, draw, fill=black!10, label=right:$B'$, align=center, font=\scriptsize] {9 10\\5} 
node (F12) at (2,2) [shape=circle, draw, fill=black!10, label=right:$C'$,  align=center, font=\scriptsize] {8 9 10\\6 7}

node (F0) at (0,1) [CCC] {8 9 10\\3 4 5\\0} 
node (F1) at (1,1) [CCC] {8 9 10\\3 4 6 7\\1} 
node (F2) at (2,1) [CCC] {8 9 10\\5 6 7\\2};

\foreach \x in {F01, F12} \draw (F012a)--(\x);
\foreach \x in {F01, F02a, F12} \draw (F012b)--(\x);
\foreach \x in {F01, F02a, F12} \draw (F012c)--(\x);
\draw[-] (F0) to (F02a) to (F2) to (F12) to (F1) to (F01) to (F0);

\draw[-, red] (U810a) to (F01) to (U89) to (F12) to (U910) to (F02a);
\draw[-, blue, bend left] (U8) to (F012a);
\draw[-, blue, bend left] (U9) to (F012b);
\draw[-, blue, bend left] (U10) to (F012c);

\draw[-, blue, bend left] (F0) to (U8910a);
\draw[-, blue, bend left] (F1) to (U8910b);
\draw[-, blue, bend left] (F2) to (U8910c);
\end{tikzpicture}
\caption{$\C(P^{343}_2)$}
\label{fig-CQ7}
\end{subfigure}
\caption{}
\end{figure}

We will prove first that $\C(P^{343}_1)$ is homotopy equivalent to $P^{3323}$. Note that the minimal elements of $\U(P^{343}_1)$ are down beat points of $\C(P^{343}_1)$ and that the maximal elements of $\F(P^{343}_1)$ are up beat points of $\C(P^{343}_1)$. Then $\C(P^{343}_1)$ is homotopy equivalent to the finite $T_0$--space that is given by the Hasse diagram of figure \ref{fig-step-1}. Now observe that the element labeled $a$ in this figure is a down beat point, and by removing it we obtain the space given in figure \ref{fig-step-2}, which is homeomorphic to $P^{3323}$. Therefore $\C(P^{343}_1)$ is homotopy equivalent to $P^{3323}$.

\begin{figure}
\begin{subfigure}[c]{0.4\textwidth}
\centering
\begin{tikzpicture}
 \tikzstyle{every node}=[font=\scriptsize]
  \foreach \x in {0,1,2} \draw (\x,0) node(\x){$\bullet$}; 
  \foreach \x in {3,4,5} \draw (\x-3,1) node(\x){$\bullet$}; 

  \foreach \x in {9,10} \draw (\x-9,3) node(\x){$\bullet$};
  \draw (2,3) node(11){$\bullet$} node[right=1]{{\normalsize $a$}};
  \foreach \x in {12,13,14} \draw (\x-12,4) node(\x){$\bullet$}; 

  \foreach \x in {3,4} \draw (0)--(\x);
  \foreach \x in {3,5} \draw (1)--(\x);
  \foreach \x in {4,5} \draw (2)--(\x);

  \foreach \x in {12,13} \draw (9)--(\x);
  \foreach \x in {12,14} \draw (10)--(\x);
  \foreach \x in {13,14} \draw (11)--(\x);

  \draw[-, red] (3) to (9) to (5) to (10) to (3);
  \draw[-, red] (4) to (11);
\end{tikzpicture}
\caption{}
\label{fig-step-1}
\end{subfigure}
\begin{subfigure}[c]{0.4\textwidth}
\centering
\begin{tikzpicture}
  \tikzstyle{every node}=[font=\scriptsize]
  \foreach \x in {0,1,2} \draw (\x,0) node(\x){$\bullet$}; 
  \foreach \x in {3,4,5} \draw (\x-3,1) node(\x){$\bullet$}; 

  \foreach \x in {9,10} \draw (\x-9,3) node(\x){$\bullet$};
  \foreach \x in {12,13,14} \draw (\x-12,4) node(\x){$\bullet$}; 

  \foreach \x in {3,4} \draw (0)--(\x);
  \foreach \x in {3,5} \draw (1)--(\x);
  \foreach \x in {4,5} \draw (2)--(\x);

  \foreach \x in {12,13} \draw (9)--(\x);
  \foreach \x in {12,14} \draw (10)--(\x);

  \draw[-, red] (3) to (9) to (5) to (10) to (3);
  \draw[-,bend right] (4) to (13);
  \draw[-,bend right] (4) to (14);
\end{tikzpicture}
\caption{}
\label{fig-step-2}
\end{subfigure}
\caption{}
\end{figure}

Now, let $f\colon P^{343}_1\to P^{343}_1$ be a continuous map. In example \ref{ejm_Q8} we proved that $P^{3323}$ has the fixed point property. Hence, by \ref{prop_XheY_ppf}, $\C(P^{343}_1)$ has the fixed point property. Thus, the map $\C(f)$ has a fixed point. And since all the elements of $\C(P^{343}_1)$ are contractible subspaces of $P^{343}_1$, applying proposition \ref{prop_pf_con_ppf_implica_f_pf_general}, we obtain that $f$ has a fixed point. Therefore $P^{343}_1$ has the fixed point property.
\end{ex}

\begin{ex}
We will prove that the space $P^{343}_2$ given in \cite[Fig.1]{Schroder1993fpp11} has the fixed point property. Its Hasse diagram is given in figure \ref{fig-Q7}. The Hasse diagram of $\C(P^{343}_2)$ is shown in figure \ref{fig-CQ7}. Let $A$, $B$, $C$, $A'$, $B'$ and $C'$ be the sets indicated in that figure.

Suppose that $f\colon P^{343}_2\to P^{343}_2$ is a continuous map without fixed points. Since all the elements of $\U(P^{343}_2)$ are contractible subspaces of $P^{343}_2$, then the map $\U(f)$ does not have fixed points by \ref{prop_pf_con_ppf_implica_f_pf_general}. By lemma \ref{lema_para_Q7}, $\U(f)(\{A,B,C\})\subseteq \{A,B,C\}$. In a similar way, $\F(f)(\{A',B',C'\})\subseteq \{A',B',C'\}$. Consider the subspace $M = \{A,B,C,A',B',C'\}\subseteq \C(P^{343}_2)$. It follows that $\C(f)(M)\subseteq M$. Since $M$ is contractible, the map $\C(f)$ has fixed points and then, from \ref{prop_pf_con_ppf_implica_f_pf_general} we obtain that $f$ has fixed points, which entails a contradiction.

Therefore, $P^{343}_2$ has the fixed point property.
\end{ex}

\begin{ex}
Let $n,k\in\N$ such that $n\geq 4$ and $2\leq k \leq n-1$. Let $X_{n,k}$ the finite $T_0$--space whose underlying set is $\{a_1,a_2,a_3,b_1,b_2,\ldots,b_n,c_1,c_2,\ldots,c_n\}$ and whose minimal open sets are 
\begin{align*}
U_{a_1}&=\{a_1\}\cup \{b_j\mid 1\leq j\leq n-1\}\cup \{c_j\mid 1\leq j\leq n\},\\
U_{a_2}&=\{a_2\}\cup \{b_j\mid 1\leq j\leq n \land j\neq n-1 \}\cup \{c_j\mid 1\leq j\leq n\},\\
U_{a_3}&=\{a_3\}\cup \{b_j\mid k\leq j\leq n\}\cup \{c_j\mid k-1\leq j\leq n\},\\
U_{b_1}&=\{b_1,c_1,c_2\},\\
U_{b_j}&=\{b_j,c_{j-1},c_{j+1}\}, \textup{ for $2\leq j\leq n-1$},\\ 
U_{b_n}&=\{b_n,c_{n-1},c_n\},\\
U_{c_j}&=\{c_j\}, \textup{ for $1\leq j\leq n$}.
\end{align*}

The Hasse diagram of $X_{n,k}$ is
\begin{center}
\begin{tikzpicture}[y=1.5cm]
\tikzstyle{every node}=[font=\scriptsize]
\foreach \x in {1,2} \draw (\x,0) node(c\x)[inner sep=2pt]{$\bullet$} node[below=1]{$c_\x$}; 
\foreach \x in {3,5} \draw (\x,0) node(c\x)[inner sep=2pt]{$\cdots$};
\foreach \x in {4} \draw (\x,0) node(c\x)[inner sep=2pt]{$\bullet$} node[below=1]{$c_k$}; 
\foreach \x in {6} \draw (\x,0) node(c\x)[inner sep=2pt]{$\bullet$} node[below=1]{$c_{n-2}$}; 
\foreach \x in {7} \draw (\x,0) node(c\x)[inner sep=2pt]{$\bullet$} node[below=1]{$c_{n-1}$}; 
\foreach \x in {8} \draw (\x,0) node(c\x)[inner sep=2pt]{$\bullet$} node[below=1]{$c_n$}; 

\foreach \x in {1,2} \draw (\x,1) node(b\x)[inner sep=2pt]{$\bullet$} node[right=1]{$b_\x$}; 
\foreach \x in {3,5} \draw (\x,1) node(b\x)[inner sep=2pt]{$\cdots$};
\foreach \x in {4} \draw (\x,1) node(b\x)[inner sep=2pt]{$\bullet$} node[right=1]{$b_k$}; 
\foreach \x in {6} \draw (\x,1) node(b\x)[inner sep=2pt]{$\bullet$} node[right=1]{$b_{n-2}$}; 
\foreach \x in {7} \draw (\x,1) node(b\x)[inner sep=2pt]{$\bullet$} node[right=1]{$b_{n-1}$}; 
\foreach \x in {8} \draw (\x,1) node(b\x)[inner sep=2pt]{$\bullet$} node[right=1]{$b_n$}; 

\foreach \x in {1,2,3} \draw (1.75*\x+1,2) node(a\x)[inner sep=2pt]{$\bullet$} node[above=1]{$a_\x$}; 
  
\foreach \x in {1,2,4,6,7} \draw (a1)--(b\x);
\foreach \x in {1,2,4,6,8} \draw (a2)--(b\x);
\foreach \x in {4,6,7,8} \draw (a3)--(b\x);
\foreach \x in {1,2} \draw (b1)--(c\x);
\foreach \x in {1} \draw (b2)--(c\x);
\draw[shorten >=1cm,shorten <=0cm] (b2)--(c3);
\draw[shorten >=1cm,shorten <=0cm] (c2)--(b3);
\foreach \x in {3,5} \draw[shorten >=1cm,shorten <=0cm] (b4)--(c\x);
\foreach \x in {3,5} \draw[shorten >=1cm,shorten <=0cm] (c4)--(b\x);
\draw[shorten >=1cm,shorten <=0cm] (b6)--(c5);
\draw[shorten >=1cm,shorten <=0cm] (c6)--(b5);
\foreach \x in {7} \draw (b6)--(c\x);
\foreach \x in {6,8} \draw (b7)--(c\x);
\foreach \x in {7,8} \draw (b8)--(c\x);
\end{tikzpicture}
\end{center}

Observe that the posets $X_{4,3}$ and $X_{4,2}$ are isomorphic to the posets $P^{443}_4$ and $P^{443}_5$ of \cite[Fig.1]{Schroder1993fpp11}.

We will prove that the spaces $X_{n,k}$ have the fixed point property for all $n,k$ as above. To this end we will compute $\U(X_{n,k})$. Let
\begin{align*}
A &=\{b_j\mid 1\leq j \leq n-2\} \cup \{c_j\mid 1\leq j \leq n-1\}, \\
B &=\{b_j \mid j\geq k \ \land\ j\not\equiv n \modu{2} \} \cup \{c_j \mid j\geq k-1 \ \land\ j\equiv n \modu{2} \}, \\
C &=\{b_j \mid j\geq k \ \land\ j\equiv n \modu{2} \} \cup \{c_j \mid j\geq k-1 \ \land\ j\not\equiv n \modu{2} \}\cup\{c_n\}, \\
D &=\{b_j \mid k\leq j\geq n-3 \ \land\ j\not\equiv n \modu{2} \} \cup \{c_j \mid k-1\leq j \leq n-2 \ \land\ j\equiv n \modu{2} \}, \\
E &=\{b_j \mid k\leq j\leq n-2 \ \land\ j\equiv n \modu{2} \} \cup \{c_j \mid k-1\leq j\leq n-1 \ \land\ j\not\equiv n \modu{2} \}, \\
F &= \{c_n\}, \textnormal{ and } \\
S &= X_{n,k}-\mxl[X_{n,k}].
\end{align*}
Observe that the connected components of $U_{a_1a_2}$ are $A$ and $F$, the connected components of $U_{a_1a_3}$ are $B$ and $E$, the connected components of $U_{a_2a_3}$ are $C$ and $D$ and that the connected components of $U_{a_1a_2a_3}$ are $D$, $E$ and $F$. Hence $\U(X_{n,k})$ is the finite space given by the following Hasse diagram 
\begin{center}
\begin{tikzpicture}[x=1cm, y=1.2cm]
\tikzstyle{every node}=[font=\scriptsize]
\draw (0,2) node (Ua1){$\bullet$} node[above=1]{$U_{a_1}$};
\draw (1,2) node (Ua2){$\bullet$} node[above=1]{$U_{a_2}$};
\draw (2,2) node (Ua3){$\bullet$} node[above=1]{$U_{a_3}$};
 
\draw (0,1) node (A){$\bullet$} node[right=1]{$A$};
\draw (1,1) node (B){$\bullet$} node[right=1]{$B$};
\draw (2,1) node (C){$\bullet$} node[right=1]{$C$};

\draw (0,0) node (D){$\bullet$} node[below=1]{$D$};
\draw (1,0) node (E){$\bullet$} node[below=1]{$E$};
\draw (2,0) node (F){$\bullet$} node[below=1]{$F$};
 
\draw [-] (Ua1) to (B) to (Ua3) to (C) to (Ua2) to (A) to (Ua1);
\draw [-] (A) to (E) to (C) to (F) to (B) to (D) to (A);
\end{tikzpicture}
\end{center}

Suppose that $f\colon X_{n,k}\to X_{n,k}$ is a continuous map without fixed points. Since all the elements of $\U(X_{n,k})$ are contractible subspaces of $X_{n,k}$, then $\U(f)$ does not have fixed points by \ref{prop_pf_con_ppf_implica_f_pf_general}. Thus, by \ref{lema_para_Q4_Q5}, $\U(f)(\{A,B,C\})= \{A,B,C\}$. Since $S=A\cup B\cup C$, we have that $f(S)=f(A\cup B\cup C) = f(A)\cup f(B)\cup f(C) \subseteq A\cup B\cup C = S$. 

We will prove now that the restriction $f|\colon S\to S$ is not bijective. Note that 
\begin{align*}
\#A = 2n-3 > n-k +2 \geq \#B
\end{align*}
where the first inequality holds since $n+k >5$. Thus, the sets $A$, $B$ and $C$ do not all have the same cardinality and since $\U(f)$ does not have fixed points and $\U(f)(\{A,B,C\})= \{A,B,C\}$ it follows that there exists $T\in \{A,B,C\}$ such that $\# \U(f)(T)< \# T$. Hence $\# f(T)< \# T$ and then $f$ is not bijective.

Finally, since $f|\colon S\to S$ is not bijective, it has a fixed point by \ref{prop_crown} which entails a contradiction. 
\end{ex}

\section{Beat points}

\begin{rem}
Note that the space $X$ of example \ref{ex_2} is not contractible but $\U(X)$ is. Thus $X$ and $\U(X)$ do not have the same homotopy type. 
\end{rem}

\begin{prop} \label{prop_C_cap_A}
Let $X$ be a finite $T_0$--space and let $A$ be a bp-retract of $X$. Let $r\colon X\to A$ be the corresponding retraction.
\begin{enumerate}[(a)]
\item If $C\in\C(X)$, then $r(C)=C\cap A$.
\item If $\mxl[X] \subseteq A$ and $C\in\U(X)$, then $C\cap A\in\U(A)$.
\item If $\mnl[X] \subseteq A$ and $C\in\F(X)$, then $C\cap A\in\F(A)$.
\end{enumerate}
\end{prop}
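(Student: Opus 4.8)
The plan is to prove (\emph{a}) first — distinguishing whether $A$ is a \emph{dbp}-retract or a \emph{ubp}-retract of $X$, the second case reducing to the first by passing to opposite spaces — and then to deduce (\emph{b}) and (\emph{c}) from (\emph{a}) together with two elementary facts about subspaces.

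\medskip
\noindent\emph{Part (\emph{a}).} Suppose $A$ is a \emph{dbp}-retract of $X$. By the remark after theorem \ref{theo_Ux_cap_A} the retraction satisfies $r(x)=\max(U^X_x\cap A)$, so in particular $r(x)\le x$ for every $x\in X$. Since $r$ is the identity on $A$ we have $C\cap A=r(C\cap A)\subseteq r(C)$, and $r(C)\subseteq A$ is clear, so it remains to show $r(C)\subseteq C$, i.e.\ that $r(x)\in C$ for all $x\in C$. Fix $x\in C$; as $r(x)\le x$, the elements $r(x)$ and $x$ are comparable and hence lie in the same connected component of every subspace of $X$ that contains both. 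If $C\in\U(X)$, corollary \ref{coro_cc_of_U_Bsharp_F_Bflat} says that $C$ is a connected component of $U_{\ub C}$; from $x\in C\subseteq U_{\ub C}$ we get $x\le a$ for all $a\in\ub C$, hence $r(x)\le x\le a$ for all such $a$, so $r(x)\in U_{\ub C}$, and therefore $r(x)$ lies in the component of $U_{\ub C}$ through $x$, which is $C$. If $C\in\F(X)$, then $C$ is a connected component of $F_{\lb C}$; for each $b\in\lb C$ we have $b\le x$ with $b$ minimal in $X$, so $b\in A$ (minimal elements belong to any \emph{dbp}-retract) and thus $b\in U^X_x\cap A$, whence $b\le\max(U^X_x\cap A)=r(x)$; therefore $r(x)\in F_{\lb C}$ and $r(x)$ lies in the component of $F_{\lb C}$ through $x$, which is $C$. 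This proves (\emph{a}) for \emph{dbp}-retracts. If $A$ is a \emph{ubp}-retract of $X$, then $A^{\op}$ is a \emph{dbp}-retract of $X^{\op}$ and, by proposition \ref{prop_C_op}, $\C(X)$ and $\C(X^{\op})$ have the same underlying set while the same map $r$ is simultaneously the retraction of $X$ onto $A$ and of $X^{\op}$ onto $A^{\op}$; applying the case already proved to $X^{\op}$ yields $r(C)=C\cap A$ for every $C\in\C(X)$.

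\medskip
\noindent\emph{Parts (\emph{b}) and (\emph{c}).} Assume $\mxl[X]\subseteq A$. We record two facts: (i) $\mxl[A]=\mxl[X]$ — a maximal element of $X$ lying in $A$ is maximal in $A$, and an element of $\mxl[A]$ that were not maximal in $X$ would lie below some element of $\mxl[X]\subseteq A$, a contradiction; and (ii) $U^A_B=U^X_B\cap A$ for every $B\subseteq A$, since minimal open sets restrict to subspaces. Now let $C\in\U(X)$. By (\emph{a}), $C\cap A=r(C)$ is the image of the connected non-empty space $C$ under the continuous map $r$, hence connected and non-empty. Since $\ub C\subseteq\mxl[X]=\mxl[A]$ and $C\subseteq U^X_{\ub C}$, fact (ii) gives $C\cap A\subseteq U^X_{\ub C}\cap A=U^A_{\ub C}$; moreover $C\cap A$ is a connected component of $U^A_{\ub C}$, because if $z\in U^A_{\ub C}$ lies in the component of $U^A_{\ub C}$ containing $C\cap A$, then, as $U^A_{\ub C}\subseteq U^X_{\ub C}$ and $\varnothing\ne C\cap A\subseteq C$, the element $z$ lies in the component of $U^X_{\ub C}$ containing $C$, which is $C$, and being in $A$ we conclude $z\in C\cap A$. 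Therefore $C\cap A\in\U(A)$, which is (\emph{b}). Part (\emph{c}) is the mirror image, proved by the same argument with $\F$, $F$, $\mnl$ and $\lb C$ replacing $\U$, $U$, $\mxl$ and $\ub C$ (the analogue of (i) being $\mnl[A]=\mnl[X]$, which follows from $\mnl[X]\subseteq A$), or else it follows from (\emph{b}) applied to $X^{\op}$ via proposition \ref{prop_C_op}.

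\medskip
\noindent\emph{Where the work is.} The heart of the argument is the two case analyses in (\emph{a}): we only know that $r(x)\le x$, and the point is to certify that $r(x)$ falls back into the open set $U_{\ub C}$ (respectively the closed set $F_{\lb C}$) that cuts $C$ out — which uses precisely that $C$ is a \emph{connected component} of that set and that a \emph{dbp}-retract retains every minimal element. Once (\emph{a}) is available, the decisive observation for (\emph{b}) and (\emph{c}) is simply that $C\cap A=r(C)$ is connected, so that it cannot split when one passes to the subspace $A$, and the remainder is the bookkeeping in (i) and (ii). If one works with a notion of \emph{bp}-retract that allows removals of down and up beat points to be interleaved, one reduces to a single removal and iterates, the single step being handled exactly as the two cases above.
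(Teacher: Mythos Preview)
Your proof is correct and follows essentially the same route as the paper's: reduce to the \emph{dbp} case, show $r(C)\subseteq C$ by splitting on $C\in\U(X)$ vs.\ $C\in\F(X)$, and for (\emph{b}) use that $C\cap A=r(C)$ is connected to identify it as a component of $U^A_{\ub C}$. One simplification worth noting: in your $C\in\U(X)$ case you pass through $U_{\ub C}$ and its components, but the paper observes more directly that $C$ itself is an open subset of $X$ (Remark \ref{rem_cc_of_U_F_are_open_closed}), so $r(z)\le z\in C$ immediately gives $r(z)\in C$ --- no need to invoke $\ub C$ at all.
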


\begin{proof}\ 

($a$) Suppose that $A$ is a dbp-retract of $X$. Clearly, $C\cap A \subseteq r(C)$ and $r(C)\subseteq A$. It remains to prove that $r(C)\subseteq C$. 

Suppose first that $C\in\U(X)$. Let $z\in C$. Since $r(z)\leq z$ and $C$ is an open subset of $X$ it follows that $r(z)\in C$.

Suppose now that $C\in\F(X)$. Let $z\in C$. Note that $z\geq y$ for all $y\in\lb C$. Then $r(z)\geq r(y)=y$ for all $y\in\lb C$, that is, $r(z)\in F_{\lb C}$. Since $r(z)\leq z$ it follows that $C\cup\{r(z)\}$ is a connected subspace of $F_{\lb C}$. Thus $C\cup\{r(z)\}\subseteq C$ and hence $r(z)\in C$.

($b$) Note that $\mxl[A]=\mxl[X]$ and that $C\cap A$ is connected and non-empty by item ($a$). Since $C\in\U(X)$, $C$ is a connected component of $U^X_{\ub C}$. Then $C\cap A\subseteq U^X_{\ub C}\cap A=U^{A}_{\ub C}$.

Thus, there exists a connected component $C_0$ of $U^{A}_{\ub C}$ such that $C\cap A\subseteq C_0$. Then $C_0 \subseteq U^{X}_{\ub C}$ and since $\varnothing\neq C\cap A\subseteq C\cap C_0$ and $C$ is a connected component of $U^X_{\ub C}$ we obtain that $C_0\subseteq C$.
Hence, $C\cap A\subseteq C_0 \subseteq C\cap A$. Therefore $C\cap A= C_0 \in \U(A)$.

($c$) Analogous to that of item ($b$).
\end{proof}

\begin{prop} \label{prop_U_beat_points}
Let $X$ be a finite $T_0$--space and let $A$ be a bp-retract of $X$. Let $r\colon X\to A$ be the corresponding retraction and let $i\colon A \to X$ be the inclusion map.
\begin{enumerate}[(a)]
\item If $\mxl[X]\subseteq A$ then $\U(r)(C)=r(C)$ for all $C\in\U(X)$ and $\U(r)\U(i)=\id_{\U(A)}$.
\item If $A$ is an ubp-retract of $X$ then $\U(r)$ is a homeomorphism.
\item If $\mxl[X] \subseteq A$ and $A$ is a dbp-retract of $X$ then $\U(i)\U(r)\leq \id_{\U(X)}$.
\end{enumerate}
\end{prop}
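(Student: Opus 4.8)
The plan is to prove the three items in order, using Proposition \ref{prop_C_cap_A} as the main engine together with the basic facts about bp-retracts from Theorems \ref{theo_dbpr_equivalences} and \ref{theo_Ux_cap_A}. For item (a), assume $\mxl[X]\subseteq A$. Given $C\in\U(X)$, I would first observe that $r(C)=C\cap A\in\U(A)$ by Proposition \ref{prop_C_cap_A}(a) and (b); the point is then to show that this set is exactly $\U(r)(C)$, i.e.\ that $C\cap A$ is the \emph{minimum} of $\{D\in\U(A)\mid r(C)\subseteq D\}$. Since $C\cap A$ itself lies in $\U(A)$ and obviously contains $r(C)=C\cap A$, it is a member of that set, and being a member it is automatically its own minimum once we know it is below every other member; but any $D\in\U(A)$ with $r(C)\subseteq D$ contains the set $C\cap A$, so $C\cap A\subseteq D$, i.e.\ $C\cap A\leq D$ in $\U(A)$. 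Hence $\U(r)(C)=r(C)$. For the identity $\U(r)\U(i)=\id_{\U(A)}$: given $C\in\U(A)$, the inclusion $i$ is a homeomorphism onto its image, so by Proposition \ref{prop_Uf_homeo} (applied to $i$ viewed as a map onto $X$ — one must be slightly careful here since $i$ is not surjective, so instead I would argue directly) $\U(i)(C)$ is the connected component of $U^X_{\ub C}$ containing $C$; call it $\widetilde C$. Then $\widetilde C\cap A$ is a connected subset of $U^A_{\ub C}$ containing $C$, and since $C$ is already a connected component of $U^A_{\ub C}=U^X_{\ub C}\cap A$ (using Theorem \ref{theo_Ux_cap_A} and $\mxl[A]=\mxl[X]$), we get $\widetilde C\cap A=C$; combining with the first part, $\U(r)(\U(i)(C))=r(\widetilde C)=\widetilde C\cap A=C$.

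For item (b), assume $A$ is an ubp-retract of $X$. Then the maximal elements of $X$ lie in $A$ (as noted in the text after the definition of bp-retracts), so item (a) applies and gives $\U(r)\U(i)=\id_{\U(A)}$. It therefore suffices to show $\U(i)\U(r)=\id_{\U(X)}$, which will follow once we show $\U(r)$ is injective (a bijective continuous map between finite $T_0$-spaces whose inverse is also continuous is a homeomorphism; here $\U(i)$ is continuous and is a two-sided inverse once bijectivity is in hand, or alternatively one checks $\U(i)\U(r)=\id$ directly). The natural approach is: since removing up beat points does not change $U^X_x$ for $x\in A$ in an essential way, I expect $C\mapsto C\cap A$ to be a bijection $\U(X)\to\U(A)$ with inverse $D\mapsto$ (the component of $U^X_{\ub D}$ containing $D$). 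Concretely, I would show that for $C\in\U(X)$, the component of $U^X_{\ub{(C\cap A)}}$ containing $C\cap A$ is $C$ itself: indeed $\ub{(C\cap A)}=\ub C$ since $C$ and $C\cap A$ have the same upper bounds among maximal elements (as $C\cap A$ meets every $U_a$ that $C$ does — this uses that $A$ contains enough of $X$; here is where the ubp hypothesis, not merely $\mxl[X]\subseteq A$, is needed, because we need $C\cap A\neq\varnothing$ to be "cofinal enough" in $C$, which holds because $A$ is obtained by removing up beat points and hence $U^X_x\cap A$ is a dbp-retract-type situation with $U^X_x\cap A$ having the same minimal/relevant structure). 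This identifies $\U(i)\U(r)$ with the identity.

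For item (c), assume $\mxl[X]\subseteq A$ and $A$ is a dbp-retract; let $r$ be the retraction, so $ir\leq\id_X$ by Theorem \ref{theo_dbpr_equivalences}. Applying the monotonicity statement Proposition \ref{prop_f_leq_g_Uf_leq_Ug}(a) to $ir\leq\id_X$ gives $\U(ir)\leq\U(\id_X)=\id_{\U(X)}$. It remains to identify $\U(ir)$ with $\U(i)\U(r)$, or at least to bound $\U(i)\U(r)$ by it. In general the construction is not functorial, so one cannot simply write $\U(ir)=\U(i)\U(r)$; however, by the earlier proposition $\U(ir)\leq\U(i)\U(r)$ always holds — wait, that goes the wrong way. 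Instead I would argue directly: for $C\in\U(X)$, by item (a) $\U(r)(C)=r(C)=C\cap A$, and then $\U(i)(C\cap A)$ is the component $\widetilde{C\cap A}$ of $U^X_{\ub{(C\cap A)}}$ containing $C\cap A$. Since $\ub{(C\cap A)}\supseteq\ub C$ is false in general — rather $\ub{(C\cap A)}\supseteq\ub C$ because every maximal element above all of $C$ is above all of $C\cap A\subseteq C$ — so $U^X_{\ub{(C\cap A)}}\supseteq U^X_{\ub C}$, hence $\widetilde{C\cap A}\supseteq$ the component of $U^X_{\ub C}$ containing $C$, which is $C$. Thus $\U(i)\U(r)(C)\supseteq C$... which again is the wrong direction. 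So the honest route is via functoriality-up-to-the-inequality already proved in the excerpt: we have $\U(ir)\leq\U(i)\U(r)$, and I want the reverse. Reconsidering: the cleanest correct argument is to note $ri\leq\id_A$ fails but $ir\leq\id_X$, and $\U(i)\U(r)(C)$, being the component of $U^X_{\ub{r(C)}}$ containing $r(C)$, is contained in $\U(ir)(C)$ precisely because $ir(C)=r(C)\subseteq$ any $D\in\U(X)$ with $ir(C)\subseteq D$, making $\U(i)\U(r)(C)$ a candidate — but $\U(ir)(C)$ is the minimum such, so $\U(ir)(C)\leq\U(i)\U(r)(C)$. Combined with $\U(ir)(C)\leq\id$, this is still not enough. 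The genuine obstacle, and the step I expect to be hardest, is exactly this comparison of $\U(i)\U(r)$ with the identity without functoriality; I would resolve it by showing directly that $\U(i)\U(r)(C)\subseteq C$ for $C\in\U(X)$, i.e.\ that the component of $U^X_{\ub{r(C)}}$ containing $r(C)=C\cap A$ is contained in $C$. For this, let $z$ be in that component; then $z$ is connected to $C\cap A$ within $U^X_{\ub{r(C)}}\subseteq U^X_a$ for each $a\in\ub{r(C)}$. Using that $A$ is a \emph{dbp}-retract so $r(z)=\max(U^X_z\cap A)\leq z$ and $r(z)\in C\cap A\subseteq C$ (since $r(z)$ is connected to $C\cap A$ in $C\cap A$... ), and that $C$ is open, one pushes the connectedness down into $A$ and back up to conclude $z\in C$. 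This last paragraph is where the real work lies, and I would prove it as a separate short lemma if the argument grows; everything else is formal manipulation of the definitions of $\U(-)$ on maps and of bp-retracts.
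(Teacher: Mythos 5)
Items (a) and (b) of your proposal are essentially correct and follow the same route as the paper: for (a) you identify $\U(r)(C)$ with $C\cap A$ via Proposition \ref{prop_C_cap_A} and then show $\U(r)\U(i)(D)$ is a connected subset of $U^{A}_{\ub D}$ containing the component $D$; for (b) the missing precise ingredient behind your ``cofinal enough'' remark is simply that an ubp--retraction satisfies $r(z)\geq z$, so every $z\in C$ lies below $r(z)\in C\cap A\subseteq\U(ir)(C)$ and hence in the open set $\U(ir)(C)$, giving $\U(ir)(C)=C$. That is exactly the paper's argument, and your alternative via $\ub{(C\cap A)}=\ub{C}$ also works.

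Item (c), however, contains a genuine gap. First, you reversed a containment: from $\ub{(C\cap A)}\supseteq\ub{C}$ one gets $U^X_{\ub{(C\cap A)}}\subseteq U^X_{\ub{C}}$ (a larger index set gives a \emph{smaller} intersection), not $\supseteq$ as you wrote. With the correct direction the ``direct'' argument you abandoned closes immediately: $\U(i)\U(r)(C)$ is the connected component of $U^X_{\ub{(C\cap A)}}$ containing $C\cap A$, hence a connected subset of $U^X_{\ub{C}}$ meeting the component $C$, hence contained in $C$. Because of the sign error you instead derived the useless inequality $\U(i)\U(r)(C)\supseteq C$ and retreated to an unproven sketch (``one pushes the connectedness down into $A$ and back up''), which as written is not a proof. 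Second, you missed the cleaner observation the paper uses: the non-functoriality worry is moot here because item (a) gives $\U(r)(C)=r(C)$ \emph{on the nose}, so $\U(i)\U(r)(C)=\U(i)(r(C))=\min\{D\in\U(X)\mid ir(C)\subseteq D\}=\U(ir)(C)$ by the very definition of $\U(i)$; then $ir\leq\id_X$ and Proposition \ref{prop_f_leq_g_Uf_leq_Ug} give $\U(ir)\leq\U(\id_X)=\id_{\U(X)}$, and (c) is done in two lines.
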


\begin{proof}\ 

($a$) Let $C\in\U(X)$. Then $r(C)=C\cap A\in \U(A)$ by \ref{prop_C_cap_A}. It follows that $\U(r)(C)=r(C)$.

Let $D\in \U(A)$. Then
\begin{align*}
D &= ri(D)\subseteq r(\U(i)(D)) \subseteq \U(r)(\U(i)(D)) = \U(i)(D)\cap A \subseteq U^X_{\ub D} \cap A = U^{A}_{\ub D}
\end{align*}
by \ref{rem_def_de_Uf_and_Ff}. Since $D$ is a connected component of $U^{A}_{\ub D}$ by \ref{coro_cc_of_U_Bsharp_F_Bflat} and $\U(r)\U(i)(D)$ is connected, it follows that $\U(r)\U(i)(D)=D$.

($b$) Since $A$ is an ubp-retract then $\mxl[X]\subseteq A$ and hence $\U(r)\U(i)=\id_{\U(A)}$ by item ($a$). Let $C\in\U(X)$. Then
\begin{align*}
\U(i)\U(r)(C)=\U(i)(r(C))=\U(ir)(C).
\end{align*}
Note that $ir(C)=C\cap A$ and since $C\in\U(X)$ it follows that $\U(ir)(C) \subseteq C$.

Let $z\in C$. Since $r(z)\geq z$, $r(z)\in ir(C)\subseteq \U(ir)(C)$ and $\U(ir)(C)$ is an open subset of $X$, we obtain that $z\in\U(ir)(C)$. Hence, $\U(ir)(C) = C$.

Therefore, $\U(i)\U(r)(C)=C$.

($c$) Let $C\in\U(X)$. Then
\begin{align*}
\U(i)\U(r)(C)=\U(i)(r(C))=\U(ir)(C) \leq \U(\id_X)(C)=C
\end{align*}
by \ref{prop_f_leq_g_Uf_leq_Ug}.
\end{proof}

Clearly, an analogous result holds for $\F$.

\begin{prop} \label{prop_F_beat_points}
Let $X$ be a finite $T_0$--space and let $A$ be a bp-retract of $X$. Let $r\colon X\to A$ be the corresponding retraction and let $i\colon A \to X$ be the inclusion map.
\begin{enumerate}[(a)]
\item If $\mnl[X]\subseteq A$ then $\F(r)(C)=r(C)$ for all $C\in\F(X)$ and $\F(r)\F(i)=\id_{\F(A)}$.
\item If $A$ is a dbp-retract of $X$ then $\F(r)$ is a homeomorphism.
\item If $\mnl[X]\subseteq A$ and $A$ is an ubp-retract of $X$ then $\F(i)\F(r)\geq \id_{\F(X)}$.
\end{enumerate}
\end{prop}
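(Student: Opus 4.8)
The plan is to derive the proposition from Proposition \ref{prop_U_beat_points} by passing to opposite spaces, using the duality recorded in Proposition \ref{prop_C_op}; I will indicate at the end the alternative of simply repeating the earlier proof.

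\emph{First step: the dictionary.} Given a continuous map $f\colon X\to Y$, the same underlying function is order-preserving as a map $f^{\op}\colon X^{\op}\to Y^{\op}$, and I would check that, under the identifications $\F(X)=\U(X^{\op})$ and $\F(Y)=\U(Y^{\op})$ of Proposition \ref{prop_C_op}(b), the function $\F(f)$ coincides with $\U(f^{\op})$. By Remark \ref{rem_def_de_Uf_and_Ff}, for $C\in\F(X)$ the set $\F(f)(C)$ is the connected component of $F^Y_{\lb{f(C)}}$ containing $f(C)$. Since $\mnl[Y]=\mxl[Y^{\op}]$ and $U^{Y^{\op}}_a=F^Y_a$ for every $a\in\mnl[Y]$, the operation $B\mapsto\lb B$ in $Y$ agrees with $B\mapsto\ub B$ in $Y^{\op}$, so $F^Y_{\lb{f(C)}}=\bigl(U^{Y^{\op}}_{\ub{f(C)}}\bigr)^{\op}$ by Proposition \ref{prop_C_op}(a); as connectedness (and hence the partition into connected components) of a subset is unaffected by passing to the opposite topology, this component is exactly $\U(f^{\op})(C)$. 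Together with Proposition \ref{prop_C_op}(c) this shows $\F(f)=\U(f^{\op})$ as functions, with the caveat that the poset structures are opposite: $\F(X)=\bigl(\U(X^{\op})\bigr)^{\op}$.

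\emph{Second step: translating hypotheses and concluding.} If $A$ is a bp-retract of $X$ with corresponding retraction $r$, then $A^{\op}$ is a bp-retract of $X^{\op}$ with corresponding retraction $r^{\op}$, the inclusion of $A^{\op}$ into $X^{\op}$ is $i^{\op}$, one has $\mnl[X]\subseteq A$ if and only if $\mxl[X^{\op}]\subseteq A^{\op}$, and $A$ is a dbp-retract (resp.\ ubp-retract) of $X$ if and only if $A^{\op}$ is an ubp-retract (resp.\ dbp-retract) of $X^{\op}$, as recalled in the preliminaries. Now apply Proposition \ref{prop_U_beat_points} to $X^{\op}$, $A^{\op}$, $r^{\op}$, $i^{\op}$ and rewrite via the dictionary: Proposition \ref{prop_U_beat_points}(a) gives $\U(r^{\op})(C)=r^{\op}(C)$ for all $C\in\U(X^{\op})$ and $\U(r^{\op})\U(i^{\op})=\id_{\U(A^{\op})}$, i.e.\ $\F(r)(C)=r(C)$ for all $C\in\F(X)$ and $\F(r)\F(i)=\id_{\F(A)}$; Proposition \ref{prop_U_beat_points}(b), since $A^{\op}$ is then an ubp-retract, gives that $\U(r^{\op})$ is a homeomorphism, hence so is $\F(r)$ (an order isomorphism of posets is also one of their opposites); Proposition \ref{prop_U_beat_points}(c), since $A^{\op}$ is then a dbp-retract with $\mxl[X^{\op}]\subseteq A^{\op}$, gives $\U(i^{\op})\U(r^{\op})\leq\id_{\U(X^{\op})}$, and reversing the order to pass to $\F(X)$ turns this into $\F(i)\F(r)\geq\id_{\F(X)}$.

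\emph{Obstacle and alternative.} The only delicate point is the bookkeeping in the first step: one must check carefully that $\F(f)$ really is $\U(f^{\op})$ under the identifications and then track which inequalities reverse when moving between a $\U$-space and the corresponding $\F$-space. If one prefers to avoid this, the alternative is to repeat the proofs of Propositions \ref{prop_C_cap_A} and \ref{prop_U_beat_points} line by line, interchanging down $\leftrightarrow$ up, $\leq\;\leftrightarrow\;\geq$, open $\leftrightarrow$ closed, $U\leftrightarrow F$ and $B\mapsto\ub B\;\leftrightarrow\;B\mapsto\lb B$; the only place requiring attention there is, exactly as in the original argument, the use of $r(z)\geq z$ in place of $r(z)\leq z$ together with the fact that sets of the form $\F(r)(C)$ and $\F(ir)(C)$ are \emph{closed} in $X$.
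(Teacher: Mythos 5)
Your proposal is correct. The paper's own proof of this proposition is literally the single line ``Similar to the proof of \ref{prop_U_beat_points}'', so your main route --- formally deducing the statement from Proposition \ref{prop_U_beat_points} applied to $X^{\op}$ and $A^{\op}$ via the dictionary of Proposition \ref{prop_C_op} --- is a more explicit implementation of the same duality the paper invokes implicitly, and your ``alternative'' (repeating the argument with all inequalities and open/closed conditions reversed) is exactly what the paper intends. The one point that genuinely needed checking, namely that $\F(f)$ agrees with $\U(f^{\op})$ under the identification $\F(Y)=\U(Y^{\op})$ and that the pointwise inequality $\U(i^{\op})\U(r^{\op})\leq\id$ becomes $\F(i)\F(r)\geq\id$ after reversing the order on $\F(X)=\bigl(\U(X^{\op})\bigr)^{\op}$, is handled correctly in your first step.
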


\begin{proof}
Similar to the proof of \ref{prop_U_beat_points}.
\end{proof}

\begin{rem}
Let $X$ be a finite $T_0$--space and let $A$ be a dbp-retract of $X$ such that $\mxl[X]\subseteq A$. Let $r$ and $i$ be as in \ref{prop_U_beat_points}. Then $\U(r)\U(i)=\id_{\U(A)}$ and $\U(i)\U(r)\leq \id_{\U(X)}$. Thus, $\U(i)$ is an embedding, and applying \ref{theo_dbpr_equivalences} it follows that $\U(i)(\U(A))$ is a dbp-retract of $\U(X)$, that is, $\U(i)(\U(A))$ can be obtained from $\U(X)$ by removing down beat points. Observe that $\U(i)(\U(A))$ is homeomorphic to $\U(A)$.
\end{rem}

The following examples show that several of the hypotheses of the previous propositions are necessary.

\begin{ex}
Let $X$ be the finite $T_0$--space given in figure \ref{fig_ex_3-X}. Note that 4 is a down beat point of $X$. The spaces $\U(X)$, $X-\{4\}$ and $\U(X-\{4\})$ are given in figures \ref{fig_ex_3-UX}, \ref{fig_ex_3b-X} and \ref{fig_ex_3b-UX}, respectively.

Observe that $\{0,2,4\}\in \U(X)$ but $\{0,2\}\notin \U(X-\{4\})$. Thus, \ref{prop_C_cap_A} might not hold if $\mxl[X]\not\subseteq A$.

Let $i\colon X-\{4\}\to X$ be the inclusion map and let $r\colon X\to X-\{4\}$ be the retraction. Note that $\U(r)(\{0,2,4\})=\{0,1,2,3\}\neq r(\{0,2,4\})$ and that $\U(i)\U(r)(\{0,2,4\})=\{0,1,2,3\}\not\leq\{0,2,4\}$. Hence, items ($a$) and ($c$) of proposition \ref{prop_U_beat_points} might not hold if $\mxl[X]\not\subseteq A$.

\begin{figure}
\begin{subfigure}[c]{0.24\textwidth}
\centering
\begin{tikzpicture}
\tikzstyle{every node}=[font=\scriptsize]
\draw (0.5,0) node(0){$\bullet$} node[below=1]{0}; 
\foreach \x in {1,2} \draw (\x-1,1) node(\x){$\bullet$} node[left=1]{\x}; 
\foreach \x in {3,4} \draw (\x-3,2) node(\x){$\bullet$} node[left=1]{\x}; 

\foreach \x in {1,2} \draw (0)--(\x);
\foreach \x in {3} \draw (1)--(\x);
\foreach \x in {3,4} \draw (2)--(\x);
\end{tikzpicture}
\caption{$X$}
\label{fig_ex_3-X}
\end{subfigure}
\begin{subfigure}[c]{0.24\textwidth}
\centering
\begin{tikzpicture}[x=2.1cm, y=1.7cm]
\path node (U3) at (0,1) [CC] {3\\1 2\\0}
 node (U4) at (1,1) [CC] {4\\2\\0}
 node (U34) at (0.5,0) [CC] {2\\0};
 
\draw [-] (U34) to (U3); 
\draw [-] (U34) to (U4);
\end{tikzpicture}
\caption{$\U(X)$}
\label{fig_ex_3-UX}
\end{subfigure}
\begin{subfigure}[c]{0.24\textwidth}
\centering
\begin{tikzpicture}
\tikzstyle{every node}=[font=\scriptsize]
\draw (0.5,0) node(0){$\bullet$} node[below=1]{0}; 
\foreach \x in {1,2} \draw (\x-1,1) node(\x){$\bullet$} node[left=1]{\x}; 
\foreach \x in {3} \draw (\x-3,2) node(\x){$\bullet$} node[left=1]{\x}; 

\foreach \x in {1,2} \draw (0)--(\x);
\foreach \x in {3} \draw (1)--(\x);
\foreach \x in {3} \draw (2)--(\x);
\end{tikzpicture}
\caption{$X-\{4\}$}
\label{fig_ex_3b-X}
\end{subfigure}
\begin{subfigure}[c]{0.22\textwidth}
\centering
\begin{tikzpicture}[x=2.3cm, y=2cm]
\path node (U3) at (0,1) [CC] {3\\1 2\\0};

\end{tikzpicture}
\caption{$\U(X-\{4\})$}
\label{fig_ex_3b-UX}
\end{subfigure}
\caption{ }
\end{figure}
\end{ex}

\begin{ex}
Let $X$ be the finite $T_0$--space given in figure \ref{fig_ex_4-X}. Note that 1 is a down beat point of $X$. The spaces $\U(X)$, $X-\{1\}$ and $\U(X-\{1\})$ are given in figures \ref{fig_ex_4-UX}, \ref{fig_ex_4b-X} and \ref{fig_ex_4b-UX}, respectively.

Note that $\U(X)$ and $\U(X-\{1\})$ are not homeomorphic. Hence, item ($b$) of \ref{prop_U_beat_points} might not hold if $A$ is a dbp-retract of $X$.
\begin{figure}
\begin{subfigure}[c]{0.24\textwidth}
\centering
\begin{tikzpicture}
\tikzstyle{every node}=[font=\scriptsize]
\draw (1,0) node(0){$\bullet$} node[below=1]{0}; 
\draw (1.5,1) node(1){$\bullet$} node[left=1]{1}; 
\foreach \x in {2,3,4} \draw (\x-2,2) node(\x){$\bullet$} node[left=1]{\x}; 

\foreach \x in {1,2} \draw (0)--(\x);
\foreach \x in {3,4} \draw (1)--(\x);
\end{tikzpicture}
\caption{$X$}
\label{fig_ex_4-X}
\end{subfigure}
\begin{subfigure}[c]{0.28\textwidth}
\centering
\begin{tikzpicture}[x=1.5cm, y=1.3cm]
\path node (U2) at (0,2) [CC] {2\\0}
node (U3) at (1,2) [CC] {3\\1\\0}
node (U4) at (2,2) [CC] {4\\1\\0}
node (U34) at (1.5,1) [CC] {1\\0}
node (U234) at (1,0) [CC] {0};
 
\draw [-] (U34) to (U3); 
\draw [-] (U34) to (U4);
\draw [-] (U234) to (U2);
\draw [-] (U234) to (U34);
\end{tikzpicture}
\caption{$\U(X)$}
\label{fig_ex_4-UX}
\end{subfigure}
\begin{subfigure}[c]{0.22\textwidth}
\centering
\begin{tikzpicture}[x=0.9cm, y=0.7cm]
\tikzstyle{every node}=[font=\scriptsize]
\draw (1,0) node(0){$\bullet$} node[below=1]{0}; 
\foreach \x in {2,3,4} \draw (\x-2,2) node(\x){$\bullet$} node[left=1]{\x}; 

\foreach \x in {2,3,4} \draw (0)--(\x);
\end{tikzpicture}
\caption{$X-\{1\}$}
\label{fig_ex_4b-X}
\end{subfigure}
\begin{subfigure}[c]{0.22\textwidth}
\centering
\begin{tikzpicture}[x=1.1cm, y=0.8cm]
\path node (U2) at (0,2) [CC] {2\\0}
node (U3) at (1,2) [CC] {3\\0}
node (U4) at (2,2) [CC] {4\\0}
node (U234) at (1,0) [CC] {0};

\draw [-] (U234) to (U2);
\draw [-] (U234) to (U3);
\draw [-] (U234) to (U4);
\end{tikzpicture}
\caption{$\U(X-\{1\})$}
\label{fig_ex_4b-UX}
\end{subfigure}
\caption{ }
\end{figure}
\end{ex}

\begin{prop}
Let $X$ be a finite $T_0$--space and let $a\in\mxl[X]$ be a down beat point of $X$. Let $b=\max \widehat U_a$.
\begin{enumerate}[(a)]
\item If $b\notin\mxl[X-\{a\}]$ then $U_a$ is a down beat point of $\U(X)$ and $\U(X-\{a\})$ is a ubp-retract of $\U(X)-\{U_a\}$.
\item If $b\in\mxl[X-\{a\}]$ then $\U(X)$ and $\U(X-\{a\})$ are homeomorphic.
\end{enumerate}
\end{prop}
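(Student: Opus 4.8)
The plan is to describe $\U(X)$ and $\U(X-\{a\})$ explicitly enough to reduce everything to the characterisations of dbp-- and ubp--retracts (Theorem \ref{theo_Ux_cap_A}) together with Proposition \ref{prop_conexo_minimal} and Corollary \ref{coro_cc_of_U_Bsharp_F_Bflat}. The starting observations are elementary. Since $b=\max\widehat U_a$ one has $\widehat U_a=U_b$, so $U_a=\{a\}\cup U_b$ and $U_b$ is connected (it has maximum $b$); and, writing $M=\mxl[X]-\{a\}$, for every non-empty $A\subseteq M$ one has $a\notin U_A^X$ (as $a$ is maximal), hence $U_A^{X-\{a\}}=U_A^X$, while for every non-empty $A'\subseteq M$ one has $U_a\cap U_{A'}^X=U_b\cap U_{A'}^X=U^X_{\{b\}\cup A'}$. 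From this, $\U(X)$ is the union of the connected components of the sets $U_A^X$ with $\varnothing\neq A\subseteq M$ (call these the \emph{type I} elements), the singleton $\{U_a\}$, and the connected components of the sets $U_b\cap U_{A'}^X$ with $\varnothing\neq A'\subseteq M$ (the \emph{type II} elements); moreover $U_a$ is the only element of $\U(X)$ containing $a$ (if $a$ lies in a component $C$ of $U_A^X$ then $a\le x$ for all $x\in A$, so $A=\{a\}$ and $C=U_a$), so $U_a$ is distinct from every type I and type II element and is maximal in $\U(X)$. Running the same analysis for $X-\{a\}$, and using that the only candidate for a new maximal element of $X-\{a\}$ is $b$ (so that $\mxl[X-\{a\}]=M$ in case (a) and $\mxl[X-\{a\}]=M\cup\{b\}$ in case (b)), shows that $\U(X-\{a\})$ consists of the very same type I and type II elements, together with $U_b$ in case (b) only.

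For part (a) I first show $U_a$ is a down beat point of $\U(X)$, i.e.\ that $\{C\in\U(X)\mid C\subsetneq U_a\}$ has a maximum. A short case split on whether a defining index set of such a $C$ contains $a$ shows that always $C\subseteq U_b$: if $a$ is in the index set then, excluding $A=\{a\}$, the set $U_A^X$ is contained in $U_b\cap U_{A'}^X\subseteq U_b$; if not, then $a\notin U_A^X\supseteq C$, so $C\subseteq U_a-\{a\}=U_b$. It remains to see $U_b\in\U(X)$, and this is where the hypothesis of case (a) enters: since $b\notin\mxl[X-\{a\}]$ there is $a'\in M$ with $b<a'$, so $U^X_{\{a,a'\}}=U_a\cap U_{a'}^X=U_b$ is a connected component of $U^X_{\{a,a'\}}$, hence $U_b\in\U(X)$. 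Therefore $U_b=\max\widehat U^{\U(X)}_{U_a}$ and $U_a$ is a down beat point. For the second assertion of (a), set $Y=\U(X)-\{U_a\}$; by the description above $\U(X-\{a\})$ is a subspace of $Y$, all three of $\U(X-\{a\})$, $Y$, $\U(X)$ carrying the order $\subseteq$. By the ubp--analogue of Theorem \ref{theo_Ux_cap_A} it suffices to prove that $\{D\in\U(X-\{a\})\mid C\subseteq D\}$ has a minimum for each $C\in Y$. Since $C\neq U_a$ and $U_a$ is the unique element of $\U(X)$ containing $a$, Corollary \ref{coro_cc_of_U_Bsharp_F_Bflat} gives some $x\in\ub C$ with $x\neq a$, whence $C\subseteq U_x^X=U_x^{X-\{a\}}$; as $C$ is a connected subspace of $X-\{a\}$ (because $a\notin C$), Lemma \ref{lema-para_conexo_minimal} and Proposition \ref{prop_conexo_minimal} applied in $X-\{a\}$ produce the desired minimum. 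Hence $\U(X-\{a\})$ is an ubp--retract of $Y=\U(X)-\{U_a\}$.

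For part (b) I would exhibit the map $\varphi\colon\U(X)\to\U(X-\{a\})$ that is the identity on type I and type II elements and sends $U_a$ to $U_b$; by the two decompositions this $\varphi$ is a well-defined bijection. To see it is an order isomorphism, observe that $U_a$ is maximal in $\U(X)$ and, by the case split above, its strict down-set is $\{C\in\U(X)\mid C\subseteq U_b\}$, which (as $U_a\not\subseteq U_b$) consists exactly of the type I and type II elements contained in $U_b$. On the other side, $U_b$ is maximal in $\U(X-\{a\})$, and in case (b) no type I or type II element equals $U_b$ — if a component of some $U_A^X$ or $U_b\cap U_{A'}^X$ were $U_b$, then $b\le x$ for every $x$ in the relevant subset of $M$, contradicting $\widehat F_b=\{a\}$ — so the strict down-set of $U_b$ in $\U(X-\{a\})$ is again exactly the type I and type II elements contained in $U_b$. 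Since all order relations among type I and type II elements are literally preserved by $\varphi$, it follows that $\varphi$ is an isomorphism of posets, so $\U(X)$ and $\U(X-\{a\})$ are homeomorphic.

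I expect the main obstacle to be the bookkeeping of the first paragraph: pinning down exactly which sets $U_A^X$, and which of their connected components, occur in $\U(X)$ and in $\U(X-\{a\})$, and in particular verifying the key identities $\widehat U_a=U_b$ and $U_a\cap U_{A'}^X=U_b\cap U_{A'}^X$. Once that description is in place, both statements reduce to fairly mechanical applications of the dbp-- and ubp--retract characterisations already recorded in the excerpt.
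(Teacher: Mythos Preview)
Your argument is correct. One caveat: your first-paragraph claim that in case~(a) the set $\U(X-\{a\})$ consists of ``the very same type~I and type~II elements'' is not quite right---since $\mxl[X-\{a\}]=M$ there, $\U(X-\{a\})$ consists only of type~I elements, and type~II elements need not be type~I (e.g.\ take $X$ with $\mxl[X]=\{a,a_1,a_2\}$, $b<a$, $b<a_1$, and an extra $c<a_1,a_2$ incomparable to $b$: then $\{b\}$ is type~II but not type~I). Fortunately your proof of~(a) never uses this; it only needs the inclusion $\U(X-\{a\})\subseteq\U(X)-\{U_a\}$, which does hold.

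For part~(a) your route is close to the paper's: the down-beat-point argument is the same, and for the ubp--retract you invoke the $F_x\cap A$--criterion (Theorem~\ref{theo_Ux_cap_A}) together with Proposition~\ref{prop_conexo_minimal}, whereas the paper uses the equivalent characterisation via a retraction (Theorem~\ref{theo_dbpr_equivalences}), taking the maps $\U(i)$ and $\U(r)$ and checking $\U(r)\U(i)=\id$ and $\U(i)\U(r)\geq\id$ with the help of Proposition~\ref{prop_C_cap_A}. These are essentially two sides of the same coin.

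For part~(b) the paper takes a genuinely different and much shorter route. In case~(b) one has $\widehat F_b=\{a\}$, so $b$ is an \emph{up} beat point of $X$; Proposition~\ref{prop_U_beat_points}(b) then gives $\U(X)\cong\U(X-\{b\})$ immediately. Finally $X-\{b\}\cong X-\{a\}$ via the map that is the identity on $X-\{a,b\}$ and sends $a\mapsto b$ (this is an order isomorphism precisely because $\widehat U_a=U_b$ and $\widehat F_b=\{a\}$), whence $\U(X-\{b\})\cong\U(X-\{a\})$. Your explicit bijection $\varphi$ works, but the paper's observation lets it reuse the already-proved invariance of $\U$ under removal of up beat points and avoids all the type~I/type~II bookkeeping.
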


\begin{proof}\ 

($a$) Since $b\notin\mxl[X-\{a\}]$ then there exists $d\in\mxl[X]$ such that $d>b$ and $d\neq a$. Hence $U_b=U_{ad}$ and thus $U_b\in \U(X)$. Clearly $U_b < U_a$. We will prove that $U_b = \max \widehat U^{\U(X)}_{U_a}$. Let $C\in \U(X)$ be such that $C<U_a$. Clearly $a\notin C$ (since otherwise $U_a\subseteq C$). Thus, $C\subseteq \widehat U_a = U_b$. Therefore, $U_a$ is a down beat point of $\U(X)$.

Let $i\colon X-\{a\}\to X$ be the inclusion map and let $r\colon X\to X-\{a\}$ be the retraction which corresponds to the removal of the beat point $a$. Note that $\U(X-\{a\})\subseteq \U(X)-\{U_a\}$, and hence $\U(i)(D)=D$ for all $D\in\U(X-\{a\})$. Then $\U(i)\colon \U(X-\{a\})\to \U(X)$ is the inclusion map.

Let $D\in\U(X-\{a\})$. Note that $\U(r)\U(i)(D)=\U(r)(D)=D$ since $a\notin D$. Hence $\U(r)\U(i)=\id_{\U(X-\{a\})}$.

Let $C\in \U(X)-\{U_a\}$. Applying \ref{prop_C_cap_A} we obtain that $C=C-\{a\}=r(C)\subseteq\U(r)(C)=\U(i)\U(r)(C)$. From  
\ref{theo_dbpr_equivalences} it follows that $\U(X-\{a\})$ is a ubp-retract of $\U(X)-\{U_a\}$.

($b$) Since $b\in\mxl[X-\{a\}]$, $b$ is an up beat point of $X$. Hence $\U(X)$ is homeomorphic to $\U(X-\{b\})$ by \ref{prop_U_beat_points}, which is homeomorphic to $\U(X-\{a\})$ since $X-\{b\}$ and $X-\{a\}$ are homeomorphic.
\end{proof}

\begin{prop}
Let $X$ be a finite $T_0$--space. Then $\U(X)$ does not have up beat points and $\F(X)$ does not have down beat points.
\end{prop}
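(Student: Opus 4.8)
The plan is to prove the assertion for $\U$ first and then deduce the one for $\F$ by duality. By Proposition~\ref{prop_C_op} we have $\F(X)=(\U(X^{\op}))^{\op}$ as posets, and since passing to the opposite poset interchanges $\widehat{U}_x$ with $\widehat{F}_x$ and maxima with minima, a down beat point of $\F(X)$ is exactly an up beat point of $\U(X^{\op})$; hence the claim for $\F(X)$ reduces to the claim for $\U$ applied to the finite $T_0$--space $X^{\op}$, and it suffices to show that $\U(X)$ has no up beat points.

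To prove this I would argue by contradiction. Recall that the order of $\U(X)$ is inclusion, so if $C\in\U(X)$ were an up beat point, then $\{D\in\U(X)\mid C\subsetneq D\}$ (which is $\widehat{F}_C$ computed in $\U(X)$) would be nonempty and have a minimum element $C'$. The first step is to compare $\ub{C}$ and $\ub{C'}$: from $C\subseteq C'$ one gets $\ub{C'}\subseteq\ub{C}$ at once, and I claim this inclusion is strict. Indeed, if $\ub{C'}=\ub{C}$, then by Corollary~\ref{coro_cc_of_U_Bsharp_F_Bflat} both $C$ and $C'$ would be connected components of the single space $U_{\ub{C}}=U_{\ub{C'}}$; since $\varnothing\neq C\subseteq C'$ this forces $C=C'$, contradicting $C\subsetneq C'$.

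The second step exploits this strictness. Choose $a_0\in\ub{C}\setminus\ub{C'}$; then $a_0\in\mxl[X]$, and by the definition of $\ub{(\cdot)}$ we have $C\subseteq U_{a_0}$ but $C'\not\subseteq U_{a_0}$, while $U_{a_0}\in\U(X)$ by Remark~\ref{rem_cc_of_U_F_are_open_closed}. Moreover $U_{a_0}\neq C$: were they equal, $C=U_{a_0}$ would be a maximal element of $\C(X)$ — hence of its subspace $\U(X)$ — by Proposition~\ref{prop_mnl_mxl_de_C(X)}, so $\{D\in\U(X)\mid C\subsetneq D\}$ would be empty, contradicting that $C$ is an up beat point. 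Therefore $C\subsetneq U_{a_0}$, so $U_{a_0}$ lies in the set whose minimum is $C'$, whence $C'\subseteq U_{a_0}$; this contradicts $C'\not\subseteq U_{a_0}$. Thus $\U(X)$ has no up beat points, and the statement for $\F$ follows as explained.

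I expect the routine points to be the identification of the order on $\U(X)$ with inclusion and the inclusion $\ub{C'}\subseteq\ub{C}$; the step carrying the real content is the strictness $\ub{C'}\subsetneq\ub{C}$, since it is precisely what produces a maximal open set $U_{a_0}$ sitting strictly above $C$ but not above the purported cover $C'$, which is what collapses the beat-point configuration.
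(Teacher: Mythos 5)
Your proof is correct and follows essentially the same route as the paper: both arguments come down to showing that $\ub{C}$ coincides with $\ub{}$ of the cover of $C$ (using that the maximal elements $U_a$ of $\U(X)$ lying above $C$ must lie above $\min\widehat F_C$, since $C$ itself is not maximal), and then invoking Corollary \ref{coro_cc_of_U_Bsharp_F_Bflat} to force $C$ to equal its cover, a contradiction. You merely package this as a two-case contradiction and make explicit the duality reduction for $\F(X)$ that the paper leaves as ``similar''.
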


\begin{proof}
Suppose that $C$ is an up beat point of $\U(X)$. Let $D=\min \widehat F^{\U(X)}_C$. Then 
\begin{displaymath}
\mxl[\U(X)]\cap F^{\U(X)}_D = \mxl[\U(X)]\cap \widehat F^{\U(X)}_C = \mxl[\U(X)]\cap F^{\U(X)}_C
\end{displaymath}
Thus, by \ref{prop_mnl_mxl_de_C(X)},
\begin{align*}
\ub C &= \{a \in \mxl[X] \mid  U_a\in F^{\U(X)}_C\}  = \{a \in \mxl[X] \mid  U_a\in F^{\U(X)}_D\} = \ub D
\end{align*}
Then, by \ref{coro_cc_of_U_Bsharp_F_Bflat}, $C$ and $D$ are connected components of $U_{\ub C}$ and since $C\subseteq D$, it follows that $C=D$, which entails a contradiction.

The second part of the proposition can be proved in a similar way.
\end{proof}

\section{Properties of $\U(X)$}

\begin{prop}\label{prop_Xprime_ubpr}
Let $X$ be a finite $T_0$--space such that every $C\in \U(X)$ has a maximum element. Let $X'=\{\max C \mid C\in \U(X)\}$. Then $X'$ is an ubp-retract of $X$.
\end{prop}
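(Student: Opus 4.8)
The plan is to use the characterization of ubp--retracts dual to Theorem~\ref{theo_Ux_cap_A}, namely that $X'$ is an ubp--retract of $X$ if and only if $F^X_x\cap X'$ has a minimum for every $x\in X$ (equivalently, for every $x\in X-X'$). So the goal reduces to producing, for each $x\in X$, a minimum element of $F^X_x\cap X'$. First I would record the key observation that for $a\in\mxl[X]$ we have $U_a\in\U(X)$ by Remark~\ref{rem_cc_of_U_F_are_open_closed}(2), and $\max U_a=a$ (since $a$ is the greatest element of $U_a$); hence $\mxl[X]\subseteq X'$. This already shows that $X'$ contains all maximal elements, which is a necessary condition for being an ubp--retract.

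Next, fix $x\in X$ and consider the connected subspace $\{x\}$. By Proposition~\ref{prop_conexo_minimal}(a) applied to $B=\{x\}$ (note $\ub{\{x\}}\neq\varnothing$ because $x$ lies below some maximal element), the set $\{C\in\U(X)\mid x\in C\}$ has a minimum element, which is $\CU(x)$. Let $m=\max\CU(x)\in X'$. I claim that $m=\min\bigl(F^X_x\cap X'\bigr)$. First, $x\le m$ since $x\in\CU(x)$ and $m$ is its maximum, so $m\in F^X_x$; and $m\in X'$ by construction, so $m\in F^X_x\cap X'$. For the lower bound, take any $y\in F^X_x\cap X'$, so $x\le y$ and $y=\max C$ for some $C\in\U(X)$. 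Since $C$ is an open set containing its maximum $y$ and $x\le y$, we get $x\in C$. Then $\CU(x)\le C$, i.e.\ $\CU(x)\subseteq C$, and therefore $m=\max\CU(x)\le\max C=y$. Hence $m\le y$ for all $y\in F^X_x\cap X'$, so $m$ is the minimum of $F^X_x\cap X'$.

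Since $x\in X$ was arbitrary, $F^X_x\cap X'$ has a minimum for every $x\in X$, and by the ubp--retract analogue of Theorem~\ref{theo_Ux_cap_A} we conclude that $X'$ is an ubp--retract of $X$.

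I expect the main point requiring care to be the verification that $X'$ is a subspace to which the dual of Theorem~\ref{theo_Ux_cap_A} legitimately applies, together with the cleanest justification that $\max\CU(x)$ is well defined --- that is, that $\CU(x)\in\U(X)$ indeed has a maximum. This is exactly where the hypothesis ``every $C\in\U(X)$ has a maximum element'' is used, since $\CU(x)$ is one such $C$. Everything else is a short order--theoretic argument using the minimality property of $\CU(x)$ from Proposition~\ref{prop_conexo_minimal} and the fact that elements of $\U(X)$ are open (Remark~\ref{rem_cc_of_U_F_are_open_closed}(1)), so that containing a point below the maximum forces containing that point.
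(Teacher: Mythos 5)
Your proof is correct. It hinges on exactly the same element as the paper's argument --- the point $\max\CU(x)$ --- but you route the conclusion through the order-theoretic characterization of bp--retracts (the ubp--dual of Theorem~\ref{theo_Ux_cap_A}: $F^X_x\cap X'$ has a minimum for every $x$), whereas the paper instead exhibits the retraction explicitly, defining $r(x)=\max\CU(x)$, checking that $r$ is continuous, that $ri=\id_{X'}$, and that $ir\geq\id_X$, and then invoking the functional characterization of Theorem~\ref{theo_dbpr_equivalences}. The two routes are two sides of the same coin, since for an ubp--retract the canonical retraction is precisely $x\mapsto\min(F^X_x\cap X')$; your version trades the continuity check for the verification that $\max\CU(x)$ is a lower bound of $F^X_x\cap X'$, and both verifications use the same two facts (minimality of $\CU(x)$ from Proposition~\ref{prop_conexo_minimal} and openness of elements of $\U(X)$). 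One small advantage of the paper's formulation is that it makes the retraction $r$ and the identity $\CU(z)=U_z$ for $z\in X'$ explicit, which are reused later (e.g.\ in Proposition~\ref{prop_UX_iso_a_Xprime}); your argument contains the same information implicitly. Your observation that $\mxl[X]\subseteq X'$ is a reasonable sanity check but is not actually needed once the minimum of $F^X_x\cap X'$ is produced for every $x$.
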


\begin{proof}
Let $i\colon X'\to X$ be the inclusion map. For each $x\in X$ let $C(x)=\min \{C\in \U(X) \mid x\in C \}$. Let $r\colon X \to X'$ be defined by $r(x)=\max C(x)$. 

We claim that $r$ is continuous. Indeed, let $x_1,x_2\in X$ such that $x_1\leq x_2$. Since $C(x_2)$ is an open subset of $X$ we obtain that $x_1\in C(x_2)$ and thus $C(x_1)\subseteq C(x_2)$. Hence, $r(x_1)\leq r(x_2)$.

Clearly, $ir\geq \id_X$. We will prove now that $ri=\id_{X'}$. Let $z\in X'$. Then there exists $C\in \U(X)$ such that $z=\max C$. Since $C(z)$ is an open subset of $X$ we obtain that $U_z\subseteq C(z) \subseteq C \subseteq U_z$. It follows that $ri(z)=z$.

Therefore, $X'$ is an ubp-retract of $X$ by \ref{theo_dbpr_equivalences}.
\end{proof}

\begin{rem}
Observe that, under the hypotheses of the previous proposition, from \ref{prop_U_beat_points} and \ref{prop_Xprime_ubpr} we obtain that $\U(X')\cong \U(X)$.
\end{rem}

\begin{prop} \label{prop_UX_iso_a_Xprime}
Let $X$ be a finite $T_0$--space such that every $C\in \U(X)$ has a maximum element. Let $X'=\{\max C \mid C\in \U(X)\}$. 
Then there exists mutually inverse isomorphisms $\varphi\colon \U(X) \to X'$ and $\psi \colon X' \to \U(X)$ given by $\varphi(C)=\max C$ and $\psi(z)=U^X_z$.
\end{prop}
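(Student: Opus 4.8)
The plan is to verify, by direct inspection, that $\varphi$ and $\psi$ are well defined, that they are mutually inverse, and that both are order-preserving; since an order isomorphism of posets is precisely a homeomorphism of the associated finite $T_0$--spaces, this establishes the statement. The whole argument rests on one small lemma: if $z\in X'$ and $z=\max C$ for some $C\in\U(X)$, then $C=U^X_z$. To see this, note that $C$ is an open subset of $X$ (remark \ref{rem_cc_of_U_F_are_open_closed}) containing $z$, so $U^X_z\subseteq C$; conversely every $w\in C$ satisfies $w\leq\max C=z$, hence $w\in U^X_z$, so $C\subseteq U^X_z$. This is exactly the chain of inclusions already exploited in the proof of proposition \ref{prop_Xprime_ubpr}.

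From this lemma everything follows formally. First, $\varphi$ is well defined since $\max C$ exists by hypothesis and lies in $X'$ by the definition of $X'$; and $\psi$ is well defined because for $z\in X'$, writing $z=\max C$, the lemma gives $\psi(z)=U^X_z=C\in\U(X)$. Next, $\varphi$ and $\psi$ are mutually inverse: for $z\in X'$ we have $\varphi\psi(z)=\max U^X_z=z$ because $z$ is the greatest element of $U^X_z$; and for $C\in\U(X)$, setting $z=\max C$, the lemma gives $\psi\varphi(C)=U^X_{\max C}=C$.

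Finally I would check monotonicity. If $C_1\leq C_2$ in $\U(X)$, i.e.\ $C_1\subseteq C_2$, then $\max C_1\in C_1\subseteq C_2$, so $\max C_1\leq\max C_2$ in $X$ and hence in the subspace $X'$; thus $\varphi$ is order-preserving. If $z_1\leq z_2$ in $X'$ then $U^X_{z_1}\subseteq U^X_{z_2}$, i.e.\ $\psi(z_1)\leq\psi(z_2)$ in $\U(X)$; thus $\psi$ is order-preserving. A pair of mutually inverse order-preserving maps between posets are automatically isomorphisms, which completes the argument.

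The argument is essentially routine, so there is no serious difficulty; the one point that deserves a little care — and the step I would flag as the crux — is the well-definedness of $\psi$, namely that $U^X_z\in\U(X)$ for every $z\in X'$, which is precisely what the identity $C=U^X_z$ of the opening lemma supplies. (Note that proposition \ref{prop_Xprime_ubpr} is not logically needed for this proof, although its proof technique is what makes the key inclusion transparent.)
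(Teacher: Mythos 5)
Your proposal is correct and follows essentially the same route as the paper: the entire content is the identity $C=U^X_{\max C}$ for $C\in\U(X)$ (which the paper obtains by the same open-set argument, citing the proof of \ref{prop_Xprime_ubpr}), after which the paper simply says ``the result follows'' while you spell out the routine verifications of well-definedness, invertibility and monotonicity. No gaps.
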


\begin{proof}
Observe that if $C\in \U(X)$ and $\max C=z$ then, as in the proof of \ref{prop_Xprime_ubpr}, we obtain that $C=U^X_z$. The result follows.
\end{proof}

\begin{lemma} \label{lemma_CC_of_UX}
Let $X$ be a finite $T_0$--space. Let $B\subseteq \mxl[\U(X)]$ be a non-empty subset. Let $A=\{a\in\mxl[X]\mid U^X_a\in B\}$. Then 
\begin{enumerate}[(a)]
\item $\mxl[U^{\U(X)}_B]=\mathcal{C}(U^X_A)$, and
\item $\mathcal{C}(U^{\U(X)}_B)=\{U_M\mid M\in\mathcal{C}(U^X_A)\}$.
\end{enumerate}
\end{lemma}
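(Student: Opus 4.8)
The plan is to unwind the definitions on both sides and use Proposition \ref{prop_conexo_minimal} (together with Corollary \ref{coro_cc_of_U_Bsharp_F_Bflat}) as the main bridge. Recall that $B\subseteq\mxl[\U(X)]$, so by Proposition \ref{prop_mnl_mxl_de_C(X)} each element of $B$ is of the form $U^X_a$ for a unique $a\in\mxl[X]$, which is exactly what the set $A$ records; thus $B=\{U^X_a\mid a\in A\}$ and the correspondence $a\leftrightarrow U^X_a$ between $A$ and $B$ is a bijection. The set $U^{\U(X)}_B=\bigcap_{a\in A}U^{\U(X)}_{U^X_a}$ consists, by the definition of the order on $\U(X)$, of those $C\in\U(X)$ with $C\subseteq U^X_a$ for every $a\in A$, i.e.\ with $A\subseteq \ub C$, i.e.\ with $C\subseteq U^X_A$. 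So $U^{\U(X)}_B=\{C\in\U(X)\mid C\subseteq U^X_A\}$.

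For part (a): the maximal elements of $U^{\U(X)}_B$ are the $C\in\U(X)$ maximal with respect to $C\subseteq U^X_A$. I claim these are precisely the connected components of $U^X_A$. First, if $M\in\mathcal{C}(U^X_A)$ then $M$ is a non-empty connected subset of $U^X_a$ for each $a\in A$, so $A\subseteq\ub M$, and by Proposition \ref{prop_conexo_minimal} the connected component of $U^X_{\ub M}$ containing $M$ lies in $U^X_A$ (since $A\subseteq\ub M$ gives $U^X_{\ub M}\subseteq U^X_A$) and is a connected set containing $M$, hence equals $M$; thus $M\in\U(X)$ and $M\in U^{\U(X)}_B$. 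Conversely any $C\in U^{\U(X)}_B$ is a connected subset of $U^X_A$, hence contained in some component $M\in\mathcal{C}(U^X_A)$, and by the previous sentence $M\in\U(X)$, so $C\leq M$ in $\U(X)$. Hence the maximal elements of $U^{\U(X)}_B$ are exactly $\mathcal{C}(U^X_A)$, each such component being in $\U(X)$ and maximal there because it is not properly contained in any connected subset of $U^X_A$.

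For part (b): I would show the connected components of the subspace $U^{\U(X)}_B$ of $\U(X)$ are indexed by $\mathcal{C}(U^X_A)$ via $M\mapsto U_M$, where (using the notation of Remark \ref{rem_cc_of_U_F_are_open_closed} and the definition of the order) $U_M=U^{\U(X)}_M$ should be read as the down-set $\{C\in\U(X)\mid C\leq M\}$, i.e.\ $\{C\in\U(X)\mid C\subseteq M\}$ — note this is contained in $U^{\U(X)}_B$ since $M\subseteq U^X_A$. Given $M\in\mathcal{C}(U^X_A)$, the set $\{C\in\U(X)\mid C\subseteq M\}$ has maximum $M$ by part (a) applied inside, hence is contained in a single connected component of $U^{\U(X)}_B$; conversely every $C\in U^{\U(X)}_B$ lies below the unique $M\in\mathcal{C}(U^X_A)$ containing it, so $C\in U_M$, and distinct $M$'s are incomparable maximal elements lying in distinct components. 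Checking that each $U_M$ really is a full connected component (connectedness via the common upper bound $M$, and no edges between distinct $U_M$'s because an element of $U_{M_1}\cap U_{M_2}$ would be a connected subset of $U^X_A$ meeting two components) finishes it.

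The main obstacle I anticipate is bookkeeping the three-layered notation correctly: $U^{\U(X)}_{(-)}$ is the minimal-open-set operator in the space $\U(X)$, whose order is $\subseteq$, so one must consistently translate ``$C_1\le C_2$ in $\U(X)$'' into ``$C_1\subseteq C_2$ as subsets of $X$'' and not confuse $U^X_A$ (an open subset of $X$) with $U^{\U(X)}_B$ (a subset of $\U(X)$). Once that dictionary is fixed, each step is a short application of Proposition \ref{prop_conexo_minimal} and local connectedness; the genuinely substantive point is the identity $U^X_{\ub M}\subseteq U^X_A$ for $M\in\mathcal{C}(U^X_A)$, which is what forces each component of $U^X_A$ to already be an element of $\U(X)$.
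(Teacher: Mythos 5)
Your proposal is correct and follows essentially the same route as the paper: for (a) both arguments identify $U^{\U(X)}_B$ with $\{C\in\U(X)\mid C\subseteq U^X_A\}$ and compare maximal such $C$ with the components of $U^X_A$, and for (b) both exhibit $U^{\U(X)}_B$ as the disjoint union of the open connected down-sets $U^{\U(X)}_M$. The only cosmetic difference is that you invoke Proposition \ref{prop_conexo_minimal} to see that each $M\in\mathcal{C}(U^X_A)$ lies in $\U(X)$, whereas this is immediate from the definition of $\U(X)$ since $A$ is a non-empty subset of $\mxl[X]$.
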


\begin{proof}\ 

($a$) First note that $B=\{U^X_a \mid a \in A\}$ by \ref{prop_mnl_mxl_de_C(X)}.

Let $D\in \mxl[U^{\U(X)}_B]$. Then $D\subseteq U^X_A$. Since $D$ is connected, there exists a connected component $D_0\in\mathcal{C}(U^X_A)$ such that $D\subseteq D_0$. It follows that $D_0\in U^{\U(X)}_B$. From the maximality of $D$ we obtain that $D=D_0$ and hence $D\in \mathcal{C}(U^X_A)$.

Now, let $D\in \mathcal{C}(U^X_A)$. Then, $D\in U^{\U(X)}_B$. Let $C\in U^{\U(X)}_B$ such that $D\subseteq C$. Then $C\subseteq U^X_A$, and since $C$ is connected it follows that $C\subseteq D$. Thus, $C=D$. Therefore, $D\in \mxl[U^{\U(X)}_B]$.

($b$) From ($a$) we obtain that 
\begin{align*}
\displaystyle U^{\U(X)}_B=\bigcup_{M\in \mathcal{C}(U^X_A)} U^{\U(X)}_M.
\end{align*}
Now, observe that if $M_1,M_2\in \mathcal{C}(U^X_A)$ are such that $M_1\neq M_2$ then $U^{\U(X)}_{M_1}\cap U^{\U(X)}_{M_2}=\varnothing$ since $M_1\cap M_2 = \varnothing$. The result follows.
\end{proof}

\begin{prop} \label{prop_UUX_isomorphic_to_UX}
Let $X$ be a finite $T_0$--space. Then there exists mutually inverse isomorphisms $\varphi\colon \U(\U(X)) \to \U(X)$ and $\psi \colon \U(X) \to \U(\U(X))$ given by $\varphi(D)=\max D$ and $\psi(C)=U^{\U(X)}_C$.
\end{prop}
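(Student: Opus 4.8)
The plan is to reduce the statement to Proposition~\ref{prop_UX_iso_a_Xprime} applied with $\U(X)$ in place of $X$. That proposition requires that every element of $\U(\U(X))$ have a maximum element, so the first task is to verify this hypothesis; the second is to check that the resulting set of maxima is exactly $\U(X)$.

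I would begin by describing the elements of $\U(\U(X))$ concretely. By Proposition~\ref{prop_mnl_mxl_de_C(X)}, $\mxl[\U(X)]=\{U^X_a\mid a\in\mxl[X]\}$, and the assignment $a\mapsto U^X_a$ is a bijection from $\mxl[X]$ onto $\mxl[\U(X)]$; hence every non-empty $B\subseteq\mxl[\U(X)]$ with $U^{\U(X)}_B\neq\varnothing$ has the form $B=\{U^X_a\mid a\in A\}$ for a non-empty $A\subseteq\mxl[X]$ with $U^X_A\neq\varnothing$. By Lemma~\ref{lemma_CC_of_UX}(b), the connected components of $U^{\U(X)}_B$ are precisely the sets $U^{\U(X)}_M$ with $M\in\mathcal{C}(U^X_A)$, and these are elements of $\U(X)$. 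Conversely, for any $M\in\U(X)$ we have $\ub M\neq\varnothing$ by Lemma~\ref{lema-para_conexo_minimal} and $M\in\mathcal{C}(U^X_{\ub M})$ by Corollary~\ref{coro_cc_of_U_Bsharp_F_Bflat}, so taking $A=\ub M$ shows $U^{\U(X)}_M\in\U(\U(X))$. Thus $\U(\U(X))=\{U^{\U(X)}_M\mid M\in\U(X)\}$.

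Since the order on $\U(X)$ is inclusion, for each $M\in\U(X)$ the minimal open set $U^{\U(X)}_M$ is $\{C\in\U(X)\mid C\subseteq M\}$, which has maximum $M$. Therefore every $D\in\U(\U(X))$ has a maximum element, and the set $\{\max D\mid D\in\U(\U(X))\}$ coincides with $\U(X)$ (carrying its inclusion order). Proposition~\ref{prop_UX_iso_a_Xprime}, applied to $\U(X)$, now yields mutually inverse isomorphisms between $\U(\U(X))$ and this set of maxima, namely $D\mapsto\max D$ and $C\mapsto U^{\U(X)}_C$; these are exactly the maps $\varphi$ and $\psi$ of the statement.

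The substantive step is the middle paragraph: one must match subsets of $\mxl[\U(X)]$ with subsets of $\mxl[X]$ and use Lemma~\ref{lemma_CC_of_UX} (together with Corollary~\ref{coro_cc_of_U_Bsharp_F_Bflat} for the converse inclusion) to see that the elements of $\U(\U(X))$ are exactly the minimal open sets $U^{\U(X)}_M$ of points $M$ of $\U(X)$. After that, well-definedness, the inverse relations and monotonicity in both directions follow at once --- either directly or, as above, by quoting Proposition~\ref{prop_UX_iso_a_Xprime}.
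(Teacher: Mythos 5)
Your proposal is correct and follows essentially the same route as the paper: it verifies via Lemma~\ref{lemma_CC_of_UX}(b) that every element of $\U(\U(X))$ has a maximum, uses Corollary~\ref{coro_cc_of_U_Bsharp_F_Bflat} to see that $\{\max D\mid D\in\U(\U(X))\}=\U(X)$, and then invokes Proposition~\ref{prop_UX_iso_a_Xprime} applied to $\U(X)$. The only difference is that you spell out the bijection between subsets of $\mxl[\U(X)]$ and subsets of $\mxl[X]$ more explicitly than the paper does.
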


\begin{proof}
First observe that every $C\in \U(\U(X))$ has a maximum element by item ($b$) of \ref{lemma_CC_of_UX}. Now, if $C\in\U(X)$ then $C$ is a connected component of $U^X_{\ub C}$ and hence $U_C$ is a connected component of $U^{\U(X)}_{\{U_a\mid a \in \ub C\}}$ by item ($b$) of \ref{lemma_CC_of_UX}. Therefore, $\{\max D \mid D\in \U(\U(X))\}=\U(X)$. The result follows from \ref{prop_UX_iso_a_Xprime}.
\end{proof}

\begin{coro}
Let $X$ be a finite $T_0$--space. Then there exists a finite $T_0$--space $Y$ such that $X\cong \U(Y)$ if and only if $X\cong \U(X)$.
\end{coro}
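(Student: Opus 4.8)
The statement is an ``if and only if'', and one direction is immediate: if $X\cong\U(X)$, then $Y=X$ is a finite $T_0$--space with $X\cong\U(Y)$, so there is nothing to prove. The content is in the forward implication, and it is essentially a two-line assembly of results already available in the excerpt.

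First I would record that the $\U$ construction preserves homeomorphism type. This is exactly Proposition \ref{prop_Uf_homeo}: a homeomorphism $f\colon A\to B$ of finite $T_0$--spaces induces a homeomorphism $\U(f)\colon\U(A)\to\U(B)$. Hence if $X\cong\U(Y)$ for some finite $T_0$--space $Y$, applying $\U$ to that homeomorphism yields $\U(X)\cong\U(\U(Y))$.

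Next I would invoke Proposition \ref{prop_UUX_isomorphic_to_UX}, which gives a (natural-looking) isomorphism $\U(\U(Y))\cong\U(Y)$ via $D\mapsto\max D$. Chaining the isomorphisms gives
\[
\U(X)\;\cong\;\U(\U(Y))\;\cong\;\U(Y)\;\cong\;X,
\]
where the last step is the hypothesis. This completes the forward direction and hence the corollary.

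There is no real obstacle here: the only things to be careful about are (a) checking that $\U$ applied to an iso really is an iso, which is precisely part (c) of Proposition \ref{prop_Uf_homeo}, and (b) that Proposition \ref{prop_UUX_isomorphic_to_UX} is stated for an \emph{arbitrary} finite $T_0$--space, so it applies to $Y$ without extra hypotheses. Both are granted by the earlier results, so the proof is short and purely formal. If one wanted, one could phrase it slightly more symmetrically by noting that the class of finite $T_0$--spaces of the form $\U(Y)$ coincides, up to isomorphism, with the fixed points (up to isomorphism) of the operation $\U$; but the two-line argument above suffices.
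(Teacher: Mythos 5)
Your argument is correct and is exactly the one the paper intends: the paper's proof simply cites Proposition \ref{prop_Uf_homeo} and Proposition \ref{prop_UUX_isomorphic_to_UX}, which are precisely the two results you chain together. Nothing to add.
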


\begin{proof}
Follows easily from \ref{prop_Uf_homeo} and \ref{prop_UUX_isomorphic_to_UX}.
\end{proof}

\section{Other properties of the $\C$--construction}

In this section we will develop more results about the $\C$--construction. Not all of them are related to the fixed point property but are interesting on their own and permit a better understanding of this construction. In addition, these results show that the $\C$--construction is an interesting object of study.

In the following proposition we will see that if $X$ is a finite $T_0$--space, then $X$ is homotopy equivalent to the Grothendieck construction on a functor from $\U(X)$ to the category of finite posets, or equivalently to the the non-Hausdorff homotopy colimit of a $\U(X)$--diagram of finite posets \cite{fernandez2016homotopy}.

Recall that if $A$ is a finite poset, $\mathcal{P}$ is the category of finite posets and $F\colon A\to \mathcal{P}$ a functor, then the Grothendieck construction for $F$ is the poset $\int F$ with underlying set $\bigcup\limits_{a\in A} (\{a\}\times F(a))$ and with partial order defined as follows: if $a,b\in A$, $x\in F(a)$, $y\in F(b)$ then $(a,x)\leq (b,y)$ if and only if $a\leq b$ and $F(a\leq b)(x)\leq y$.

\begin{definition}
Let $X$ be a finite $T_0$--space. We define the functor $\mathcal{I}_X\colon \U(X)\to \mathcal{P}$ by $\mathcal{I}_X(C)=C$ and $\mathcal{I}_X(C\subseteq C')$ as the inclusion map $C\hookrightarrow C'$. 

We also define maps $\rho_X\colon \hocolim \mathcal{I}_X \to X$ and $\iota_X\colon X\to \hocolim \mathcal{I}_X$ by $\rho_X(C,x) = x$ for all $(C,x)\in \hocolim \mathcal{I}_X$ and $\iota(x)=(\CU(x), x)$ for all $x\in X$.
\end{definition}

\begin{prop} \label{prop_X_dbpr_hocolim}
Let $X$ be a finite $T_0$--space. Then, the maps $\rho_X\colon \hocolim \mathcal{I}_X \to X$ and $\iota_X\colon X\to \hocolim I_X$ are continuous, $\rho_X \iota_X=\id_X$ and $\iota_X\rho_X \leq\id_{\hocolim \mathcal{I}_X}$.

In particular, $\iota_X(X)$ is a dbp--retract of $\hocolim \mathcal{I}_X$ and $X$ is homotopy equivalent to $\hocolim \mathcal{I}_X$.
\end{prop}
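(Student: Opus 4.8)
The plan is to verify each assertion in turn, with the bulk of the work going into continuity of $\iota_X$ and the inequality $\iota_X\rho_X\leq\id$.

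\medskip

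\emph{Continuity of $\rho_X$.} This is the easy direction. The Grothendieck construction $\hocolim\mathcal{I}_X$ has partial order $(C,x)\leq(C',x')$ iff $C\subseteq C'$ and $x\leq x'$ (the transition maps being inclusions, $\mathcal{I}_X(C\subseteq C')(x)=x$). So if $(C,x)\leq(C',x')$ then in particular $x\leq x'$, hence $\rho_X(C,x)=x\leq x'=\rho_X(C',x')$. Thus $\rho_X$ is order-preserving, hence continuous.

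\medskip

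\emph{Continuity of $\iota_X$.} Let $x\leq y$ in $X$; I must show $(\CU(x),x)\leq(\CU(y),y)$, i.e. $\CU(x)\subseteq\CU(y)$ and $x\leq y$ (the latter is given). For the inclusion: $\CU(y)$ is an open subset of $X$ (it lies in $\U(X)$, whose elements are open by \ref{rem_cc_of_U_F_are_open_closed}) containing $y$, and $x\leq y$ means $x\in U_x\subseteq U_y$; since $\CU(y)$ is open and contains $y$ it contains $U_y$, hence contains $x$. Then $\CU(x)=\min\{C\in\U(X)\mid x\in C\}\subseteq\CU(y)$ because $\CU(y)$ is itself a member of that set. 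This is exactly the argument already used in the proof that the map $f(x)=\CU(x)$ is order-preserving earlier in the paper, so it is routine.

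\medskip

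\emph{The two relations.} First $\rho_X\iota_X(x)=\rho_X(\CU(x),x)=x$, so $\rho_X\iota_X=\id_X$ on the nose. Next, for $(C,x)\in\hocolim\mathcal{I}_X$ we have $\iota_X\rho_X(C,x)=\iota_X(x)=(\CU(x),x)$, and I claim $(\CU(x),x)\leq(C,x)$: indeed $x\in C$ with $C\in\U(X)$ forces $\CU(x)\subseteq C$ by minimality of $\CU(x)$, and the second coordinates agree, so the pair is $\leq(C,x)$ in the Grothendieck order. Hence $\iota_X\rho_X\leq\id_{\hocolim\mathcal{I}_X}$.

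\medskip

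\emph{The ``in particular'' clause.} Set $g=\iota_X\rho_X\colon\hocolim\mathcal{I}_X\to\hocolim\mathcal{I}_X$. From the above, $g\leq\id$, and $g^2=\iota_X(\rho_X\iota_X)\rho_X=\iota_X\rho_X=g$, and $g(\hocolim\mathcal{I}_X)=\iota_X(X)$ since $\rho_X$ is onto. By the equivalence (2)$\Rightarrow$(1) of \ref{theo_dbpr_equivalences}, $\iota_X(X)$ is a dbp--retract of $\hocolim\mathcal{I}_X$; in particular it is a strong deformation retract, so the inclusion $\iota_X(X)\hookrightarrow\hocolim\mathcal{I}_X$ is a homotopy equivalence. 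Finally $\iota_X$ corestricts to a bijection $X\to\iota_X(X)$ which is continuous ($\iota_X$ is) with continuous inverse (the corestriction of $\rho_X$, since $\rho_X\iota_X=\id$), hence a homeomorphism; composing, $X\cong\iota_X(X)\he\hocolim\mathcal{I}_X$.

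\medskip

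I do not anticipate a genuine obstacle here: every step reduces to the explicit description of the Grothendieck order together with the minimality of $\CU(x)$ and openness of elements of $\U(X)$, all already established. The only point requiring a little care is making sure the transition maps of $\mathcal{I}_X$ are literally inclusions (so that $\mathcal{I}_X(C\subseteq C')(x)=x$ and the Grothendieck order simplifies to $C\subseteq C'$ and $x\leq y$); this is immediate from the definition of $\mathcal{I}_X$.
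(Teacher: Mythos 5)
Your proof is correct and follows essentially the same route as the paper: continuity of both maps from the explicit Grothendieck order (with inclusions as transition maps), the identity $\rho_X\iota_X=\id_X$ by direct computation, and $\iota_X\rho_X\leq\id$ from the minimality of $\CU(x)$ among elements of $\U(X)$ containing $x$. Your explicit verification of the ``in particular'' clause via condition (2) of \ref{theo_dbpr_equivalences} is exactly what the paper leaves implicit.
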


\begin{proof}
Let $(C_1,x_1),(C_2,x_2)\in\hocolim \mathcal{I}_X$ such that $(C_1,x_1)\leq (C_2,x_2)$. Then $C_1\subseteq C_2$ and $x_1=\inc(x_1)\leq x_2$ in $C_2$. Hence $\rho_X(C_1,x_1) = x_1\leq x_2 = \rho_X(C_2,x_2)$ in $X$. Thus, $\rho_X$ is continuous.
 
Now, let $x_1,x_2\in X$ such that $x_1\leq x_2$. Since $x_2\in \CU(x_2)$ and $\CU(x_2)$ is an open subset of $X$, then $x_1\in \CU(x_2)$. Hence, $\CU(x_1)\leq \CU(x_2)$ and thus $\iota_X(x_1)=(\CU(x_1),x_1)\leq (\CU(x_2),x_2)=\iota(x_2)$. Therefore, $\iota_X$ is continuous. 

Now, for all $x\in X$, $\rho_X \iota_X(x)=\rho_X(\CU(x),x)=x$. Hence $\rho_X \iota_X=\id_X$.
 
On the other hand, if $(C,x)\in\hocolim \mathcal{I}_X$, then $x\in C$ and $\iota_X \rho_X(C,x)=\iota_X(x)=(\CU(x),x)\leq (C,x)$. Hence $\iota_X \rho_X \leq \id_{\hocolim \mathcal{I}_X}$.
\end{proof}

\begin{rem}
Note that if $X$ is a finite $T_0$--space then $\mxl[\hocolim \mathcal{I}_X]=\{(U_a,a)\mid a\in \mxl[X]\}$.
\end{rem}

\begin{lemma}
Let $X$ be a finite $T_0$--space. For each $D\in\U(X)$ let $\widetilde D=\{(C,x)\in \hocolim \mathcal{I}_X \mid C \subseteq D\}$.
\begin{enumerate}[(a)]
\item Let $B\subseteq\mxl[\hocolim \mathcal{I}_X]$. Let $A=\{a\in\mxl[X]\mid (U_a,a)\in B\}$. Then 
\begin{displaymath}
\mathcal{C}(U_{B}^{\hocolim \mathcal{I}_X})=\{\widetilde D \mid D\in\mathcal{C}(U_A^X)\}.
\end{displaymath}
\item Let $C\in\U(X)$. Then $\U(\iota_X)(C)=\widetilde C$.
\end{enumerate}
\end{lemma}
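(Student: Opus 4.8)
The plan is to prove the two items essentially in parallel, using that the Grothendieck construction $\hocolim \mathcal{I}_X$ is just $X$ "unfolded" over $\U(X)$, so its combinatorics of minimal open sets closely mirrors those of $X$ itself. First I would record the preliminary observation that for $D\in\U(X)$ the set $\widetilde D$ is a connected subspace of $\hocolim \mathcal{I}_X$: it contains $(D,x)$ for every $x\in D$ (these are pairwise comparable with the various $(C,x)$ below them), and more precisely $\iota_X(D)=\{(\CU(x),x)\mid x\in D\}\subseteq \widetilde D$ with $(\CU(x),x)\le (D,x)$, so $\widetilde D$ deformation-retracts onto the connected set $D\cong\iota_X(D)$ via $\iota_X\rho_X$; hence $\widetilde D$ is connected. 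I would also note $\widetilde D$ is an open subspace, being a union of minimal open sets $U^{\hocolim\mathcal{I}_X}_{(C,x)}$ for $(C,x)$ with $C\subseteq D$ — indeed if $(C',x')\le(C,x)$ with $C\subseteq D$ then $C'\subseteq C\subseteq D$.

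For item (a): by the remark preceding the lemma, $\mxl[\hocolim\mathcal{I}_X]=\{(U_a,a)\mid a\in\mxl[X]\}$, so $B=\{(U_a,a)\mid a\in A\}$ and $U^{\hocolim\mathcal{I}_X}_B=\bigcap_{a\in A}U^{\hocolim\mathcal{I}_X}_{(U_a,a)}$. The key computation is to identify this intersection with $\{(C,x)\mid x\in C\subseteq X,\ x\in U^X_A\ \text{(i.e. }A\subseteq\ub x\text{)}\}$; concretely, $(C,x)\le (U_a,a)$ iff $C\subseteq U_a$ and $x\le a$ in $U_a$, and intersecting over $a\in A$ gives $C\subseteq U_A$ together with $x\in U^X_A$ — and since $x\in C\subseteq U_A$ the first condition is automatic once the second holds and $C$ is any element of $\U(X)$ with $x\in C\subseteq U_A$. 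So $U^{\hocolim\mathcal{I}_X}_B=\{(C,x)\mid x\in U^X_A,\ C\in\U(X),\ x\in C\subseteq U_A\}$. Now decompose $U^X_A$ into its connected components $\mathcal C(U^X_A)=\{D_1,\dots,D_m\}$: any $C\in\U(X)$ with $x\in C\subseteq U_A$ is connected, hence lies in a single $D_j$, and conversely any such $C\subseteq D_j$ pairs with any $x\in C$ to land in $\widetilde{D_j}$. This shows $U^{\hocolim\mathcal{I}_X}_B=\bigsqcup_{j}\widetilde{D_j}$ as a set, the union being disjoint because the $D_j$ are. Combined with the opening paragraph ($\widetilde{D_j}$ open and connected) this is exactly the statement that $\mathcal C(U^{\hocolim\mathcal{I}_X}_B)=\{\widetilde D\mid D\in\mathcal C(U^X_A)\}$.

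For item (b): let $C\in\U(X)$. By Remark \ref{rem_def_de_Uf_and_Ff}, $\U(\iota_X)(C)$ is the connected component of $U^{\hocolim\mathcal{I}_X}_{\ub{\iota_X(C)}}$ containing $\iota_X(C)$. First I would check $\ub{\iota_X(C)}=\{(U_a,a)\mid a\in\ub C\}$ inside $\hocolim\mathcal{I}_X$: a maximal element $(U_a,a)$ satisfies $\iota_X(C)\subseteq U^{\hocolim\mathcal{I}_X}_{(U_a,a)}$ iff for every $x\in C$ we have $(\CU(x),x)\le(U_a,a)$, i.e. $\CU(x)\subseteq U_a$ and $x\le a$; since $x\in\CU(x)$ this forces $x\le a$ for all $x\in C$, which is precisely $a\in\ub C$, and conversely $a\in\ub C$ gives $C\subseteq U_a$ hence $\CU(x)\subseteq U_a$ for all $x\in C$ by minimality. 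Setting $A=\ub C$ and $B=\{(U_a,a)\mid a\in A\}$, part (a) gives $\mathcal C(U^{\hocolim\mathcal{I}_X}_B)=\{\widetilde D\mid D\in\mathcal C(U^X_{\ub C})\}$. By Corollary \ref{coro_cc_of_U_Bsharp_F_Bflat}, $C$ is itself one of the components $D\in\mathcal C(U^X_{\ub C})$, and $\iota_X(C)\subseteq\widetilde C$ while $\iota_X(C)\not\subseteq\widetilde{D}$ for $D\ne C$ (since $\iota_X(C)$ projects onto $C$ under $\rho_X$, and $\widetilde D$ projects into $D$). Hence the component of $U^{\hocolim\mathcal{I}_X}_B$ containing $\iota_X(C)$ is $\widetilde C$, i.e. $\U(\iota_X)(C)=\widetilde C$.

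The main obstacle I anticipate is the bookkeeping in item (a): correctly pinning down when $(C,x)\le(U_a,a)$ in the Grothendieck construction (one must use that $\mathcal I_X(C\subseteq U_a)$ is the inclusion, so the condition on second coordinates is just $x\le a$ in $X$), and then being careful that "$C\subseteq U_A$" is a consequence of "$x\in C\subseteq$ some component of $U_A$" rather than an independent constraint — i.e. that the description of $U^{\hocolim\mathcal{I}_X}_B$ really does factor through the connected components of $U^X_A$. Once that set-level equality is nailed down, everything else is an application of the already-established facts that $\widetilde D$ is open and connected and of Remark \ref{rem_def_de_Uf_and_Ff} together with Corollary \ref{coro_cc_of_U_Bsharp_F_Bflat}.
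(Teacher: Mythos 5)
Your proposal is correct and follows essentially the same route as the paper: identify $U^{\hocolim \mathcal{I}_X}_B$ set-theoretically as the disjoint union of the $\widetilde D$ over $D\in\mathcal{C}(U^X_A)$, check each $\widetilde D$ is open and connected, and then deduce (b) by computing $\ub{\iota_X(C)}$ and locating the component containing $\iota_X(C)$ via Remark \ref{rem_def_de_Uf_and_Ff} and Corollary \ref{coro_cc_of_U_Bsharp_F_Bflat}. The only cosmetic difference is your connectedness argument for $\widetilde D$ (via $\iota_X\rho_X\leq\id$ retracting onto $\iota_X(D)$) where the paper instead writes $\widetilde D$ as a union of minimal open sets each meeting $\{D\}\times D$; both are valid.
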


\begin{proof}\ 

($a$) Note that $B=\{(U_a,a)\mid a\in A\}$ by the previous remark.

We will prove first that $U_{B}^{\hocolim \mathcal{I}_X}=\bigcup\limits_{D\in\mathcal{C}(U_A^X)}\widetilde D$. 

Let $(C,x)\in U_{B}^{\hocolim \mathcal{I}_X}$. Then $C\subseteq U_a$ for all $a\in A$. Hence, $C\subseteq U^X_A$, and since $C$ is connected, there exists $D\in \mathcal{C}(U_A^X)$ such that $C\subseteq D$. Thus, $(C,x)\in \widetilde D$. 

Now, let $D\in\mathcal{C}(U_A^X)$ and let $(C,x)\in \widetilde D$. Then $D\subseteq U_a$ for all $a\in A$. Hence $(C,x) \leq (U_a,a)$ for all $a\in A$. Thus, $(C,x) \in U_{B}^{\hocolim \mathcal{I}_X}$.

We claim that if $D_1,D_2$ are distinct elements of $\mathcal{C}(U_A^X)$ then $\widetilde D_1\cap \widetilde D_2 = \varnothing$. Indeed, if there exists $(C,x)\in \widetilde D_1\cap \widetilde D_2$, then $C\subseteq D_1\cap D_2$ which entails a contradiction since $D_1$ and $D_2$ are different connected components of $U^X_A$.

We will prove now that $\widetilde D$ is connected for all $D\in\mathcal{C}(U_A^X)$. Let $D\in\mathcal{C}(U_A^X)$. Note that
\begin{align*}
\widetilde D &= \bigcup_{x\in D} U^{\hocolim \mathcal{I}_X}_{(D,x)}= \bigcup_{x\in D} \left( U^{\hocolim \mathcal{I}_X}_{(D,x)} \cup \{D\}\times D \right)
\end{align*}
Now, for each $x\in D$, $U^{\hocolim \mathcal{I}_X}_{(D,x)} \cup \{D\}\times D$ is a connected subspace of $\hocolim \mathcal{I}_X$ since $(D,x)\in U^{\hocolim \mathcal{I}_X}_{(D,x)} \cap \{D\}\times D$ and $U^{\hocolim \mathcal{I}_X}_{(D,x)}$ and $\{D\}\times D$ are connected. Therefore, $\widetilde D$ is also connected.

Finally, note that $\widetilde D$ is an open subset of $\hocolim \mathcal{I}_X$ for all $D\in \mathcal{C}(U_A^X)$. Since $\hocolim \mathcal{I}_X$ is a finite space, the result follows.

($b$) Let $C\in\U(X)$. If $x\in C$ then $\iota_X(x)=(\CU(x),x)\in \widetilde C$ since $\CU(x)\subseteq C$. Thus, $\iota_X(C)\subseteq\widetilde C$.

We will prove now that $\ub{\iota_X(C)} = \ub{\widetilde C}$. Clearly, $\ub{\iota_X(C)} \supseteq \ub{\widetilde C}$ since $\iota_X(C)\subseteq\widetilde C$. Let $M\in \ub{\iota_X(C)}$. Then there exists $a\in \mxl[X]$ such that $M=(U_a,a)$. Let $x\in C$. Then $\iota_X(x)=(\CU(x),x)\leq (U_a,a)$ and hence $x\in \CU(x)\subseteq U_a$. Therefore, $C\subseteq U_a$. Now, let $(C_1,x_1)\in\widetilde C$. Then $x_1\in C_1\subseteq C\subseteq U_a$. Hence, $(C_1,x_1)\leq (U_a,a)=M$. Thus, $M\in \ub{\widetilde C}$.

Now, by \ref{coro_cc_of_U_Bsharp_F_Bflat}, $\widetilde C$ is a connected component of $U^{\hocolim \mathcal{I}_X}_{\ub{\widetilde C}}$. Since $\ub{\iota_X(C)} = \ub{\widetilde C}$ and $\iota_X(C)\subseteq\widetilde C$ we obtain that $\U(\iota_X)(C)=\widetilde C$ by \ref{prop_conexo_minimal}.
\end{proof}

\begin{prop}
Let $X$ be a finite $T_0$--space. Then $\U(\hocolim \mathcal{I}_X)\cong \U(X)$. Moreover, $\U(\iota_X)\colon \U(X)\to \U(\hocolim \mathcal{I}_X)$ is a homeomorphism with inverse $\U(\rho_X)$.
\end{prop}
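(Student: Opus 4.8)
The plan is to establish the isomorphism $\U(\hocolim \mathcal{I}_X)\cong \U(X)$ by showing directly that $\U(\iota_X)$ and $\U(\rho_X)$ are mutually inverse homeomorphisms. Since both maps are already known to be continuous (being $\U$ applied to continuous maps), it suffices to prove that $\U(\rho_X)\U(\iota_X)=\id_{\U(X)}$ and $\U(\iota_X)\U(\rho_X)=\id_{\U(\hocolim \mathcal{I}_X)}$, because a continuous bijection between finite $T_0$--spaces whose inverse is continuous is a homeomorphism.

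First I would compute $\U(\rho_X)\U(\iota_X)$. By part (b) of the preceding lemma, for $C\in\U(X)$ we have $\U(\iota_X)(C)=\widetilde C$. Then I would compute $\U(\rho_X)(\widetilde C)$: since $\rho_X(\widetilde C)=\{x\mid (C',x)\in\widetilde C\text{ for some }C'\subseteq C\}$, and every such $x$ lies in $C'\subseteq C$ while conversely every $x\in C$ gives $(\CU(x),x)\in\widetilde C$ with $\rho_X(\CU(x),x)=x$, we get $\rho_X(\widetilde C)=C$. It then remains to check that $\U(\rho_X)(\widetilde C)=\rho_X(\widetilde C)$, i.e.\ that $\rho_X(\widetilde C)$ is already an element of $\U(X)$; but $\rho_X(\widetilde C)=C\in\U(X)$, so by the characterization in \ref{prop_conexo_minimal} (or \ref{rem_def_de_Uf_and_Ff}) together with the fact that $\ub{\rho_X(\widetilde C)}=\ub C$, we conclude $\U(\rho_X)(\widetilde C)=C$. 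Hence $\U(\rho_X)\U(\iota_X)=\id_{\U(X)}$.

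For the other composite, I would use part (a) of the lemma. An element of $\U(\hocolim \mathcal{I}_X)$ is a connected component of $U^{\hocolim \mathcal{I}_X}_B$ for some non-empty $B\subseteq\mxl[\hocolim \mathcal{I}_X]$; writing $A=\{a\in\mxl[X]\mid(U_a,a)\in B\}$, part (a) says the connected components of $U^{\hocolim \mathcal{I}_X}_B$ are exactly the sets $\widetilde D$ for $D\in\mathcal{C}(U^X_A)$. So every element of $\U(\hocolim \mathcal{I}_X)$ has the form $\widetilde D$ with $D\in\U(X)$, and part (b) shows $\widetilde D=\U(\iota_X)(D)$ lies in the image of $\U(\iota_X)$. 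Combined with $\U(\rho_X)\U(\iota_X)=\id_{\U(X)}$, this shows $\U(\iota_X)$ is a bijection onto $\U(\hocolim \mathcal{I}_X)$ with inverse $\U(\rho_X)$, and therefore $\U(\iota_X)\U(\rho_X)=\id_{\U(\hocolim \mathcal{I}_X)}$ as well.

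The main obstacle is the bookkeeping in the first step: one must argue carefully that applying the functorial-looking map $\U(\rho_X)$ to the set $\widetilde C$ returns exactly $C$ and not some strictly larger element of $\U(X)$, which requires verifying $\ub{\rho_X(\widetilde C)}=\ub{\widetilde C}$ and invoking that $C$ is itself the connected component of $U^X_{\ub C}$ containing $\rho_X(\widetilde C)$ (by \ref{coro_cc_of_U_Bsharp_F_Bflat}). Once the two lemma parts are in hand, the remaining verifications are routine; the surjectivity of $\U(\iota_X)$ in the last step is essentially immediate from part (a), so the proof is short.
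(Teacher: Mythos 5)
Your proposal is correct and follows essentially the same route as the paper: part (b) of the lemma gives $\U(\iota_X)(C)=\widetilde C$, the computation $\rho_X(\widetilde C)=C$ together with $C\in\U(X)$ gives $\U(\rho_X)\U(\iota_X)=\id_{\U(X)}$, and part (a) gives surjectivity of $\U(\iota_X)$. (The step you flag as the main obstacle is in fact immediate: since $\rho_X(\widetilde C)=C$ is itself an element of $\U(X)$, the minimum of $\{D\in\U(X)\mid C\subseteq D\}$ is $C$, with no need to compare $\ub{\rho_X(\widetilde C)}$ and $\ub{\widetilde C}$.)
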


\begin{proof}
Let $C\in\U(X)$. Then $\U(\iota_X)(C)=\{(D,x)\in \hocolim \mathcal{I}_X \mid D\subseteq C\}$ by the previous lemma. Thus,
\begin{align*}
\rho_X(\U(\iota_X)(C))=\bigcup_{\substack{D\in\U(X)\\ D\subseteq C}}D = C.
\end{align*}
Hence, $\U(\rho_X)(\U(\iota_X)(C))=C$. Therefore $\U(\rho_X)\U(\iota_X)=\id_{\U(X)}$.

Observe that $\U(\iota_X)$ is surjective by the previous lemma. The result follows.
\end{proof}

\begin{definition}
Let $X$ be a finite $T_0$--space. We define $q_X\colon \hocolim \mathcal{I}_X \to \U(X)$ by $q_X(C,x)=C$.
\end{definition}

\begin{prop}
Let $X$ be a finite $T_0$--space.
\begin{enumerate}
\item $q_X$ is a quotient map.
\item Let $\varphi$ and $\psi$ be the isomorphisms of proposition \ref{prop_UUX_isomorphic_to_UX}. Then the following diagram commutes
\[
\begin{tikzcd}[column sep=large, row sep=large]
\U(\hocolim \mathcal{I}_X) \rar{\U(q_X)} \drar[swap]{\U(\rho_X)} &\U(\U(X)) \dar[xshift=2pt]{\varphi}\\
&\U(X) \uar[xshift=-2pt]{\psi}
\end{tikzcd}
\]
\end{enumerate}
\end{prop}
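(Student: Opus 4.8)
The plan is to prove the two items separately, both by unwinding definitions and using the results already established for $\int \mathcal{I}_X$. For part (1), I would first show that $q_X$ is continuous: if $(C_1,x_1)\leq (C_2,x_2)$ in $\int\mathcal{I}_X$ then by definition $C_1\subseteq C_2$, i.e.\ $q_X(C_1,x_1)=C_1\leq C_2=q_X(C_2,x_2)$ in $\U(X)$. Surjectivity is immediate since every $C\in\U(X)$ is non-empty, so picking any $x\in C$ gives $q_X(C,x)=C$. To see that $q_X$ is a quotient map it then suffices (for finite spaces) to check that it is order-preserving in both directions in the appropriate sense, or more directly: a subset $V\subseteq\U(X)$ is open iff $q_X^{-1}(V)$ is open. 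One inclusion is continuity. For the converse, suppose $q_X^{-1}(V)$ is open and $C\in V$; to show $V$ is open I need $U_C^{\U(X)}\subseteq V$, i.e.\ every $C'\leq C$ lies in $V$. Given such $C'$, pick $x'\in C'$; then $(C',x')\in \int\mathcal{I}_X$, and I want $(C',x')\in q_X^{-1}(V)$, which follows once I exhibit some $(C,x)\in q_X^{-1}(V)$ with $(C',x')\leq (C,x)$. Taking any $x\in C$ with $x'\leq x$ in $C$ — which exists because $C'\subseteq C$ and $C$ is the relevant connected component, indeed $x'\in C'\subseteq C$ works with $x=x'$ when $x'\in C$, but in general one should use that $C'\subseteq C$ so $x'\in C$ — gives $(C',x')\leq(C,x')$ and $q_X(C,x')=C\in V$, hence $(C,x')\in q_X^{-1}(V)$, which is open, so $(C',x')\in q_X^{-1}(V)$ as well. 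This shows $C'\in V$, completing part (1). I expect this point — carefully matching the order on $\int\mathcal{I}_X$ with the preimage-open condition — to be the main technical care needed, though it is still routine.

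For part (2), I would argue that the triangle commutes by checking it on an arbitrary element $\widetilde D\in\U(\int\mathcal{I}_X)$, where $D\in\U(X)$ and $\widetilde D=\{(C,x)\mid C\subseteq D\}$ as in the preceding lemma (recall that by an earlier result $\U(\iota_X)$ is a homeomorphism onto $\U(\int\mathcal{I}_X)$, so every element of $\U(\int\mathcal{I}_X)$ has this form). Going down-left: $\U(\rho_X)(\widetilde D)$ is the unique element of $\U(X)$ containing $\rho_X(\widetilde D)=\bigcup_{C\subseteq D}C=D$, hence equals $D$ (this computation appears in the proof that $\U(\rho_X)\U(\iota_X)=\id$). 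Going right then down: I must compute $\U(q_X)(\widetilde D)$. By definition $\U(q_X)(\widetilde D)$ is the smallest element of $\U(\U(X))$ containing $q_X(\widetilde D)=\{C\in\U(X)\mid C\subseteq D\}$. I claim this set is exactly $U^{\U(X)}_D$, the minimal open set of $D$ in $\U(X)$: indeed $C\in U^{\U(X)}_D$ iff $C\leq D$ in $\U(X)$ iff $C\subseteq D$. Since $U^{\U(X)}_D$ is already a connected (it has maximum $D$) open subset of $\U(X)$, it belongs to $\U(\U(X))$, so $\U(q_X)(\widetilde D)=U^{\U(X)}_D$. Finally $\varphi(U^{\U(X)}_D)=\max(U^{\U(X)}_D)=D$ by the description of $\varphi$ in proposition \ref{prop_UUX_isomorphic_to_UX}. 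Hence both composites send $\widetilde D$ to $D$, and the diagram commutes.

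The main obstacle, such as it is, will be verifying the quotient-map claim in part (1): one must be a little careful that "open in $\U(X)$" is correctly translated, via the poset–topology correspondence, into a down-set condition, and that the lift $(C',x')\mapsto(C,x)$ is chosen compatibly with the order on $\int\mathcal{I}_X$; once the right element $(C,x')\in q_X^{-1}(V)$ above $(C',x')$ is produced, everything else is formal. For part (2) there is no real obstacle beyond remembering that every element of $\U(\int\mathcal{I}_X)$ is of the form $\widetilde D$ and invoking the earlier computation $\rho_X(\widetilde D)=D$; the identification $q_X(\widetilde D)=U^{\U(X)}_D$ and $\varphi(U^{\U(X)}_D)=D$ are then immediate from the definitions.
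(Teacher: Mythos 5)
The paper states this proposition without giving any proof, so there is nothing to compare against; judged on its own, your argument is correct and complete. Part (1) is exactly the right computation: continuity is the projection onto the first coordinate being order-preserving, surjectivity uses that elements of $\U(X)$ are non-empty, and for the quotient condition the key lift of $(C',x')$ with $C'\subseteq C$ is $(C,x')$, which works because $x'\in C'\subseteq C$ and $(C',x')\leq (C,x')$ in $\hocolim\mathcal{I}_X$; openness of $q_X^{-1}(V)$ as a down-set then forces $C'\in V$. Part (2) correctly reduces to elements of the form $\widetilde D$ (using surjectivity of $\U(\iota_X)$), computes $\U(\rho_X)(\widetilde D)=D$ and $\U(q_X)(\widetilde D)=U^{\U(X)}_D=\psi(D)$, and closes the triangle with $\varphi(U^{\U(X)}_D)=D$. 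One small imprecision: you justify $U^{\U(X)}_D\in\U(\U(X))$ by saying it is a connected open subset of $\U(X)$, but that is not the membership criterion — one needs $U^{\U(X)}_D$ to be a connected component of $U^{\U(X)}_B$ for some set $B$ of maximal elements of $\U(X)$. This is true here (take $B=\{U_a\mid a\in\ub D\}$ and apply Lemma \ref{lemma_CC_of_UX}(b); it is also precisely the well-definedness of $\psi$ in Proposition \ref{prop_UUX_isomorphic_to_UX}), so the step stands, but the cited reason should be the lemma rather than connectedness and openness.
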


From \ref{prop_Xprime_ubpr} and \ref{prop_UX_iso_a_Xprime} we obtain that $\U(X)$ and $X$ are homotopy equivalent if every $C\in \U(X)$ has a maximum element. The following proposition provides a version of this result for weak equivalences. 

\begin{prop}
Let $X$ be a finite $T_0$--space. Suppose that every $C\in \U(X)$ is weakly contractible. Then there exists a weak homotopy equivalence $X\to \U(X)$.
\end{prop}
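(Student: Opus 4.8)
The plan is to use the non-Hausdorff homotopy colimit $\hocolim \mathcal{I}_X$ as an intermediary, exactly as in the strict-deformation-retract case. By Proposition~\ref{prop_X_dbpr_hocolim} we already know that $X$ is homotopy equivalent to $\hocolim \mathcal{I}_X$, so in particular there is a weak homotopy equivalence $X\to \hocolim \mathcal{I}_X$ (indeed $\rho_X$ and $\iota_X$ are mutually compatible homotopy equivalences). Since $\U(\hocolim \mathcal{I}_X)\cong \U(X)$ via $\U(\iota_X)$ (with inverse $\U(\rho_X)$), it suffices to produce a weak homotopy equivalence $\hocolim \mathcal{I}_X \to \U(\hocolim \mathcal{I}_X)$, or equivalently to show directly that $q_X\colon \hocolim \mathcal{I}_X \to \U(X)$ is a weak homotopy equivalence; composing with $X\to \hocolim \mathcal{I}_X$ then gives the desired map $X\to \U(X)$.

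First I would set up $q_X$ (which is already a quotient map by the preceding proposition) and analyse its fibers: for $C\in\U(X)$, the fiber $q_X^{-1}(C)$ is $\{C\}\times C$, which is homeomorphic to $C$, hence weakly contractible by hypothesis. More to the point, for each $C\in\U(X)$ the subspace $q_X^{-1}(U^{\U(X)}_C)$ should be shown to be weakly contractible. The key observation is that $q_X^{-1}(U^{\U(X)}_C) = \widetilde{C} = \{(D,x)\in\hocolim\mathcal{I}_X \mid D\subseteq C\}$ in the notation of the lemma above, and $\widetilde C$ deformation retracts (in the weak sense) onto $\{C\}\times C \cong C$: the map $(D,x)\mapsto (C,x)$ is continuous (it increases the first coordinate and fixes the second, using that the structure maps of $\mathcal{I}_X$ are inclusions), it is the identity on $\{C\}\times C$, and it is comparable with the identity of $\widetilde C$ since $(D,x)\le (C,x)$; hence $\widetilde C$ is homotopy equivalent to $C$, which is weakly contractible. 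This is the step I expect to carry the main weight, but it is essentially the content of the last lemma's proof of connectedness of $\widetilde D$, reused.

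Next I would invoke McCord's theorem (or the finite-space version: a map whose point-inverses of a basis-like open cover are weakly contractible and which respects the cover is a weak homotopy equivalence — precisely Quillen's Theorem A applied to finite spaces, as in \cite{barmak2011algebraic}). The open sets $U^{\U(X)}_C$ for $C\in\U(X)$ form a basis of $\U(X)$ (they are the minimal open sets), and we have just shown $q_X^{-1}(U^{\U(X)}_C)$ is weakly contractible for each such $C$; moreover $q_X$ is a quotient map so it is in particular continuous and the cover is honestly pulled back. Therefore $q_X$ is a weak homotopy equivalence. Finally, composing the weak equivalence $\iota_X\colon X\to \hocolim\mathcal{I}_X$ with $q_X$ yields a weak homotopy equivalence $X\to \U(X)$, and one checks this composite is the map $x\mapsto \CU(x)$, i.e.\ the map $f$ appearing in the earlier proposition on the topology generated by $\U(X)$.

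The main obstacle, I expect, is verifying the hypotheses of the McCord/Theorem~A criterion cleanly in the finite-space setting: one must be careful that it is the minimal open sets $U^{\U(X)}_C$ (not arbitrary open sets) whose preimages need to be weakly contractible, and that $\widetilde C = q_X^{-1}(U^{\U(X)}_C)$ rather than some larger set — this uses that if $D\subseteq C$ in $\U(X)$ then $q_X^{-1}$ of the minimal open set of $C$ is exactly the union of $\{D\}\times D$ over $D\le C$. Once the identification $q_X^{-1}(U^{\U(X)}_C)=\widetilde C$ and the weak contractibility of $\widetilde C$ are in hand, the rest is a routine application of the standard machinery.
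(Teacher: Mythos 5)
Your proposal is correct and follows the same overall route as the paper: both factor the desired map through $\hocolim \mathcal{I}_X$, using \ref{prop_X_dbpr_hocolim} to identify $X$ with $\hocolim \mathcal{I}_X$ up to homotopy, and then show that the projection onto $\U(X)$ is a weak homotopy equivalence. The only difference is how that last step is justified: the paper takes the constant functor $\mathcal{J}=\ast$ on $\U(X)$, notes that the unique natural transformation $\mathcal{I}_X\Rightarrow\mathcal{J}$ is an objectwise weak equivalence by hypothesis, and cites \cite[Corollary 2.6]{fernandez2016homotopy} to conclude that $\hocolim\mathcal{I}_X\to\hocolim\mathcal{J}\cong\U(X)$ (which is exactly your $q_X$) is a weak equivalence; you instead verify this directly with McCord's theorem, computing $q_X^{-1}(U^{\U(X)}_C)=\widetilde C$ and retracting $\widetilde C$ onto $\{C\}\times C\cong C$ via $(D,x)\mapsto(C,x)$, which indeed satisfies $s\geq\id_{\widetilde C}$, $s^2=s$, so $\widetilde C$ strongly deformation retracts onto a copy of $C$. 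Your version is more self-contained (it unpacks the citation into an explicit fiberwise check), at the cost of redoing an argument the cited corollary packages once and for all; the paper's version is shorter and makes the role of the hypothesis (objectwise weak contractibility) more transparent. Both are sound, and your closing observation that the composite $q_X\iota_X$ is $x\mapsto\CU(x)$ is a nice bonus not made explicit in the paper.
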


\begin{proof}
Let $\mathcal{J}\colon \U(X)\to \mathcal{P}$ be the constant functor with value $\ast$, the one-element poset. Let $\alpha\colon \mathcal{I}_X \Rightarrow \mathcal{J}$ be the unique natural transformation. It follows that $\alpha_C$ is a weak homotopy equivalence for every $C\in \U(X)$ and hence, from \cite[Corollary 2.6]{fernandez2016homotopy}, we obtain that there exists a weak homotopy equivalence $\eta\colon \hocolim \mathcal{I}_X \to \hocolim \mathcal{J}$. Clearly $\hocolim \mathcal{J}\cong \U(X)$. Besides, by \ref{prop_X_dbpr_hocolim}, $X$ is homotopy equivalent to $\hocolim \mathcal{I}_X$. The result follows.
\end{proof}

\bibliographystyle{acm}
\bibliography{ref_C_constr}

\end{document}